\documentclass[reqno,11pt]{amsart}
\usepackage{amsfonts}
\usepackage{a4wide}
\usepackage{color}
\usepackage{mathrsfs}
\usepackage{mathtools}
\usepackage{amsmath}
\usepackage{amssymb}
\usepackage{bbm}
\usepackage{esint}
\usepackage{nicefrac}
\numberwithin{equation}{section}
\usepackage[colorlinks,citecolor=green,linkcolor=red]{hyperref}

\usepackage[latin1]{inputenc}
\input xy
\xyoption{all}

\newtheorem{theorem}{Theorem}[section]
\newtheorem{Ca}[theorem]{Corollary}
\newtheorem{Th}[theorem]{Theorem}
\newtheorem{Lm}[theorem]{Lemma}
\newtheorem{Prop}[theorem]{Proposition}
\newtheorem{Def}[theorem]{Definition}

\newtheorem{Remark}[theorem]{Remark}
\newtheorem{Problem}[theorem]{Problem}

\title{A New insight into the Ambrosio--Reshetnyak approach to Sobolev maps}
\author{Alexander I. Tyulenev}
\address{Steklov Mathematical Institute of Russian academy of Sciences}
\email{tyulenev-math@yandex.ru,tyulenev@mi-ras.ru}
\begin{document}
\date{\today}
\allowdisplaybreaks
\keywords{Metric spaces, Sobolev maps, Lipschitz functions}
\subjclass[2010]{53C23, 46E35}
\begin{abstract}
Given $p \in (1,\infty)$, let $\operatorname{X}=(\operatorname{X},\rho,\mu)$ be a metric measure space such that the measure
$\mu$ is  uniformly locally doubling and $\operatorname{X}$ supports a weak local $(1,p)$-Poincar\'e inequality.
Let $(\operatorname{Y},\operatorname{d},\underline{y})$ be a complete pointed metric space. We prove that the equivalence class of
a Borel map $ u:\operatorname{X} \to \operatorname{Y}$ (modulo coincidence $\mu$-a.e.) belongs to the Ambrosio--Reshetnyak--Sobolev class $W_{p}^{1}(\operatorname{X},\operatorname{Y})$ if and only if $h \circ  u$ belongs to the Sobolev space $W_{p}^{1}(\operatorname{X})$ for every $1$-Lipschitz function $h:\operatorname{Y} \to \mathbb{R}$.
\end{abstract}
\maketitle
\tableofcontents
%
%

%
\section{Introduction}
\noindent

In his groundbreaking paper \cite{Amb} L.~Ambrosio built foundations of the theory of maps of bounded variation acting from a domain $\Omega \subset \mathbb{R}^{n}$ to a locally
compact metric space. A little bit later similar ideas
have been used by Yu.~G.~Reshetnyak in \cite{Re}, where the theory of Sobolev maps acting from a domain $\Omega \subset \mathbb{R}^{n}$ to an arbitrary complete metric space was developed.
However, it should be noted that probably the
first definition (different from that of used by Reshetnyak) of Sobolev metric-valued maps was given by N.~Korevaar and R.~Sch\"oen
in the famous paper \cite{KS}.

In \cite{GT} N.~Gigli and the author combined ideas from \cite{Amb,Re,KS} and introduced several approaches to the definitions of Sobolev maps in the fully nonsmooth setting. More precisely, we considered maps $ u: \operatorname{X} \to \operatorname{Y}$ assuming that $\operatorname{X}:=(\operatorname{X},\rho,\mu)$ is a metric measure space such that the measure $\mu$ is uniformly locally doubling and
$\operatorname{X}$ supports the corresponding Poincar\'e inequality, whereas $\operatorname{Y}:=(\operatorname{Y},\operatorname{d},\underline{y})$ is an arbitrary complete pointed metric space.
Under those assumptions on $\operatorname{X}$ and $\operatorname{Y}$
the equivalence between Korevaar--Sch\"oen's and Ambrosio--Reshetnyak's approaches was established in \cite{GT}.

The \textit{main goal} of the present paper is to deepen methods from \cite{GT}, combine them with new ``infinitesimal ideas'', and find a new simplified and  - in some sense - unexpected
version of the Ambrosio--Reshetnyak's approach to Sobolev maps under the same assumptions on $\operatorname{X}$ and $\operatorname{Y}$. This  gives an ``almost final'' step in the program initiated by R.~D.~Oleinik and the author \cite{OT}, where the similar breakthrough was made
in the particular case when $\operatorname{X}$ is the unit segment $[0,1]$ equipped with the Euclidean distance and the one-dimensional Lebesgue measure.

\subsection{Statement of the problem}
We briefly fix some terminology and refer the reader to Section 2 of this paper for the detailed notation and definitions.
We should warn the reader that, unless otherwise stated, given a metric measure space $\operatorname{X}=(\operatorname{X},\rho,\mu)$ and a pointed metric space $\operatorname{Y}=(\operatorname{Y},\operatorname{d},\underline{y})$, we  \textit{identify Borel maps} $ u:\operatorname{X} \to \operatorname{Y}$
with their \textit{equivalence classes} modulo coincidence $\mu$-almost everywhere on $\operatorname{X}$. In this case all inequalities and equalities containing
$ u$ should be interpreted for its arbitrary representative in the $\mu$-almost everywhere sense.

Let $\operatorname{X}:=(\operatorname{X},\rho,\mu)$ be a \textit{metric measure space} and let $\operatorname{Y}:=(\operatorname{Y},\operatorname{d},\underline{y})$ be a pointed \textit{complete metric space}.
Given $p \in [1,\infty)$, we say that a map $ u \in L_{p}(\operatorname{X},\operatorname{Y})$ belongs to the \textit{Ambrosio--Reshetnyak--Sobolev space} $W_{p}^{1}(\operatorname{X},\operatorname{Y})$ provided there is a nonnegative $G \in L_{p}(\operatorname{X})$ such that for each 1-Lipschitz function $h:\operatorname{Y} \to \mathbb{R}$ it holds that $h \circ  u \in W_{p}^{1}(\operatorname{X})$ and
\begin{equation}
\label{eqq.Amb_Resh_majorant}
|D(h\circ u)|_{p}(x) \le G(x) \quad \hbox{for $\mu$-a.e. $x \in \operatorname{X}$},
\end{equation}
where $|D(h\circ u)|_{p}$ is the minimal $p$-weak upper gradient of $h \circ u$.
Notice that there exists the minimal $G$ in the $\mu$-a.e. sense for which $\eqref{eqq.Amb_Resh_majorant}$ holds. We call it \textit{the minimal
$p$-weak upper gradient} of $u$ and denote it by $|Du|_{p}$. The index $p$ in our notation stresses the presence of dependence of
the $p$-minimal weak upper gradients on $p$ (see Remark \ref{Rem.dependence_on_p} for details). However, if $\operatorname{X}$ is a domain in a finite dimensional
Euclidean space, then one can show that $|Du|_{p}=|Du|_{q}$ in the $\mu$-a.e. sense for each $p,q \in (1,\infty)$ and any $u \in W_{p}^{1}(\operatorname{X},\operatorname{Y}) \cap W_{q}^{1}(\operatorname{X},\operatorname{Y})$.
In this case we will write $|Du|$ instead of $|Du|_{p}$.

The crucial point of the Ambrosio--Reshetnyak approach is the \textit{a priori assumption} concerning the existence of the majorant $G$ in \eqref{eqq.Amb_Resh_majorant}.
On the one hand, the requirement is quite natural. Indeed, in the smooth setting, i.e. when $\operatorname{X}$ is a finite-dimensional Euclidean space equipped with
the usual Lebesgue measure, $\operatorname{Y}$ is a finite dimensional Euclidean space, and $ u:\operatorname{X} \to \operatorname{Y}$ is smooth, the minimal weak upper gradient $|D u|$
is nothing but the operator norm of the classical differential of $ u$. On the other hand, in the smooth setting the fact of the presence of
a ``majorant'' $G$ follows automatically if one has at a disposal only qualitative information about the Sobolev regularity of compositions of the map with Lipschitz functions.
Moreover, one can extract the Sobolev regularity of a given map from its compositions with coordinate projections only. Since in the ``nonsmooth world''
typically there are no transparent ways to construct such majorants, the main question we are aimed to attack
in our paper can be informally written as follows: \textit{``Is the requirement \eqref{eqq.Amb_Resh_majorant} really necessary}?'' More precisely, we pose the following problem.
\begin{Problem}
\label{Problem.Oleinik}
Given $p \in [1,\infty)$, assume that $h \circ  u \in W_{p}^{1}(\operatorname{X})$ for every 1-Lipschitz functions $h:\operatorname{Y} \to \mathbb{R}$.
Is it true that $u \in W_{p}^{1}(\operatorname{X},\operatorname{Y})$?
\end{Problem}
This question is far from being elementary.
Indeed, as it will be clear during the rest of the paper, due to the lack of any smooth structure on the space $\operatorname{Y}$,
we should work with the whole family of $1$-Lipschitz functions on $\operatorname{Y}$, which is
typically uncountable, and there is no chance to fix some finite or even countable family of $1$-Lipschitz functions which could be used for the role of ``test functions''.

\subsection{Previous results} Throughout this subsection, given $n \in \mathbb{N}$, by $\mathcal{L}^{n}$ we denote the classical $n$-dimensional Lebesgue measure on $\mathbb{R}^{n}$.

The \textit{first step} tovards to Problem \ref{Problem.Oleinik} is presented in the joint paper of R.~D.~Oleinik and the author
\cite{OT} where the case $\operatorname{X}=([0,1],|\cdot|,\mathcal{L}^{1}\lfloor_{[0,1]})$ was completely studied.
More precisely, combining the main results in \cite{OT} we have the following result.
\begin{Prop}
\label{Prop.OT_2}
Let $\operatorname{Y}=(\operatorname{Y},\operatorname{d})$ be a complete separable metric space. Given $p \in (1,\infty)$,
a map $ u:[0,1] \to \operatorname{Y}$ belongs to the Ambrosio--Reshetnyak--Sobolev space $W_{p}^{1}([0,1],\operatorname{Y})$
if and only if, for each $1$-Lipschitz function $h:\operatorname{Y} \to \mathbb{R}$, the composition
$h \circ  u$ belongs to the Sobolev space $W_{p}^{1}([0,1])$. Furthermore, there exists a $1$-Lipschitz
function $\underline{h}:\operatorname{Y} \to \mathbb{R}$ such that
\begin{equation}
\label{eqq.OT_equality}
|D(\underline{h} \circ  u)|(x) = |D u|(x) \quad \hbox{for $\mathcal{L}^{1}$-a.e. $x \in [0,1]$}.
\end{equation}
\end{Prop}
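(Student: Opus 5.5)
The forward implication is immediate: if $u \in W_{p}^{1}([0,1],\operatorname{Y})$, then by definition $h\circ u \in W_p^1([0,1])$ for every $1$-Lipschitz $h$. So the content of Proposition \ref{Prop.OT_2} is the converse, together with the existence of $\underline{h}$ realizing \eqref{eqq.OT_equality}. My plan is to pass to a good representative of $u$, build a countable family of $1$-Lipschitz test functions, and reduce the problem to producing an $L_p$ majorant $G$ for all compositions at once.

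\textbf{Representatives and the family $\{h_n\}$.} Since $\operatorname{Y}$ is separable, fix a countable dense set $\{y_n\}\subset u([0,1]\setminus N)$, where $N$ is a null set. Put $h_n:=\operatorname{d}(\cdot,y_n)$. Each $h_n\circ u\in W_p^1([0,1])$, so it has an absolutely continuous representative, and its derivative $g_n:=|(h_n\circ u)'|$ lies in $L_p$. Using countably many Lebesgue points, I would pass to a representative of $u$ that is continuous. The distance identity $\operatorname{d}(u(s),u(t))=\sup_n|h_n(u(s))-h_n(u(t))|$ then holds for all $s,t$ outside a null set. Set $G:=\sup_n g_n$. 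If $G\in L_p$, the argument closes quickly. For every $1$-Lipschitz $h$,
\[
|h(u(t))-h(u(s))|\le \operatorname{d}(u(s),u(t))\le \int_s^t G,
\]
where the second inequality comes from differentiating each $h_n\circ u$ and taking the supremum of the difference quotients. Hence $h\circ u$ is absolutely continuous with $|(h\circ u)'|\le G$, and so $u\in W_p^1$ with $|Du|\le G$. Conversely, $g_n\le|Du|$ a.e. for each $n$, so $|Du|=\sup_n g_n$, which is the metric derivative of $u$.

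\textbf{Main obstacle: $\sup_n g_n \in L_p$.} The hypothesis is purely qualitative, so nothing a priori prevents $\sup_n g_n$ from failing to be $p$-integrable. My first attempt would be a gluing argument by contradiction. Partition $[0,1]$ into the Borel sets $E_n:=\{x: g_n(x)\ge \tfrac12\sup_k g_k(x)\}$, after making them disjoint. One would like the function ``$h_n$ on the $u$-image of $E_n$'' to be a single $1$-Lipschitz function. That is not literally possible, but a McShane-type construction comes close:
\[
\underline{h}(y):=\inf_n\bigl(c_n+\operatorname{d}(y,y_n)\bigr)\quad\text{or}\quad \underline{h}=\sum_n \varepsilon_n\,\sigma_n h_n ,
\]
with signs $\sigma_n$ and weights $\varepsilon_n$. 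The choice would be made by a Baire category or measure argument, so that $|(\underline{h}\circ u)'|\ge c\sup_n g_n$ on a set of positive measure where $\sup_n g_n$ is large. If $\sup_n g_n\notin L_p$, this would contradict $\underline h\circ u\in W_p^1$.

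\textbf{Obtaining $\underline h$.} More concretely, I would work on the space $\mathrm{Lip}_1(\operatorname{Y})$ of $1$-Lipschitz functions vanishing at a base point, with the topology of pointwise convergence on $\{y_n\}$; this space is compact metrizable. The sets $F_M:=\{h:\|(h\circ u)'\|_{L_p}\le M\}$ are closed by lower semicontinuity of the $W^1_p$ seminorm under pointwise convergence. They cover the whole space by hypothesis, so by Baire some $F_M$ contains a relatively open set. Translating and rescaling that open set shows $\sup_{h}\|(h\circ u)'\|_p<\infty$ over all $1$-Lipschitz $h$. Finally, I would show that the essential supremum of the lattice $\{|(h\circ u)'|\}$ is attained by a countable supremum, and use finite max/min combinations of the $h_n$, which remain $1$-Lipschitz and whose derivatives are a.e. one of the $g_n$, to get uniform bounds on $\|\max_{n\le k} g_n\|_p$. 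Monotone convergence then gives $G\in L_p$, and a limit of such combinations yields $\underline h$ with equality \eqref{eqq.OT_equality}. The delicate point is the Baire step: exploiting that the space is not convex under max/min while the $L_p$-bound must propagate from an open set to everything.
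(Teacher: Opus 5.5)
You have the easy parts right. The forward implication is trivial. If $G=\sup_n g_n\in L_p$, then $\operatorname{d}(u(s),u(t))\le\int_s^t G$, and everything follows. The Baire step also closes without difficulty: if a basic neighbourhood $V$ of $h_0$ lies in $F_M$, then $tg+(1-t)h_0\in V$ for every $1$-Lipschitz $g$ and a fixed small $t>0$. Hence $M_0:=\sup_h\|(h\circ u)'\|_{L_p}\le 2M/t$.

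The genuine gap is the next step, which you treat as routine but which is the whole content of the proposition: passing from $M_0<\infty$ to $\sup_n g_n\in L_p$. For a max--min combination of translates $h_n+c_n$, the index active at time $t$ is determined by the position $u(t)$, that is, by a partition of $\operatorname{Y}$ into regions. The index maximizing $g_n(t)$, however, depends on the direction in which $u$ moves at time $t$. A curve may cross the same region, even the same point, many times in different directions. So knowing that $|(h\circ u)'|$ is a.e. one of the $g_n$ does not give $|(h\circ u)'|\ge c\max_{n\le k}g_n$ on most of $[0,1]$. Without such a selection mechanism, the bounds $\|g_n\|_{L_p}\le M_0$ give no control of $\|\sup_n g_n\|_{L_p}$; think of spikes of height $n$ on disjoint sets of measure $n^{-p}$. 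Your last sentence has the same problem. A pointwise limit of such $h$'s only gives weak convergence of $(h\circ u)'$, which allows cancellation. You therefore get upper but never lower bounds for $|(\underline h\circ u)'|$, and the exact equality \eqref{eqq.OT_equality} cannot come out of it.

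What is missing is an actual construction of one Lipschitz function that captures the metric speed of $u$. The paper does not prove this proposition itself; it quotes it from the earlier work of Oleinik and the author and names Proposition \ref{Prop.OT_1} as the key tool. Once $u$ has a continuous representative, every $h\circ u$ lies in $BV\cap C$, so $u\in BV\cap C$. Proposition \ref{Prop.OT_1} then gives a single $\underline h\in\operatorname{LIP}_1(\operatorname{Y})$ with $V_{\underline h\circ u}([c,d])=V_u([c,d])$ for all segments. Since $\underline h\circ u$ is absolutely continuous, so is $u$, with metric speed $|(\underline h\circ u)'|\in L_p$. This yields $u\in W_p^1([0,1],\operatorname{Y})$ and \eqref{eqq.OT_equality} at once. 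That construction exploits the order structure of $[0,1]$. In higher dimensions the paper's substitute is Theorem \ref{Th.main_first} combined with Proposition \ref{Prop.Banach_Steinhaus}, and it only gives domination up to constants.

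Your continuity step also cannot rely on the functions $\operatorname{d}(\cdot,y_n)$ alone. In $\ell_2$, let $u$ run on each $[2^{-k-1},2^{-k}]$ along the quarter circle from $e_{k+1}$ to $e_k$. Then $\operatorname{d}(u(\cdot),y)$ is Lipschitz for every $y$ in the image, yet $u$ has no continuous representative. Continuity does hold under the full hypothesis, but you must test with other $1$-Lipschitz functions. For example, take $\sup_j(\varepsilon/3-\operatorname{d}(\cdot,u(t_{2j})))_+$ for an $\varepsilon$-separated sequence $u(t_j)$ with $t_j\to t_0$.
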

In the context of $p$-absolutely continuous mappings the similar result was proved in \cite{OT} in the full range $p \in [1,\infty]$ without any assumptions on $\operatorname{Y}$.
Independently and almost simultaneously using quite elementary but beautiful geometric ideas V.~I.~Bakhtin proved \cite{Bah} a slightly weaker result in the particular case $p=1$.

The key ingredient for the proof of Proposition \ref{Prop.OT_2} is the characterization of the continuous metric curves of bounded variation in terms of Lipschitz post-compositions.
Given a metric space $\operatorname{Y}=(\operatorname{Y},\operatorname{d})$, the symbol $BV([0,1],\operatorname{Y})$ means the space of all maps $ u:[0,1] \to \operatorname{Y}$ of bounded variation.
Once $\operatorname{Y}$ is the usual real line, we will omit it from the corresponding notation.
\begin{Prop}
\label{Prop.OT_1}
Let $\operatorname{Y}=(\operatorname{Y},\operatorname{d})$ be an arbitrary metric space. A map $ u:[0,1] \to \operatorname{Y}$ belongs to the space $BV([0,1],\operatorname{Y}) \cap C([0,1],\operatorname{Y})$
if and only if, for each $1$-Lipschitz function $h:\operatorname{Y} \to \mathbb{R}$, the composition $h \circ  u$ belongs to the space $BV([0,1]) \cap C([0,1])$. Moreover, there is a $1$-Lipschitz function $\underline{h}:\operatorname{X} \to \mathbb{R}$
such that for any segment $[c,d] \subset [0,1]$ the total variation $V_{ u}([c,d])$ of the map $ u$ on $[c,d]$ coincides with the total variation $V_{\underline{h} \circ  u}([c,d])$ of the function
$\underline{h} \circ  u$ on $[c,d]$.
\end{Prop}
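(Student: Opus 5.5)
The plan is to prove the two implications separately, and to get sufficiency as a consequence of the stronger ``moreover'' statement. \emph{Necessity} is immediate. If $h$ is $1$-Lipschitz, then for every partition $c=t_0<\dots<t_m=d$ we have $\sum_i|h(u(t_{i+1}))-h(u(t_i))|\le \sum_i \operatorname{d}(u(t_{i+1}),u(t_i))$. Hence $V_{h\circ u}([c,d])\le V_u([c,d])$, and $h\circ u$ is continuous whenever $u$ is.

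\emph{Reduction to a compact target.} For sufficiency, assume $h\circ u\in BV([0,1])\cap C([0,1])$ for every $1$-Lipschitz $h$. Taking $h=\operatorname{d}(\cdot,u(t_0))$ and varying $t_0$ shows that $u$ is continuous. So $K:=u([0,1])$ is compact, and every $1$-Lipschitz function on $K$ extends (McShane) to a $1$-Lipschitz function on $\operatorname{Y}$. It therefore suffices to work in the space $\mathcal{L}$ of $1$-Lipschitz functions $h:K\to\mathbb{R}$ with $h(u(0))=0$. By Arzel\`a--Ascoli, $\mathcal{L}$ is a compact, hence complete, metric space in the sup norm. I will produce a single $\underline h\in\mathcal{L}$ with $V_{\underline h\circ u}([c,d])\ge V_u([c,d])$ for all $[c,d]$, \emph{allowing the value $+\infty$}. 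Combined with the hypothesis, this forces $V_u([0,1])<\infty$, which gives sufficiency, and together with necessity it gives the claimed equality.

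\emph{Baire argument.} For a rational segment $I\subset[0,1]$ and a rational $N<V_u(I)$, set $G_{I,N}:=\{h\in\mathcal{L}: V_{h\circ u}(I)>N\}$.
\begin{itemize}
\item $G_{I,N}$ is open. Total variation over a fixed finite partition depends continuously on $h$ in the sup norm, and $V_{h\circ u}(I)$ is the supremum over partitions, hence lower semicontinuous in $h$.
\item If every $G_{I,N}$ is dense, Baire's theorem gives $\underline h\in\bigcap_{I,N}G_{I,N}$. Then $V_{\underline h\circ u}(I)\ge V_u(I)$ for all rational $I$.
\item Passing to arbitrary $[c,d]$ is done by approximating from inside by rational segments. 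This uses the continuity of $u$: for continuous curves, $V_u([c,d])=\lim V_u([c',d'])$ as $[c',d']\uparrow(c,d)$, and the same holds for $\underline h\circ u$ by monotonicity.
\end{itemize}

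\emph{Density, the main obstacle.} Given $h\in\mathcal{L}$ and $\varepsilon>0$, I will look for $h'=(1-\delta)h+\delta g$ with $g\in\mathcal{L}$ and small $\delta$. Then $\|h'-h\|_\infty\le 2\delta\operatorname{diam}K$, and
\[
V_{h'\circ u}(I)\ge \delta V_{g\circ u}(I)-(1-\delta)V_{h\circ u}(I).
\]
(If $V_{h\circ u}(I)=\infty$ already, then $h\in G_{I,N}$.) So density reduces to the key lemma
\[
\sup_{g\in\mathcal{L}}V_{g\circ u}(I)=V_u(I)\quad\text{(possibly $+\infty$)},
\]
and it is enough to find a $g$ realizing any prescribed finite fraction of a partition sum.

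A single distance function $\operatorname{d}(\cdot,y_i)$ only captures one increment $\operatorname{d}(y_i,y_{i+1})$ of a partition $y_i=u(t_i)$. What I will try first is to exploit continuity of $u$ and pass to a sub-selection of the partition. The aim is to extract disjoint time intervals $J_k$ whose images $u(J_k)$ are mutually far apart compared with the increments they carry, and whose increments still sum to at least a fixed fraction of $V_u(I)$. On such a family, one can glue the local functions $\operatorname{d}(\cdot,u(\min J_k))$, truncated to small neighbourhoods of $u(J_k)$, into one $1$-Lipschitz $g$ by taking a max of min-type envelopes.

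The delicate point is that the curve may revisit the same region infinitely often. If this separation fails, I will instead iterate the Baire argument on subintervals, using that $V_u(I)=\infty$ forces infinite variation on arbitrarily short subintervals near some point. This localizes the problem to a single small ball, where one distance function suffices at each scale.
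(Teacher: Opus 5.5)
The paper only quotes this statement from \cite{OT}, so your proposal has to stand on its own. It has a genuine gap: the density step fails, and the key lemma is not proved.

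\textbf{The density step.} Necessity, continuity of $u$, the reduction to the compact space $\mathcal{L}$, openness of $G_{I,N}$, and the passage from rational to arbitrary segments are all fine. The problem is the convex-combination perturbation. It cannot give density of $G_{I,N}$ when $V_u(I)<\infty$, which is exactly the case that matters for the ``moreover'' part. Every $g\in\mathcal{L}$ is $1$-Lipschitz, so $V_{g\circ u}(I)\le V_u(I)$. Hence your lower bound is at most $\delta V_u(I)$, and in fact
\[
V_{h'\circ u}(I)\le (1-\delta)V_{h\circ u}(I)+\delta V_u(I).
\]
Take $h\equiv 0$. Then $h'=\delta g$ with $\delta\|g\|_\infty<\varepsilon$, and $V_{h'\circ u}(I)=\delta V_{g\circ u}(I)$. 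So density of $G_{I,N}$ at $0$ says exactly this: some $1$-Lipschitz function of amplitude $<\varepsilon$ captures more than $N$ of the variation, which would require a sawtooth-type construction along the curve. At a general $h$ you would further need a nonlinear way to superpose such oscillations on $h$ while keeping Lipschitz constant $1$. On $\mathbb{R}$ one zigzags with slopes $\pm1$ inside the band $[h-\varepsilon,h+\varepsilon]$. Neither follows from $\sup_g V_{g\circ u}(I)=V_u(I)$, which gives no amplitude control. So the Baire argument does not produce $\underline h$.

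What survives, if you restrict to $I=[0,1]$, is a correct Baire/Banach--Steinhaus proof of \emph{sufficiency}. This is the same mechanism as Proposition~\ref{Prop.Banach_Steinhaus} in the proof of Theorem~\ref{Th.main_second}. There the convex-combination trick does work, because you only need $\sup_g V_{g\circ u}([0,1])=\infty$ when $V_u([0,1])=\infty$. Even a fixed fraction $c\,V_u$ would suffice, which is the analogue of Corollary~\ref{Ca.main}.

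\textbf{The key lemma.} Even in the infinite case, that lemma is the real content of the proposition, and you do not prove it. The extraction of disjoint time intervals with well-separated images is only announced, and you note yourself that it fails when the curve revisits regions. The fallback is not justified:
\begin{itemize}
\item Localizing near a point of infinite variation does not remove revisits, since the curve can return to the small ball infinitely often along different directions.
\item You would still have to glue infinitely many local or scale-wise distance functions into a single $1$-Lipschitz function, and that gluing is precisely the difficulty.
\item You do not explain how continuity of $u$, beyond compactness of the image, defeats this obstruction. It must, because for discontinuous maps no analogous characterization is available (cf.~\cite{ST}).
\end{itemize}
As it stands you have a sound abstract framework. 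What is missing is the quantitative one-function lemma needed for sufficiency, and a small-amplitude perturbation lemma (or a direct construction) needed for $V_u([c,d])=V_{\underline h\circ u}([c,d])$ on all segments.
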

It turns out that the continuity assumption is crucial for the possibility of the characterization of $BV$-maps via post-compositions with Lipschitz functions.
It was shown in \cite{ST} (see also \cite{ST2} for the short announcement and \cite{S} for the further development) that there are many geometric obstacles of different nature that
do not allow to obtain any reasonable analog of Proposition \ref{Prop.OT_1} for generic metric spaces $(\operatorname{Y},\operatorname{d})$ and discontinuous maps
$ u:[0,1] \to \operatorname{Y}$.

The \textit{second progress} was achieved in \cite{PavVod} where Problem \ref{Problem.Oleinik} was solved under the two additional assumptions:
a1) $\operatorname{X}$ is a domain in some Carnot group; a2) $u$ is a \textit{homeomorphism.}

The \textit{third step} towards a solution to Problem \ref{Problem.Oleinik} has been taken very recently by S.~V.~Pavlov \cite{Pav} who proved the following remarkable result.
\begin{Prop}
\label{Prop.Pavlov_2}
Let $\operatorname{Y}=(\operatorname{Y},\operatorname{d})$ be an arbitrary metric space. Let $\Omega \subset \mathbb{R}^{n}$, $n=1,2$ be an arbitrary open nonempty set.
Given $p \in (1,\infty)$,
a map $ u:\Omega \to \operatorname{Y}$ belongs to the Ambrosio--Reshetnyak--Sobolev space $W_{p}^{1}(\Omega,\operatorname{Y})$
if and only if, for each $1$-Lipschitz function $h:\operatorname{Y} \to \mathbb{R}$, the composition $h \circ  u$ belongs to $W_{p}^{1}(\Omega)$.

If, in addition $\mathcal{L}^{n}(\Omega) < +\infty$, then, for each $\epsilon > 0$ and each $c > 12^{n-1}$,
there is a compact $K_{\epsilon} \subset \Omega$ and a 1-Lipschitz map $\underline{h}:\operatorname{Y} \to \mathbb{R}^{n}$
such that $\mathcal{L}^{n}(\Omega \setminus K_{\epsilon,c}) < \epsilon$ and
\begin{equation}
\label{eqq.Pavlov_estimate}
|D u|(x) \le c |D(\underline{h}\circ u)|(x) \quad \hbox{for $\mu$-a.e. $x \in K_{\epsilon,c}$}.
\end{equation}
\end{Prop}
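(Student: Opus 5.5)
We would prove only the nontrivial implication: if $h\circ u\in W^1_p(\Omega)$ for every $1$-Lipschitz $h$, then a majorant $G\in L_p(\Omega)$ in \eqref{eqq.Amb_Resh_majorant} exists. The converse is immediate from the definition. First we reduce to a separable target, since a Borel map in $L_p$ is essentially separably valued. So fix a countable dense set $\{y_i\}$ in the essential image and normalize all test functions by $h(\underline{y})=0$. Let $\mathcal{K}$ be the set of such $1$-Lipschitz functions with the topology of pointwise convergence on $\{y_i\}$. Since a $1$-Lipschitz function is determined by its values on a dense set, $\mathcal{K}$ is compact and metrizable by Tychonoff's theorem.

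\textbf{Step 1: uniform norm bound via Baire.} For $M\in\mathbb{N}$ put $F_M=\{h\in\mathcal{K}: \|h\circ u\|_{W^1_p}\le M\}$. Each $F_M$ is closed. Indeed, if $h_j\to h$ in $\mathcal{K}$, then $h_j\circ u\to h\circ u$ $\mu$-a.e. with domination by $\operatorname{d}(u,\underline{y})\in L_p$. Lower semicontinuity of the Sobolev norm under $L_p$-convergence then applies. By hypothesis $\bigcup_M F_M=\mathcal{K}$, so by Baire some $F_M$ contains a basic neighbourhood $\{h:|h(y_i)-h_0(y_i)|<\varepsilon,\ i\le N\}$. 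For an arbitrary $h\in\mathcal{K}$ the convex combination $(h_0+th)/(1+t)$ is again in $\mathcal{K}$. Because $|h(y_i)|\le \operatorname{d}(y_i,\underline{y})$, it lies in that neighbourhood for a fixed small $t>0$. Hence
\begin{equation*}
\sup_{h\in\mathcal{K}}\|h\circ u\|_{W^1_p(\Omega)}\le C<\infty .
\end{equation*}

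\textbf{Step 2: from a uniform norm bound to a pointwise majorant.} This is the main obstacle. Set $h_k=\operatorname{d}(\cdot,y_k)-\operatorname{d}(\underline{y},y_k)$ and take as candidate $G=\sup_k|D(h_k\circ u)|$. Via the Kuratowski embedding this is the natural candidate for $|Du|$. Step 1 bounds each $\int|D(h_k\circ u)|^p$, but not $\int\sup_k$. To control the supremum we need to glue: given finitely many $h_1,\dots,h_m$ and a partition of $\Omega$ into sets $E_k$ where $h_k$ nearly realizes the supremum, produce a single $1$-Lipschitz $\underline h$ with $|D(\underline h\circ u)|\gtrsim|D(h_k\circ u)|$ on most of $E_k$. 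Step 1 then gives $\int G^p\le c\,C^p$ by monotone convergence.

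For $n=1$ we would perform the gluing along intervals. By Proposition \ref{Prop.OT_2} on each subinterval there is a single function realizing $|Du|$. Its proof through Proposition \ref{Prop.OT_1}, using variation on finitely many disjoint intervals, lets one combine the functions by max/min and McShane extension without losing more than $\varepsilon$ of variation. For $n=2$ we would use the ACL characterization: a.e. horizontal and vertical line restriction of $u$ is a $1$-dimensional Sobolev curve, to which the case $n=1$ applies. The metric partial derivatives $|\partial_1u|,|\partial_2u|$ are then controlled by a glued function. The gluing must be performed on a grid of squares, and a Vitali/Besicovitch-type covering with bounded overlap is needed so that the pieces coming from neighbouring squares do not cancel. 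This bounded-overlap count is where we expect the dimensional constant $12^{n-1}$ in \eqref{eqq.Pavlov_estimate} to arise.

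\textbf{Step 3: the quantitative statement.} Finally $|Du|\le\sqrt2\max(|\partial_1u|,|\partial_2u|)$ a.e. For the estimate \eqref{eqq.Pavlov_estimate} with an $\mathbb{R}^n$-valued $\underline h$ we would use one glued function per coordinate direction. Egorov and Lusin then give a compact $K_{\varepsilon,c}$ on which finitely many $h_k$ nearly realize the supremum uniformly. The construction of Step 2 makes the loss factor any $c>12^{n-1}$.
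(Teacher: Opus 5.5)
The paper does not prove this proposition. It is Pavlov's theorem, quoted in the introduction, and the estimate with $c>12^{n-1}$ and an $\mathbb{R}^{n}$-valued $\underline{h}$ is not addressed there; the paper's quantitative Theorem \ref{Th.main_first} is of a different, maximal-function type. So the relevant comparison is with the paper's route to the qualitative half, Theorem \ref{Th.main_second}. Your Step 1 is correct and plays the role of Proposition \ref{Prop.Banach_Steinhaus}. One small fix: identify each $h$ with its restriction to the closure of the essential range, since on all of $\operatorname{Y}$ it is not determined by its values on a dense subset of the image. Passing to finitely many $h_{1},\dots,h_{m}$ is also sound. It is best phrased through $v_{m}:=(h_{1},\dots,h_{m})\circ u$, which is a genuine Sobolev map into $\mathbb{R}^{m}_{\infty}$ with $|Dv_{m}|=G_{m}:=\max_{k\le m}|D(h_{k}\circ u)|$.

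\textbf{The gap is Step 2.} It carries the whole difficulty, and for $n=2$ it is stated only as an intention. You need a single $\underline{h}\in\operatorname{LIP}_{1}(\operatorname{Y})$ with $\int|D(\underline{h}\circ u)|^{p}\ge c^{-1}\int G_{m}^{p}$, where $c$ is independent of $m$ and of $\operatorname{Y}$. The ACL reduction only hands you separate one-dimensional functions, one per line or square, and these must be merged into one function on $\operatorname{Y}$. The obstruction is that the images of different lines and squares can overlap arbitrarily in $\operatorname{Y}$. A bounded-overlap covering of $\Omega$ gives no control over that, and $\operatorname{Y}$ has no doubling structure in which to run a covering. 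So nothing in your sketch prevents the cancellation you worry about, and the constant $12^{n-1}$ is asserted rather than derived. The paper notes that Pavlov's proof rests on delicate Sard-type arguments.

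There are two smaller points. First, saying that a.e.\ line restriction of $u$ itself is a Sobolev curve is circular. The null set of bad lines depends on $h$, and there are uncountably many $h$. The claim is fine for $v_{m}$. Second, for $n=1$, merging by max/min is unjustified for the same overlap reason: where the images of two intervals meet, the maximum of the two functions built for them need not follow either one. Instead, concatenate the compact pieces into one rectifiable curve, joining them by segments after embedding $\operatorname{Y}$ into $l_{\infty}$, and apply Proposition \ref{Prop.OT_1} once.

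The paper solves exactly your Step 2, for $\operatorname{X}\in\mathcal{A}_{p}$ in every dimension, in Theorem \ref{Th.main_first}, used in integrated form as Corollary \ref{Ca.main}. It does so by working in the target, using three ingredients:
Lusin--Lipschitz pieces;
the comparison of $\operatorname{Lip}$ with energy densities (Theorem \ref{Th.energy_density});
asymptotic doubling of push-forwards of $|Du|_{p}$-weighted pieces of the level sets (Proposition \ref{Prop.Reduction_to_Vitali}).
Together these permit a $5B$-covering by balls of $\operatorname{Y}$ whose preimages catch a fixed fraction of each relevant level set. The function $\underline{h}_{k,r}$ is then a sum of bumps on the disjoint balls.

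Applied to $v_{m}$ together with your Step 1, this gives $\|G_{m}\|_{L_{p}}\le C_{3}C$. Monotone convergence then yields the qualitative part, via the standard fact that once $\sup_{k}|D(h_{k}\circ u)|$ lies in $L_{p}$ it majorizes every $|D(h\circ u)|$. This route would not even need Theorem \ref{Th.finite_dimensional_approximation}, since it only requires $G_{m}\uparrow G$ rather than $L_{p}$-convergence of maps. However, it gives only maximal-function control on part of each level set, so it does not reach your Step 3.
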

Note that Pavlov's result is highly elegant, but it has several \textit{drawbacks}:

\begin{itemize}
\item[\((i)\)] The proof of Proposition \ref{Prop.Pavlov_2} is essentially based on the Euclidean structure of the source space.
Furthermore, the corresponding methods are based on delicate Sard-type arguments
and reductions to one-dimensional techniques. Such tools are completely
inaccessible not only in the case of general metric measure spaces as source spaces but even for $\Omega \subset \mathbb{R}^{n}$ with $n \geq 3$.

\item[\((ii)\)] The ``control'' of $|Du|$ is given by pointwise estimates on a ``massive enough'' compact set and achieved by composing $u$ with some $\mathbb{R}^{n}$-valued
Lipschitz functions. In the case $n=1$ this is much weaker than \eqref{eqq.OT_equality}.

\end{itemize}

\textit{Finally}, after the draft version of the main building blocks of the present paper has been formed, R.~D.~Oleinik, who independently nurtured ideas around Problem \ref{Problem.Oleinik} since 2024,
had a flash of insight that he is also close to a substantial progress. More precisely, a combination of his very recent results \cite{Oleinik1} with
an appropriate finite-dimensional reduction leads to solution of the problem under certain mild assumptions.
I was immediately informed about that and after a short discussion encouraged him to write an independent paper \cite{Oleinik2}.

\subsection{Main results and ideas}
We briefly fix some notation and terminology. The details can be found in Section 2.
Given $p \in [1,\infty)$, we say that a m.m.s. $\operatorname{X}=(\operatorname{X},\rho,\mu)$
is $p$-admissible and write $\operatorname{X} \in \mathcal{A}_{p}$ if the following properties hold:

\begin{itemize}
\item[\((A1)\)] the metric space $(\operatorname{X},\rho)$ is complete;

\item[\((A2)\)] the measure $\mu$ is uniformly locally doubling;

\item[\((A3)\)] the triple $(\operatorname{X},\rho,\mu)$ admits a weak  local
$(1,p)$-Poincar\'e inequality.
\end{itemize}

Given $q \in (1,\infty)$ and $r > 0$, for each $f \in L_{q}(\operatorname{X})$ by $\operatorname{M}^{\le r}_{q}[f]$ we denote the \textit{local
maximal function} of $f$ (see Section 3 for details).

Our \textit{first main result} which can be of independent interest reads as follows.
\begin{Th}
\label{Th.main_first}
For each $p \in (1,\infty)$ and each $\operatorname{X}=(\operatorname{X},\rho,\mu) \in \mathcal{A}_{p}$, there exist constants $\operatorname{C}_{1},\operatorname{C}_{2} > 0$ and a parameter $q \in (1,p)$ such that the following holds.
Given a complete pointed metric space $\operatorname{Y}=(\operatorname{Y},\operatorname{d},\underline{y})$ and a
map $u \in W_{p}^{1}(\operatorname{X},\operatorname{Y})$, for each $k \in \mathbb{N}$ and each $r \in (0,1)$, there exist
a set $\mathcal{S}_{k,r} \subset \operatorname{X}$ and a $1$-Lipschitz function $\underline{h}_{k,r}$ such that:

\begin{itemize}
\item[\((1)\)] for each $j \in \{-k-1,...,k\}$ it holds
\begin{equation}
\label{eqq.main_set}
\mu(\mathcal{S}_{k,r} \cap \{x \in \operatorname{X}:|Du|_{p}(x) \in (2^{j},2^{j+1}]\}) \geq \operatorname{C}_{1}\mu(\{x \in \operatorname{X}:|Du|_{p}(x) \in (2^{j},2^{j+1}]\});
\end{equation}

\item[\((2)\)] the following inequality holds
\begin{equation}
\label{eqq.main_Lip}
|Du|_{p}(x) \le \operatorname{C}_{2}\operatorname{M}^{\le r}_{q}[|D(\underline{h}_{k,r}\circ u)|_{p}](x) \quad \hbox{for $\mu$-a.e. $x \in \mathcal{S}_{k,r}$}.
\end{equation}
\end{itemize}
\end{Th}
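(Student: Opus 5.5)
The idea is to build $\underline{h}_{k,r}$ as a distance-type function $y\mapsto \operatorname{d}(y,\cdot)$ to suitable points or sets in $\operatorname{Y}$. Two facts drive the argument. First, by the equivalence with the Korevaar--Sch\"oen approach from \cite{GT}, $|Du|_{p}$ is controlled by, and essentially equal to, a limit of the rescaled energy densities
\[
e_{r}(x)=\Bigl(\fint_{B(x,r)}\Bigl(\tfrac{\operatorname{d}(u(x),u(z))}{r}\Bigr)^{p}\,d\mu(z)\Bigr)^{1/p}.
\]
Second, for a distance function $h_{y}=\operatorname{d}(y,\cdot)$ the Poincar\'e inequality (in its self-improved $(1,q)$ form, $q<p$, by Keith--Zhong) gives the pointwise Haj{\l}asz-type bound
\[
|h_{y}\circ u(x)-h_{y}\circ u(z)|\le C\rho(x,z)\bigl(\operatorname{M}^{\le r}_{q}[|D(h_{y}\circ u)|_{p}](x)+\operatorname{M}^{\le r}_{q}[|D(h_{y}\circ u)|_{p}](z)\bigr)
\]
for Lebesgue points $x,z$ with $\rho(x,z)<r$. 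This fixes the choice of $q\in(1,p)$ and explains the appearance of $\operatorname{M}^{\le r}_{q}$.

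The steps are as follows.

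\textbf{Step 1.} Fix $k$ and $r$. For each level set $E_{j}=\{|Du|_{p}\in(2^{j},2^{j+1}]\}$ with $j\in\{-k-1,\dots,k\}$, use Lebesgue differentiation and the Korevaar--Sch\"oen characterization. At a density point $x\in E_{j}$ and a small scale $s\le r$, a definite portion of $B(x,s)$ consists of points $z$ with $\operatorname{d}(u(x),u(z))\ge c\,2^{j}s$. The choice $h_{u(x)}=\operatorname{d}(u(x),\cdot)$ sees this oscillation exactly: $h_{u(x)}\circ u(z)-h_{u(x)}\circ u(x)=\operatorname{d}(u(x),u(z))$. Combined with the pointwise bound above, this gives $2^{j}\lesssim \operatorname{M}^{\le r}_{q}[|D(h_{u(x)}\circ u)|](x)$.

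\textbf{Step 2.} Replace the point-dependent $h_{u(x)}$ by a single $1$-Lipschitz function. Select by a Vitali/Besicovitch-type covering (available from the doubling property) a disjoint family of small balls $B_{i}=B(x_{i},s_{i})$ in $\bigcup_{j}E_{j}$ whose union captures a fixed fraction of each $E_{j}$. Then set
\[
\underline{h}_{k,r}=\min_{i}\bigl(\operatorname{d}(u(x_{i}),\cdot)+a_{i}\bigr)
\]
with offsets $a_{i}$, or a $\sup$/McShane-type gluing of the pieces. This function is $1$-Lipschitz. The offsets should be chosen, using the finitely many scales $2^{j}$, so that on a large portion $\mathcal{S}_{k,r}\cap B_{i}$ the minimum is attained by the $i$-th term. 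There $|D(\underline{h}_{k,r}\circ u)|$ coincides with $|D(h_{u(x_{i})}\circ u)|$ by locality of minimal weak upper gradients. Taking $\mathcal{S}_{k,r}$ to be these good portions gives \eqref{eqq.main_set} with a constant $\operatorname{C}_{1}$ depending only on doubling and Poincar\'e constants. Inequality \eqref{eqq.main_Lip} then follows from Step 1, which is applied uniformly at the centers and transferred to nearby points using the maximal function.

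\textbf{Main obstacle.} I expect the hardest part to be the gluing in Step 2. The distance functions from different centers interfere: $\operatorname{d}(u(x_{i}),u(z))$ for $z\in B_{i}$ may be smaller than the competing term from another ball, because $\operatorname{Y}$ is arbitrary and images of far-apart balls may be close. My first attempt would be a probabilistic/pigeonhole selection. Randomly choose a subfamily of the balls, or randomly choose offsets among $O(k)$ dyadic values, so that in expectation a fixed fraction of each $E_{j}$ is "won" by its own center. Then pick a realization achieving at least the expected fraction simultaneously for all $j\in\{-k-1,\dots,k\}$. If this fails to be uniform in $j$, I would instead accept losses controlled by the maximal function: a competing term can only lower $\underline{h}$, and averaging over $B(x,s)$ with exponent $q<p$ should absorb the error.
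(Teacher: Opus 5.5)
Your reduction of \eqref{eqq.main_Lip} to an oscillation lower bound, with $q<p$ coming from Keith--Zhong and the local maximal function, matches the paper. The gap is Step~2, which carries the whole content of the theorem. You locate the obstacle correctly, but neither remedy you propose overcomes it.

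Since the balls $B_i$ are selected in the source $\operatorname{X}$, nothing controls how the images $u(B_i)$ sit in $\operatorname{Y}$. The map $u$ may fold many disjoint source balls onto overlapping regions of $\operatorname{Y}$. So the number of competing centers $u(x_{i'})$ near $u(B_i)$ at the relevant scale is not controlled by the doubling and Poincar\'e data of $\operatorname{X}$, and $\operatorname{Y}$ has no doubling structure that could thin them out.

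It is also not enough that the $i$-th term wins at $x_i$. It must win at the points $z\in B_i$ with $\operatorname{d}(u(x_i),u(z))\gtrsim 2^{j}s_i$, since these carry the oscillation, and they are exactly the points most exposed to competitors. Where another term wins, $\underline h(u(z))$ can fall back to $\underline h(u(x_i))$, and the oscillation of $\underline h\circ u$ on $B_i$ is lost.

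No averaging in $\operatorname{M}^{\le r}_{q}$ restores oscillation that the single function $\underline h\circ u$ does not have. So the ``absorb the losses'' fallback has no mechanism. Random offsets or random subfamilies would need an a priori bound on the number of competitors to give a winning probability independent of $\operatorname{Y}$, and your setup supplies none.

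Locality of minimal weak upper gradients is also the wrong transfer tool. Equality of $|D(\underline h\circ u)|_{p}$ and $|D(h_{u(x_i)}\circ u)|_{p}$ on part of $B_i$ says nothing about averages over whole balls, which is what the maximal function sees. What you need is a lower bound for the mean oscillation of $\underline h\circ u$ itself on balls around $x$. The Poincar\'e inequality then turns this into the maximal function, as in Proposition~\ref{Prop.Sobolev_imb}.

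The missing idea is to make the selection in the target, with the scale in $\operatorname{X}$ adapted pointwise.
\begin{itemize}
\item The paper takes Lusin--Lipschitz pieces $E_j$ of the layers $\mathcal{D}_{p,j}[u]$. It shows (Proposition~\ref{Prop.Reduction_to_Vitali}) that the push-forwards under $u$ of the measures $|Du|_{p}^{p}\chi_{E_j}\mu$ are uniformly weakly equi-asymptotically doubling on $u(\bigcup_j E_j)$. This doubling of the image measures substitutes for the doubling that $\operatorname{Y}$ lacks.
\item A $5B$-Vitali selection in $\operatorname{Y}$, among dyadic balls on which this doubling holds for a large enlargement factor, gives disjoint, well-separated balls $\underline B_m$. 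Their preimages still capture a fixed fraction of every layer $j\in\{-k-1,\dots,k\}$, which is \eqref{eqq.main_set}.
\item The function $\underline h$ is a sum of plateaus: it equals $r(\underline B_m)$ on $\underline B_m$ and vanishes off $\bigcup_m 2\underline B_m$. There is therefore no interference at all.
\item For $x\in u^{-1}(\underline B_m)$ with $r(\underline B_m)=2^{-k_m}$, one works at the scale $r_{k_m}[u](x)$, where the oscillation of $u$ at $x$ matches the radius of the ball in $\operatorname{Y}$.
\item Apply the bound $\operatorname{Lip}\lesssim$ energy of Theorem~\ref{Th.energy_density} to a Lipschitz extension from the Lusin set, and transfer it back by locality. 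This caps the displacement $\operatorname{d}(u(x'),u(x))$, $x'\in B_{r_{k_m}[u](x)}(x)$, by a multiple of $2^{-k_m}$. Hence the points carrying the energy leave the plateau but land in the surrounding annulus where $\underline h=0$, never in another plateau (Proposition~\ref{Prop.single_Lipschitz}).
\item Only then does Proposition~\ref{Prop.Sobolev_imb} convert the energy of $\underline h\circ u$ into the maximal function in \eqref{eqq.main_Lip}.
\end{itemize}
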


The remarkable fact about Theorem \ref{Th.main_first} is that the constants $\operatorname{C}_{1},\operatorname{C}_{2} > 0$ do not depend on $\operatorname{Y}$.
This fact combined with smart approximations of a given map $u \in L_{p}(\operatorname{X},\operatorname{Y}) \setminus W_{p}^{1}(\operatorname{X},\operatorname{Y})$
by maps $u_{n} \in W_{p}^{1}(\operatorname{X},\operatorname{Y})$, $n \in \mathbb{N}$, with finite-dimensional images and big ``energies'' lead to the \textit{second main result} of this paper.
\begin{Th}
\label{Th.main_second}
Let $p \in (1,\infty)$, $\operatorname{X}=(\operatorname{X},\rho,\mu) \in \mathcal{A}_{p}$, and let $\operatorname{Y}=(\operatorname{Y},\operatorname{d},\underline{y})$ be a complete pointed metric space.
A Borel map $ u:\operatorname{X} \to \operatorname{Y}$ belongs to the Ambrosio--Reshetnyak--Sobolev class $W_{p}^{1}(\operatorname{X},\operatorname{Y})$ if and only if, for each $1$-Lipschitz function $h:\operatorname{X} \to \mathbb{R}$, the composition $h \circ  u$ belongs to the Sobolev space $W_{p}^{1}(\operatorname{X})$.
\end{Th}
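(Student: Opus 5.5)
The necessity part of Theorem \ref{Th.main_second} is immediate from the definition of $W_{p}^{1}(\operatorname{X},\operatorname{Y})$. The real content is sufficiency. So assume that $h\circ u \in W_{p}^{1}(\operatorname{X})$ for every $1$-Lipschitz $h$, and argue by contradiction, supposing $u \notin W_{p}^{1}(\operatorname{X},\operatorname{Y})$.

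\textbf{Step 1: a uniform energy bound.} First I would extract a uniform bound. Consider the family $\mathcal{H}$ of all $1$-Lipschitz $h$ with $h(\underline{y})=0$. I claim $\sup_{h\in\mathcal{H}}\||D(h\circ u)|_{p}\|_{L_{p}}<\infty$. If not, pick $h_{n}$ with energies at least $4^{n}$. Form a combination such as $h=\sum_{n}2^{-n}h_{n}$, or better a $\max$/$\min$-type combination with sign choices, which is again $1$-Lipschitz. Its composition with $u$ would then fail to be in $W_{p}^{1}$. Linear sums suffer from cancellation, so I would use a Baire-category argument instead. Equip $\mathcal{H}$ with pointwise convergence on the separable support of $u$; this makes it a compact space. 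The sets $\{h:\||D(h\circ u)|_{p}\|_{p}\le N\}$ are closed by lower semicontinuity of the energy under $L_{p}$-convergence, and they exhaust $\mathcal{H}$. So one of them has interior. Using the convexity of $\mathcal{H}$ and the triangle inequality for minimal weak upper gradients, a uniform bound $\||D(h\circ u)|_{p}\|_{p}\le M$ follows for all $h\in\mathcal{H}$. This is also where $u\in L_{p}$ comes from, by taking $h=\operatorname{d}(\cdot,\underline{y})$.

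\textbf{Step 2: approximation by finite-dimensional images.} Since $u$ is essentially separably valued, choose a dense sequence $\{y_{i}\}$ in its essential image. Define $u_{n}=\iota_{n}\circ u$, where $\iota_{n}:\operatorname{Y}\to\ell_{\infty}^{n}$ is the Kuratowski-type embedding $y\mapsto(\operatorname{d}(y,y_{i})-\operatorname{d}(\underline{y},y_{i}))_{i\le n}$, or a suitable nearest-point truncation. Each coordinate is $h\circ u$ with $h$ $1$-Lipschitz, so $u_{n}$ has finite-dimensional target. Its $W_{p}^{1}$-membership holds with majorant $\max_{i\le n}|D(\operatorname{d}(\cdot,y_{i})\circ u)|_{p}$. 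Moreover, every $1$-Lipschitz function on $\ell_{\infty}^{n}$ precomposed with $\iota_{n}$ is $1$-Lipschitz on $\operatorname{Y}$. Hence $\underline{h}\circ u_{n}=(\underline{h}\circ\iota_{n})\circ u$ has energy at most $M$ for any $1$-Lipschitz $\underline{h}$.

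\textbf{Step 3: applying Theorem \ref{Th.main_first}.} Apply Theorem \ref{Th.main_first} to $u_{n}$. The constants $\operatorname{C}_{1},\operatorname{C}_{2},q$ do not depend on the target space, and the local maximal operator $\operatorname{M}^{\le r}_{q}$ is bounded on $L_{p}$ since $q<p$. Combining this with the level-set estimate \eqref{eqq.main_set}, summed over $j$ and letting $k\to\infty$, gives
\[
\||Du_{n}|_{p}\|_{L_{p}}^{p}\le C\operatorname{C}_{1}^{-1}\operatorname{C}_{2}^{p}M^{p}.
\]
So the energies of $u_{n}$ are bounded uniformly in $n$. Since $\iota_{n}\circ u\to\iota\circ u$, where $\iota$ is the isometric embedding into $\ell_{\infty}$, the functions $|Du_{n}|_{p}$ are nondecreasing in $n$ because the coordinate families increase. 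Their limit $G$ lies in $L_{p}$ and dominates $|D(h\circ u)|_{p}$ for every $h$ depending on finitely many coordinates. By density and lower semicontinuity, $G$ then dominates $|D(h\circ u)|_{p}$ for all $1$-Lipschitz $h$. This gives $u\in W_{p}^{1}(\operatorname{X},\operatorname{Y})$, the desired contradiction.

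\textbf{Main obstacle.} I expect Step 1 to be the hardest: getting the uniform bound from mere qualitative membership. Naive series of $1$-Lipschitz functions cancel, so this needs either the category argument above or an explicit construction using disjointly supported pieces of $\operatorname{X}$ where the energies concentrate. A secondary delicate point is passing from finitely-coordinate $h$ to arbitrary $h$ in Step 3. There I would approximate $h$ by $\inf_{i\le n}(h(y_{i})+\operatorname{d}(\cdot,y_{i}))$, which is $1$-Lipschitz and factors through $\iota_{n}$.
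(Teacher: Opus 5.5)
Your argument is correct, and it is organized differently from the paper's.

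\textbf{The paper's route.} It argues by contradiction. Assuming $u\notin W_{p}^{1}(\operatorname{X},\operatorname{Y})$, Theorem~\ref{Th.finite_dimensional_approximation} builds $2$-Lipschitz maps $F_{n}$ into compact subsets of $\mathbb{R}^{N_{n}}_{\infty}$. These come from metric projections onto compact pieces of the image, the truncations $\Phi_{j}$, and offcuts of the localized embedding $\mathfrak{I}$. The maps $F_{n}\circ u$ approximate $u$ in $L_{p}$, and by the map-valued closure property (Proposition~\ref{Prop.weak_convergence}) their energies blow up. Corollary~\ref{Ca.main} then converts this into scalar functions $\underline{h}_{k_{n}}\circ F_{n}$ with exploding energy. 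Finally the constructive Banach--Steinhaus principle (Proposition~\ref{Prop.Banach_Steinhaus}) is invoked to get a single bad $h$.

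\textbf{Your route.} You reverse the order. Uniform boundedness comes first, as a Baire argument for the single lower semicontinuous seminorm $h\mapsto\||D(h\circ u)|_{p}\|_{L_{p}}$. This is essentially Zabreiko's lemma; your compact pointwise topology works, and so would the Banach space of Lipschitz functions vanishing at $\underline{y}$. Corollary~\ref{Ca.main} is then applied only to the plain Kuratowski truncations $\iota_{n}\circ u$, giving $\sup_{n}\||D(\iota_{n}\circ u)|_{p}\|_{L_{p}}\le C_{3}M$. Monotonicity in $n$, the approximants $\min_{i\le n}(h(y_{i})+\operatorname{d}(\cdot,y_{i}))$, and scalar closure then produce the majorant directly.

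\textbf{What each buys.} Your version needs neither Theorem~\ref{Th.finite_dimensional_approximation}, nor the modified embedding $\mathfrak{I}$, nor closure for map-valued sequences. It also yields the quantitative bound $\||Du|_{p}\|_{L_{p}}\le C_{3}\sup_{h}\||D(h\circ u)|_{p}\|_{L_{p}}$, with the supremum over $1$-Lipschitz $h$ vanishing at $\underline{y}$. It also makes the paper's last step unnecessary. There, the energy of $h\circ F_{n}\circ u$ is compared with that of $h\circ u$, and this does not follow from the Lipschitz bounds on $F_{n}$ alone. What the paper's route offers, in intent, is a Baire-free, constructive production of the bad function $h$.

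\textbf{Small points to fix in a write-up.}
\begin{itemize}
\item Establish $u\in L_{p}$, via $h=\operatorname{d}(\cdot,\underline{y})$, before claiming the sublevel sets in Step 1 are closed. The dominated convergence there needs it.
\item Work with restrictions of $h$ to the closure of the essential image of $u$, so that the pointwise topology is Hausdorff and compact metrizable.
\item Take $y_{1}=\underline{y}$ in the dense sequence. Then $0\le h_{n}\circ u-h\circ u\le 2\operatorname{d}(u,\underline{y})$ gives $L_{p}$-convergence even when $\mu(\operatorname{X})=\infty$.
\end{itemize}
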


For the reader convenience, below we informally sketch main ideas needed for the proofs of Theorems \ref{Th.main_first} and \ref{Th.main_second}.
We would like to underline that our methods are new even in the case $\operatorname{X}=(\mathbb{R}^{n},\mathcal{L}^{n},\|\cdot\|)$ and
completely differ from those developed in \cite{OT,Oleinik2,PavVod,Pav}.

The proof of Theorem \ref{Th.main_first} is based on the following ideas:

\begin{itemize}
\item[\(($I.1.1$)\)] Under the assumptions of the theorem, each map $u \in W_{p}^{1}(\operatorname{X},\operatorname{Y})$
has the Lusin--Lipschitz property. This fact in combination with locality properties allow to transfer some infinitesimal arguments from Lipschitz to Sobolev context.

\item[\(($I.1.2$)\)] Under the assumptions of the theorem and separability of $\operatorname{Y}$,
one can show that each Lipschitz map $u:\operatorname{X} \to \operatorname{Y}$
has the $\operatorname{Lip}-\operatorname{lip}$ property. Furthermore, the lower and the upper pointwise Korevaar--Sch\"oen energies appear to be comparable.

\item[\(($I.1.3$)\)] The third crucial idea is that the push-forward weighted measure $u_{\sharp}(|Du|_{p}\mu)$ with the weight $|Du|_{p}$ is uniformly asymptotically doubling
on the image of the set, where $|Du|_{p}$ is nonzero.

\item[\(($I.1.4$)\)] Having at our disposal $(\operatorname{I}.1.2)$ and $(\operatorname{I}.1.3)$, by the standard $5B$-Vitali lemma we can
build a special family of balls $\{B_{i}\}$ in $\operatorname{Y}$ whose pre-images under the map $u$ ``bite'' big pieces of the ``Lebesgue
layers'' of $|Du|_{p}$.
\end{itemize}

The proof of Theorem \ref{Th.main_second} is based on the following ideas:

\begin{itemize}
\item[\(($I.2.1$)\)] Assuming on the contrary
that a given map $u \in L_{p}(\operatorname{X},\operatorname{Y})$ does not belong to $W_{p}^{1}(\operatorname{X},\operatorname{Y})$ we find a sequence of
$2$-Lipschitz maps $\{F_{n}\}$ acting from $\operatorname{Y}$ to compacts $\{K_{n}\}$ lying in finite-dimensional Euclidean spaces such that
$\{F_{n} \circ u\}$ is the sequence of Sobolev maps converging to $u$ in the $L_{p}$-sense whose energies tend to infinity.

\item[\(($I.2.2$)\)] Applying Theorem \ref{Th.main_first} to each $F_{n} \circ u$ we build a sequence of $1$-Lipschitz scalar-valued
functions on $\operatorname{Y}$ whose compositions with $u$ have Sobolev norms tending to infinity.

\item[\(($I.2.3$)\)] It remains to use the Banach--Steinhaus-type arguments from \cite{OT}.
\end{itemize}

We should draw the reader attention that, while the idea of fruitfulness of finite dimensional reductions for solving Problem \ref{Problem.Oleinik}
has been observed by S.~V.~Pavlov in \cite{Pav}, the
method of the corresponding reduction is completely different from our idea (I.2.1). Indeed, the roots of Pavlov's approach go back to the Reshetnyak o-boundedness concept \cite{Re}, whereas
our approach is based on a new localized version of the Fr\'echet imbedding in combination with some weak convergence ideas used in \cite{GT}.

\subsection{Open problems}

A few words should be said about perspectives for the future investigations and some open problems.

\begin{itemize}
\item[\(($P1$)\)] We do not study Problem \ref{Problem.Oleinik} in the case $p=1$ because it
is quite specific and calls for completely different methods and techniques.

\item[\(($P2$)\)] We do not consider
analogs of Problem \ref{Problem.Oleinik} in the $BV$-case. As it was said above, even in the simplest one-dimensional case
there is no chance to control jumps of maps by Lipschitz functions. Under the additional continuity assumptions we still need
some new methods and tools to obtain any reasonable analog of Proposition \ref{Prop.OT_1}.

\item[\(($P3$)\)] It will be interesting to obtain an analog of \eqref{eqq.main_Lip} where the local maximal function $\operatorname{M}^{\le r}_{q}[|D(\underline{h}_{k,r}\circ u)|_{p}]$
is replaced by $|D(\underline{h}_{k}\circ u)|_{p}$ for some $\underline{h}_{k} \in \operatorname{LIP}_{1}(\operatorname{Y})$.
Furthermore, it is unclear at this stage, whether it is possible to find a unique Lipschitz function $\underline{h}$ for which
\eqref{eqq.main_Lip} holds for all $k \in \mathbb{N}$ and all $r > 0$ with $\underline{h}_{k,r}$ replaced by $\underline{h}$.

\item[\(($P4$)\)] Is it possible to establish analogs of Theorems \ref{Th.main_first}, \ref{Th.main_second} without \textit{any
additional} measure theoretic and metric assumptions on $\operatorname{X}$ and $\operatorname{Y}$, respectively?

\end{itemize}

As was mentioned above, independently and almost simultaneously R.~D.~Oleinik also solved Problem \ref{Problem.Oleinik} in \cite{Oleinik2}.
More precisely, using abstract techniques he proved an analog of Theorem \ref{Th.main_second} under less restrictive assumptions on $\operatorname{X}$.
Without any additional assumptions on $\operatorname{Y}$, his approach covers the case when $\operatorname{X}$ has a finite Hausdorff dimension and
functions in the Sobolev space $W_{p}^{1}(\operatorname{X})$ have the Lusin--Lipschitz property. However, any analogs of Theorem \ref{Th.main_first} have
not been considered in \cite{Oleinik2} under such general assumptions on $\operatorname{X}$, and we do not know, whether methods developed in \cite{Oleinik1,Oleinik2} are suitable for that.

\textbf{Acknowledgments.} I am grateful to my student R.~D.~Oleinik who firstly posed Problem \ref{Problem.Oleinik} and attracted my attention to the subject.
Initially, I was a little bit sceptic about that. However, during the works \cite{OT,ST} I realised the depth and the beauty of the problem. Furthermore,
several mathematicians mentioned in the introduction appeared to be drawn into the orbit of Problem \ref{Problem.Oleinik}.

I am also grateful to my students A.~Y.~Chikalov, R.~D.~Oleinik, and to my colleague I.~M.~Vasilyev for careful reading of the preliminary version of this manuscript. Their valuable comments
helped me to improve the exposition.

\textbf{AI declaration.} I confirm that AI has not been used at any stage of writing of this paper. All ideas, proofs
and verifications of the results have been carried out by \textit{myself only.}

\section{Preliminaries}

The aim of this section is to fix the main terminology and notation that will be used throughout the paper.
Some important technical propositions and lemmas will be considered in the next paragraph.
This is done for readers convenience. The expert reader can briefly catch notation and skip the next section taking his time, whereas the non-expert reader
can go line by line through the current and the next sections.

\subsection{Metric spaces}
Given a metric space $(\operatorname{Z},\operatorname{d}_{\operatorname{Z}})$, the symbol $B$ always stands for an \textit{open ball} in $\operatorname{Z}$ with
a prescribed center and radius. If $B$ is a ball in $\operatorname{Z}$, then we let $\overline{B}$ denote the \textit{closed ball} with the same center and radius.
More precisely, for some $z \in \operatorname{Z}$ and $r > 0$, 
\begin{equation}
\label{eqq.balls}
B=B_{r}(z):=\{z' \in \operatorname{Z}: \operatorname{d}(z',z) < r\}, \quad \overline{B}=\overline{B}_{r}(z):=\{z' \in \operatorname{Z}: \operatorname{d}(z,z') \le r\}.
\end{equation}
Furthermore, given a ball $B=B_{r}(z)$ in $\operatorname{Z}$ we let $r(B)$ denote its radius, $r(B):=r$.
Finally, if $c > 0$, then we will use notation $cB:=B_{rc}(z)$.

Given a metric space $(\operatorname{Z},\operatorname{d}_{\operatorname{Z}})$ and a set $E \subset \operatorname{Z}$, by $\chi_{E}$ we
denote the \textit{characteristic function} of $E$. If $E$ is nonempty,
we will occasionally consider $E$ as a \textit{metric subspace} of $\operatorname{Z}$, i.e. as a metric space
whose metric $\operatorname{d}_{\operatorname{Z}}|_{E}$ is the restriction of $\operatorname{d}_{\operatorname{Z}}$ to $E$. The \textit{diameter} of a
set $E \subset \operatorname{Z}$ is defined by the equality
$\operatorname{diam}E:=\sup\{\operatorname{d}(z_{1},z_{2}):z_{1},z_{2} \in E\}$, as usual. Furthermore, given two nonempty sets $E_{1},E_{2} \subset \operatorname{Z}$ we define the \textit{distance}
between them by the equality $\operatorname{dist}(E_{1},E_{2}):=\inf\{\operatorname{d}_{Z}(z_{1},z_{2}):z_{1} \in E_{1}, z_{2} \in E_{2}\}$.

Throughout this paper, usually, we work with the so-called \textit{pointed metric spaces} considered as triples $\operatorname{Z}=(\operatorname{Z},\operatorname{d}_{\operatorname{Z}},\underline{z})$ composed
of a metric space  $(\operatorname{Z},\operatorname{d}_{\operatorname{Z}})$ and a certain distinguished point $\underline{z} \in \operatorname{Z}$. This is extremely convenient for the work with metric-valued
maps.

Given $n \in \mathbb{N}$, the symbol $\mathbb{R}^{n}$ means the usual $n$-dimensional real \textit{linear space}. We
equip $\mathbb{R}^{n}$ with the \textit{non-Euclidean} $\sup$-norm $\|\cdot\|_{\infty}$, i.e.
$\|x\|_{\infty}:=\max\{|x_{1}|,...,|x_{n}|\}$ unless otherwise is stated. We set $\mathbb{R}^{n}_{\infty}:=(\mathbb{R}^{n},\|\cdot\|_{\infty})$ for brevity.
The symbol $l_{\infty}$ denotes the Banach space of all bounded sequences $x=(x_{1},...,x_{i},....)$ of real numbers equipped with the
norm $\|x\|_{\infty}:=\sup_{i \in \mathbb{N}}|x_{i}|$.

We recall the famous Fr\'echet imbedding (see Section 4.1 in \cite{HKST} for details).
\begin{Lm}
\label{Lm.Frechet}
If $\operatorname{Z}=(\operatorname{Z},\operatorname{d},\underline{z})$ is a pointed separable metric space, then the map
$\iota:\operatorname{Z} \to l_{\infty}$ given by
\begin{equation}
\label{eqq.Frechet}
\iota(z):=(\operatorname{d}(z,z_{1})-\operatorname{d}(z_{1},\underline{z}),...,\operatorname{d}(z,z_{n})-\operatorname{d}(z_{n},\underline{z}),...), \quad z \in \operatorname{Z}
\end{equation}
is an isometry of $\operatorname{Z}$ into $l_{\infty}$ sending $\underline{z}$ to $0$.
\end{Lm}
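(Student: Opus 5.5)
We need a dense sequence $\{z_n\}_{n \in \mathbb{N}}$ in $\operatorname{Z}$; the statement implicitly fixes one, and separability guarantees it exists. The plan has three steps: show that $\iota(z)$ lies in $l_{\infty}$, show that $\iota$ is $1$-Lipschitz, and then use density of $\{z_n\}$ to get the reverse inequality. The base-point normalization $\iota(\underline{z})=0$ is immediate, since every coordinate of $\iota(\underline{z})$ is $\operatorname{d}(\underline{z},z_n)-\operatorname{d}(z_n,\underline{z})=0$.

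First, well-definedness. By the triangle inequality, for each $n$,
\[
|\operatorname{d}(z,z_n)-\operatorname{d}(z_n,\underline{z})| \le \operatorname{d}(z,\underline{z}).
\]
Hence $\sup_n|\iota(z)_n| \le \operatorname{d}(z,\underline{z})<\infty$, so $\iota(z)\in l_{\infty}$. This is exactly where the subtraction of $\operatorname{d}(z_n,\underline{z})$ is needed; without it the coordinate sequence could be unbounded.

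Second, the upper bound. For $z,w\in\operatorname{Z}$ the normalizing terms cancel coordinatewise, so
\[
|\iota(z)_n-\iota(w)_n| = |\operatorname{d}(z,z_n)-\operatorname{d}(w,z_n)| \le \operatorname{d}(z,w)
\]
for every $n$. Taking the supremum over $n$ gives $\|\iota(z)-\iota(w)\|_{\infty}\le \operatorname{d}(z,w)$.

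Third, the lower bound, which is the only step that uses density. Fix $\varepsilon>0$ and choose $n$ with $\operatorname{d}(w,z_n)<\varepsilon$. Then
\[
|\operatorname{d}(z,z_n)-\operatorname{d}(w,z_n)| \ge \operatorname{d}(z,z_n)-\varepsilon \ge \operatorname{d}(z,w)-2\varepsilon .
\]
So $\|\iota(z)-\iota(w)\|_{\infty}\ge \operatorname{d}(z,w)-2\varepsilon$. Letting $\varepsilon\to0$ yields equality, and $\iota$ is an isometric embedding.

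I expect no real obstacle. The only subtle point is the lower bound, which requires choosing the dense sequence beforehand so that the supremum over $n$ nearly recovers $\operatorname{d}(z,w)$.
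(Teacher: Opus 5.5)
Your proof is correct. The paper gives no proof of its own and simply cites Section 4.1 of \cite{HKST}; your argument (boundedness of each coordinate via the triangle inequality, $1$-Lipschitz bound by cancellation of the normalizing terms, reverse inequality from density) is the standard one found there, and you rightly point out that the sequence $\{z_n\}$ must be taken dense in $\operatorname{Z}$, which the statement leaves implicit.
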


\subsection{Measures}
Let $\operatorname{Z} = (\operatorname{Z},\operatorname{d}_{\operatorname{Z}})$ be a metric space. Following traditions adopted in the contemporary measure theory (see Ch.3 in \cite{HKST} for details), by a \textit{measure} on $\operatorname{Z}$ we always mean an \textit{outer measure}, i.e. a countably subadditive set-function  $\mathfrak{m}:2^{\operatorname{Z}} \to [0,+\infty]$ such that $\mathfrak{m}(\emptyset)=0$.
We say that a measure $\mathfrak{m}$ on $\operatorname{Z}$ is \textit{Borel} if each Borel subset of $\operatorname{Z}$ is $\mathfrak{m}$-measurable.
Given a measure $\mathfrak{m}$ on $\operatorname{Z}$ and a nonempty set $E \subset \operatorname{Z}$,
we denote by $\mathfrak{m}\lfloor_{E}$ the \textit{restriction} of $\mathfrak{m}$ to $E$, i.e. the measure defined by the equality
\begin{equation}
\label{eqq.restriction_measure}
\mathfrak{m}\lfloor_{E}(E'):=\mathfrak{m}(E \cap E') \quad \hbox{for each $E' \subset \operatorname{Z}$}.
\end{equation}
If $\mathfrak{m}$ is a Borel measure on $\operatorname{X}$, then, for each $E \subset \operatorname{X}$, the measure $\mathfrak{m}\lfloor_{E}$ is
Borel (see Lemma 3.3.13 in \cite{HKST} for details).

Let $\operatorname{X}=(\operatorname{X},\rho)$ and
$\operatorname{Y}=(\operatorname{Y},\operatorname{d})$ be metric spaces. Given a measure $\mathfrak{m}$ on $\operatorname{X}$ and a map $F: \operatorname{X} \to \operatorname{Y}$,
the \textit{push-forward measure} $F_{\sharp}\mathfrak{m}$ is the measure on $\operatorname{Y}$ defined by the equality
\begin{equation}
\label{eqq.push_forward}
(F_{\sharp}\mathfrak{m})(E):=\mathfrak{m}(F^{-1}(E)) \quad \hbox{for each set $E \subset \operatorname{Y}$}.
\end{equation}

Throughout the paper by a \textit{metric measure space}, m.m.s. for short, we always mean a triple $(\operatorname{X},\operatorname{d},\mathfrak{m})$, where
$(\operatorname{X},\operatorname{d})$ is a separable metric space and $\mathfrak{m}$ is a nontrivial Borel regular measure on $(\operatorname{X},\operatorname{d})$
giving finite mass to bounded sets.

\subsection{Borel and Lipschitz maps} Given a map $F:\operatorname{X} \to \operatorname{Y}$ and a nonempty set $E \subset \operatorname{X}$, by $F|_{E}$ we denote the \textit{pointwise restriction} of $F$ to $E$.

Given metric spaces $\operatorname{X}=(\operatorname{X},\rho)$ and $\operatorname{Y}=(\operatorname{Y},\operatorname{d})$, we let $\mathfrak{B}(\operatorname{X},\operatorname{Y})$ denote the
set of all \textit{Borel maps} $F$ from $\operatorname{X}$ to $\operatorname{Y}$. We also put $\mathfrak{B}(\operatorname{X}):=\mathfrak{B}(\operatorname{X},\mathbb{R})$.

Given a m.m.s. $\operatorname{X}:=(\operatorname{X},\rho,\mu)$ and a pointed complete metric space $\operatorname{Y}:=(\operatorname{Y},\operatorname{d},\underline{y})$ we denote by $L_{0}(\operatorname{X},\operatorname{Y})$ the
collection of all \textit{equivalence classes} (up to $\mu-$a.e. equality) of maps from $\mathfrak{B}(\operatorname{X},\operatorname{Y})$ with \textit{separable}
range. Given $p \in [1,+\infty]$, we put
\begin{equation}
\label{eqq.Lebesgue_space}
L_{p}(\operatorname{X},\operatorname{Y}):=
\begin{cases}
\{u \in L_{0}(\operatorname{X},\operatorname{Y}): \int_{\operatorname{X}}\operatorname{d}^{p}( u(x),\underline{y})\,d\mu(x) < +\infty\}, \quad \hbox{if $p < +\infty$};\\
\{u \in L_{0}(\operatorname{X},\operatorname{Y}): \operatorname{ess}\sup_{x \in \operatorname{X}}\operatorname{d}(u(x),\underline{y}) < +\infty\}, \quad \hbox{if $p = +\infty$}.
\end{cases}
\end{equation}

\textbf{Important notation.} If $\operatorname{Y}$ is the classical real line $\mathbb{R}$ whose distance between two numbers
is given by the modulus of their difference, then we omit it from the corresponding notation, i.e. we always put $L_{p}(\operatorname{X}):=L_{p}(\operatorname{X},\mathbb{R})$, $p \in [1,\infty]$.
Furthermore, given $p \in [1,\infty]$, we write $f \in L_{p}^{\rm loc}(\operatorname{X})$, if $f \in L_{p}(B)$ for each nonempty ball $B$ in $\operatorname{X}$ considered as a metric subspace.
Finally, given $f \in L_{p}^{\rm loc}(\operatorname{X})$ and a \textit{bounded} measurable set $E \subset \operatorname{X}$, we put
\begin{equation}
\label{eqq.averaged_integral}
\fint\limits_{E}f(x)\,d\mathfrak{m}(x):=
\begin{cases}
\frac{1}{\mu(E)}\int\limits_{E}f(x)\,d\mu(x), \quad \mu(E) > 0; \\
0, \quad \mu(E) = 0.
\end{cases}
\end{equation}

\begin{Remark}
\label{Rem.representatives}
Throughout this paper we use a simplification of the common terminology in the measure theory. Unless otherwise is stated, the words ``a map $ u \in L_{p}(\operatorname{X},\operatorname{Y})$''
should be interpreted in such a way that, the corresponding equivalence class of $ u$ belongs to $L_{p}(\operatorname{X},\operatorname{Y})$. All equalities and inequalities involving $ u$
are always implied to hold almost everywhere for an arbitrary chosen representative of $ u$. Finally, if $\operatorname{Z}:=(\operatorname{Z},\operatorname{d}_{\operatorname{Z}})$ is a metric space, $F \in \mathfrak{B}(\operatorname{Y},\operatorname{Z})$ and
$ u \in L_{0}(\operatorname{X},\operatorname{Y})$, then the symbol $F \circ  u$ means an equivalence class (modulo coincidence $\mu$-a.e.) of the map $F \circ \overline{ u}$ for an arbitrary
chosen Borel representative $\overline{ u}$ of $ u$.
\end{Remark}

Given $L > 0$ and metric spaces $\operatorname{X}=(\operatorname{X},\rho)$, $\operatorname{Y}=(\operatorname{Y},\operatorname{d})$, a map $F:\operatorname{X} \to \operatorname{Y}$ is said to be \textit{$L$-Lipschitz} if
\begin{equation}
\label{eqq.Lipschitz_map}
\operatorname{d}(F(x),F(y)) \le L \rho(x,y) \quad \hbox{for all} \quad x,y \in \operatorname{X}.
\end{equation}
We say that a map $F:\operatorname{X} \to \operatorname{Y}$ is \textit{Lipschitz} if it is $L$-Lipschitz for some $L > 0$.
Furthermore, by $L_{F}$ we denote the global Lipschitz constant of $F$, i.e. the least $L$ for which \eqref{eqq.Lipschitz_map} holds.

We recall a slightly generalized version of the concepts introduced in \cite{Keith} (see also \cite{KlMac} for a simplified exposition).
Given $F \in \operatorname{LIP}(\operatorname{X},\operatorname{Y})$, for each $r > 0$ and $\underline{x} \in \operatorname{X}$,
the \textit{variation} of $F$ on a ball $B_{r}(\underline{x})$ in $\operatorname{X}$ is defined by the equality
\begin{equation}
\label{eqq.variation_on_a_ball}
\operatorname{var}_{r}[F](\underline{x}):=\sup\Bigl\{\frac{\operatorname{d}(F(x_{1}),F(x_{2}))}{r}:x_{1},x_{2} \in B_{r}(\underline{x})\Bigr\}.
\end{equation}
Sometimes, we will also deal with a little bit different concept of variation. Namely, we put
\begin{equation}
\label{eqq.variation_on_a_ball'}
\widetilde{\operatorname{var}}_{r}[F](\underline{x}):=\sup\Bigl\{\frac{\operatorname{d}(F(\underline{x}),F(x))}{r}:x \in B_{r}(\underline{x})\Bigr\}.
\end{equation}

\begin{Remark}
\label{Rem.different_variations}
By the triangle inequality, we clearly have
\begin{equation}
\label{eqq.different_variations}
\widetilde{\operatorname{var}}_{r}[F](\underline{x}) \le \operatorname{var}_{r}[F](\underline{x}) \le 2\widetilde{\operatorname{var}}_{r}[F](\underline{x}).
\end{equation}
\end{Remark}

Furthermore, given $\underline{x} \in \operatorname{X}$, the \textit{lower} and the \textit{upper pointwise Lipschitz constants} of $F$ at $\underline{x} \in \operatorname{X}$ are defined by the equalities
\begin{equation}
\label{eqq.lower_upper_pointwise_Lipschitz}
\operatorname{lip}[F](\underline{x}):=\varliminf\limits_{r \to 0}\operatorname{var}_{r}[F](\underline{x}), \quad \operatorname{Lip}[F](\underline{x}):=\varlimsup\limits_{r \to 0}\operatorname{var}_{r}[F](\underline{x}).
\end{equation}

\begin{Remark}
\label{Rem.Lip_is_Borel}
The same arguments as in the proof of Lemma 3.1 in \cite{KlMac} allow to conclude that the functions $x \mapsto \operatorname{lip}[F](x)$ and $x \mapsto \operatorname{Lip}[F](x)$ are Borel.
\end{Remark}

We recall the McShane--Whitney extension lemma (see \S 4.1 in \cite{HKST} for the details).
\begin{Lm}
\label{Lm.McShane_Whitney}
Given a nonempty set $E \subset \operatorname{X}$, for each $\widetilde{h} \in \operatorname{LIP}(E,l_{\infty})$,
there is $h \in \operatorname{LIP}(\operatorname{X},l_{\infty})$ such that $h|_{E}=\widetilde{h}$,  $\|\widetilde{h}\|_{L_{\infty}(E)}=\|h\|_{L_{\infty}(\operatorname{X})}$, and
$L_{h}=L_{\widetilde{h}}$.
\end{Lm}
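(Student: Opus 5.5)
The statement is the McShane--Whitney extension for $l_{\infty}$-valued maps. I will reduce it to the scalar case coordinatewise, because the norm of $l_{\infty}$ is a supremum of coordinate moduli. Write $\widetilde{h}=(\widetilde{h}_{1},\widetilde{h}_{2},\dots)$ with $\widetilde{h}_{i}:E \to \mathbb{R}$, and set $L:=L_{\widetilde{h}}$ and $M:=\|\widetilde{h}\|_{L_{\infty}(E)}=\sup_{x\in E}\|\widetilde{h}(x)\|_{\infty}$, understood pointwise on $E$. The case $M=+\infty$ needs separate treatment (see below). Since $|\widetilde{h}_{i}(x)-\widetilde{h}_{i}(y)| \le \|\widetilde{h}(x)-\widetilde{h}(y)\|_{\infty} \le L\rho(x,y)$, every coordinate is $L$-Lipschitz on $E$, and $|\widetilde{h}_{i}| \le M$.

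For each $i$, I define the McShane extension
\begin{equation*}
g_{i}(x):=\inf_{y \in E}\bigl(\widetilde{h}_{i}(y)+L\rho(x,y)\bigr), \quad x \in \operatorname{X}.
\end{equation*}
The standard checks are as follows. If $M<\infty$, then $g_{i}(x) \ge -M - L\,\mathrm{dist}(x,E) > -\infty$. For $x \in E$, taking $y=x$ and using the Lipschitz bound gives $g_{i}=\widetilde{h}_{i}$ on $E$. Finally, $g_{i}$ is an infimum of $L$-Lipschitz functions that is finite everywhere, so it is $L$-Lipschitz. Next I truncate, $h_{i}:=\max\{-M,\min\{g_{i},M\}\}$. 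Truncation is $1$-Lipschitz on $\mathbb{R}$, so it preserves the Lipschitz constant $L$, it does not change values on $E$, and it forces $|h_{i}| \le M$.

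Put $h:=(h_{1},h_{2},\dots)$. Then $\|h(x)\|_{\infty}\le M$, so $h$ takes values in $l_{\infty}$, and $\|h(x)-h(y)\|_{\infty}=\sup_{i}|h_{i}(x)-h_{i}(y)| \le L\rho(x,y)$. Also $h|_{E}=\widetilde{h}$. This gives $L_{h} \le L_{\widetilde{h}}$, and the reverse inequality is immediate from restriction, so $L_{h}=L_{\widetilde{h}}$. In the same way $\|h\|_{L_{\infty}(\operatorname{X})} \le M$, and the reverse inequality holds because $h$ extends $\widetilde{h}$.

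The one delicate point is the sup-norm statement when $\widetilde{h}$ is unbounded, which can happen if $E$ is unbounded. In that case I skip the truncation and need the extension to land in $l_{\infty}$ without it. I handle this by fixing $y_{0}\in E$ and using
\begin{equation*}
|g_{i}(x)-\widetilde{h}_{i}(y_{0})| \le L\rho(x,y_{0}).
\end{equation*}
This inequality holds uniformly in $i$, and $\widetilde{h}(y_{0}) \in l_{\infty}$, so $h(x)=(g_{i}(x))_{i}$ lies in $l_{\infty}$. The same inequality gives the finiteness of $g_{i}(x)$ needed above. Both sup norms are then $+\infty$, so the norm equality holds trivially.

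A remaining issue is whether the norm is meant pointwise or essentially with respect to $\mu$. I would read it pointwise, as a supremum over points; with that reading, the construction above gives exactly the claimed equality.
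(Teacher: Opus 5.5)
Your argument is correct and is essentially the standard one the paper relies on; the paper gives no proof of its own, only a pointer to \S 4.1 of HKST, where the extension is likewise done coordinatewise by McShane's formula with the common constant $L_{\widetilde{h}}$, using that the $l_{\infty}$-norm is a coordinatewise supremum, and your truncation at level $\sup_{E}\|\widetilde{h}\|_{\infty}$ handles the norm condition. Your pointwise reading of $\|\cdot\|_{L_{\infty}}$ is the right one: it matches the paper's $\|\cdot\|_{\operatorname{BLIP}(E)}$, and with a $\mu$-essential supremum the norm equality can fail, e.g.\ when $\mu(E)=0$ and $\widetilde{h}\neq 0$.
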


\textbf{Important notation.} By $\operatorname{BLIP}(E)$ we denote the linear space of \textit{all bounded Lipschitz functions} from $E$ to $\mathbb{R}$
equipped with the $\sup$-norm, i.e., given $h \in \operatorname{BLIP}(E)$, we set
\begin{equation}
\label{eqq.sup_norm}
\|h\|_{\operatorname{BLIP}(E)}:=\sup\limits_{x \in E}|h(x)|.
\end{equation}
For each $L > 0$, we put $\operatorname{BLIP}_{L}(E):=\operatorname{BLIP}(E) \cap \operatorname{LIP}_{L}(E)$ and
consider it as a metric space equipped with the distance induced by the $\|\cdot\|_{\operatorname{BLIP}(E)}$-norm.
Finally, by  $\operatorname{LIP}^{\rm str}(E)$ we mean the space $\operatorname{BLIP}(E)$ equipped with the norm
\begin{equation}
\label{eqq.c_1_norm}
\|h\|_{\operatorname{LIP}^{\rm str }(E)}:=\|h\|_{\operatorname{BLIP}(E)}+L_{h}, \quad h \in \operatorname{BLIP}(E).
\end{equation}

\subsection{Admissible metric measure spaces}
Unfortunately, generic m.m.s. can be very tricky. In order to build a nice fruitful theory we impose some additional
assumptions that are quite popular in contemporary Geometric Analysis.

We recall that, given a m.m.s. $\operatorname{X}=(\operatorname{X},\rho,\mu)$, the measure $\mu$ is said to be
\textit{uniformly locally doubling} if, $\mu(B) > 0$ for any nonempty ball $B$ in $\operatorname{X}$, and, for each $R > 0$,
\begin{equation}
\label{eqq.doubling_measure}
\operatorname{D}_{\mu}(R):=\sup\limits_{x \in \operatorname{X}}\sup\limits_{r \in (0,R)}\frac{\mu(B_{2r}(x))}{\mu(B_{r}(x))} < +\infty.
\end{equation}

Given $p \in [1,\infty)$, we say that a m.m.s. $\operatorname{X}=(\operatorname{X},\rho,\mu)$ supports a \textit{weak local $(1,p)$-Poincar\'e} inequality if, for each
$R > 0$, there are constants $\operatorname{c}=\operatorname{c}(\operatorname{X},p,R) > 0$ and $\lambda=\lambda(\operatorname{X},p,R) > 0$ such that for each $f \in \operatorname{LIP}(\operatorname{X})$ it holds
\begin{equation}
\label{eqq.Poincare}
\begin{split}
&\fint\limits_{B_{r}(x)}\Bigl|f(y)-\fint\limits_{B_{r}(x)}f(z)\,d\mathfrak{m}(z)\Bigr|\,d\mu(y)\\
&\le \operatorname{c} r\Bigl(\fint\limits_{B_{\lambda r}(x)}\Bigl(\operatorname{lip}[f](y)\Bigr)^{p}\,d\mu(y)\Bigr)^{\frac{1}{p}} \quad \hbox{for all $(x,r) \in \operatorname{X}\times(0,R)$}.
\end{split}
\end{equation}

\begin{Def}
\label{Def.p_admissible}
Given $p \in [1,\infty)$, we say that a m.m.s. $\operatorname{X}=(\operatorname{X},\rho,\mu)$ is $p$-admissible and write $\operatorname{X} \in \mathcal{A}_{p}$ if
the metric space $(\operatorname{X},\rho)$ is complete, the measure $\mu$ is uniformly locally doubling, and the triple $(\operatorname{X},\rho,\mu)$ supports a weak local $(1,p)$-Poincar\'e inequality.
\end{Def}

We recall that, given $R > 0$, a metric space $(\operatorname{Z},\operatorname{d}_{\operatorname{Z}})$ is said to be \textit{geodesic at the scale $R$} if, for each pair of points $z_{1},z_{2} \in \operatorname{Z}$ with $\operatorname{d}_{\operatorname{Z}}(z_{1},z_{2}) \le R$ there is a curve joining $z_{1}$ and $z_{2}$ whose length is equal to the distance $\operatorname{d}_{\operatorname{Z}}(z_{1},z_{2})$.

Recall also that two metrics $\operatorname{d}_{1}$ and $\operatorname{d}_{2}$ in a set $\operatorname{Z}$ are said to be \textit{biLipschitz equivalent}
if there is a constant $C \geq 1$ such that
\begin{equation}
\label{eqq.bilipschitz_metric}
\frac{1}{C}\operatorname{d}_{2}(z_{1},z_{2}) \le \operatorname{d}_{1}(z_{1},z_{2}) \le C \operatorname{d}_{2}(z_{1},z_{2}) \quad \hbox{for every pair of points $z_{1},z_{2} \in \operatorname{Z}$}.
\end{equation}
Arguing as in the Corollary 8.3.16 and Lemma 8.3.18 from \cite{HKST} we get the following assertion.
\begin{Prop}
\label{Prop.Poincare_forgotten}
Let $p \in (1,\infty)$ and $\operatorname{X}=(\operatorname{X},\rho,\mu) \in \mathcal{A}_{p}$. Then, for each $R > 0$, there exists a geodesic at the scale $R > 0$ metric $\widetilde{\rho}$ on $\operatorname{X}$ such that
$\rho$ and $\widetilde{\rho}$ are biLipscgitz equivalent in $\operatorname{X}$ and $\widetilde{\operatorname{X}}=(\operatorname{X},\widetilde{\rho},\mu) \in \mathcal{A}_{p}$.
\end{Prop}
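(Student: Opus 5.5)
The plan is to take $\widetilde{\rho}$ to be the length metric $\rho_{\ell}$ induced by $\rho$, truncated at scale $R$ if necessary, and to check three things: $\rho \le \rho_{\ell} \le C\rho$, $\rho_\ell$ is geodesic at scale $R$, and $\mathcal{A}_{p}$ is preserved.

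\emph{Step 1 (quasiconvexity).} Fix $R>0$. By (A2) and (A3), $(\operatorname{X},\rho)$ is locally quasiconvex at bounded scales: there is $C=C(R)\ge 1$ such that any $x,y$ with $\rho(x,y)\le 2R$ can be joined by a rectifiable curve of length at most $C\rho(x,y)$. This is the localized form of Corollary 8.3.16 in \cite{HKST}. To prove it, I would fix $x$ and, for $\varepsilon>0$, consider the $\varepsilon$-chain distance $\phi_\varepsilon(y)$ from $x$ measured along curves restricted to a large ball. This function is $1$-Lipschitz with $\operatorname{lip}[\phi_\varepsilon]\le 1$. Iterating the Poincar\'e inequality \eqref{eqq.Poincare} over a telescoping chain of balls, together with the doubling property \eqref{eqq.doubling_measure}, gives the pointwise bound $|\phi(y)-\phi(x)|\le C\rho(x,y)$ at Lebesgue points. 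Finiteness of $\phi$ (connectedness) also follows from the Poincar\'e inequality.

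Rectifiable curves then exist by completeness (A1) and local properness. A complete, uniformly locally doubling space is proper on bounded sets, so Arzel\`a--Ascoli applies to almost-minimizing discrete chains.

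\emph{Step 2 (the metric).} Define $\widetilde{\rho}(x,y)$ as the infimum of the lengths of curves joining $x$ and $y$ when $\rho(x,y)\le R$. Then $\rho\le\widetilde\rho\le C\rho$ locally. Globally, I would take $\widetilde\rho:=\min\{\rho_\ell, C R\}$-type corrections, or simply use the length metric; the biLipschitz bound must also be checked at large distances. That check is where the local assumptions matter, and I would handle it as in Lemma 8.3.18 of \cite{HKST}. Properness together with Arzel\`a--Ascoli (Lemma 8.3.18 style) gives minimizing curves, hence geodesicity at scale $R$.

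\emph{Step 3 (stability of $\mathcal{A}_p$).} Completeness, Borel regularity and uniform local doubling pass to biLipschitz equivalent metrics, since $\widetilde{\rho}$-balls are sandwiched between $\rho$-balls of comparable radii. Lipschitz classes coincide and $\widetilde{\operatorname{lip}}[f]\le C\operatorname{lip}[f]$, so \eqref{eqq.Poincare} transfers with new constants $\operatorname{c},\lambda$ after enlarging balls by $C$ and using doubling.

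The main obstacle is Step 1, namely extracting quantitative local quasiconvexity from a merely local Poincar\'e inequality at a fixed scale. The standard global argument must be run inside balls of radius comparable to $R$ while keeping constants uniform in the center. I would first try the chain-of-balls argument applied to $\phi_\varepsilon$ truncated by $\min\{\phi_\varepsilon, M\}$.
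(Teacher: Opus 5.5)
Your route is the one the paper points to: its entire proof is the reference to Corollary 8.3.16 and Lemma 8.3.18 of \cite{HKST}. Steps 1 and 3 are fine in outline.

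The genuine gap is Step 2. You never define $\widetilde\rho$ on all pairs, and none of the options you list works:
\begin{itemize}
\item Prescribing $\widetilde\rho$ only when $\rho(x,y)\le R$ does not give a metric.
\item $\min\{\rho_\ell,CR\}$ is bounded, so it cannot be biLipschitz equivalent to $\rho$ on an unbounded $\operatorname{X}$.
\item The plain length metric $\rho_\ell$ need not be biLipschitz equivalent to $\rho$ either. Under merely local hypotheses, the quasiconvexity constant $\sigma(R)$ of Step 1 may blow up as $R\to\infty$.
\end{itemize}
For example, take the planar curve formed by the ray $[0,\infty)\times\{0\}$, the segment $\{0\}\times[0,1]$ and the graph of $t\mapsto 1+\sqrt t$, $t\ge 0$, with Euclidean distance and arclength measure. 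It lies in $\mathcal{A}_p$, with constants depending on the scale. Yet $\rho((L,0),(L,1+\sqrt L))=1+\sqrt L$, while $\rho_\ell((L,0),(L,1+\sqrt L))\ge 2L$. So Lemma 8.3.18 of \cite{HKST}, which rests on global quasiconvexity, cannot simply be imitated. This localization is exactly what the paper's ``arguing as in'' leaves implicit, and it needs an extra idea.

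One way to close it is as follows. Fix $R$, let $\sigma$ be the Step 1 constant at scale $R$, put $K:=\max\{\sigma,2\}$, and take the largest metric below $\min\{\rho_\ell,K\rho\}$:
\[
\widetilde\rho(x,y):=\inf\Bigl\{\sum_{i=1}^{N}\min\{\rho_\ell(x_{i-1},x_i),K\rho(x_{i-1},x_i)\}:\ N\in\mathbb{N},\ x_0=x,\ x_N=y\Bigr\}.
\]
This is a metric by concatenation of chains. Every summand is at least $\rho(x_{i-1},x_i)$, so $\rho\le\widetilde\rho\le K\rho$.

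Now suppose a chain has total cost $<KR$.
\begin{itemize}
\item A link with $\rho(x_{i-1},x_i)>R$ must contribute $\rho_\ell(x_{i-1},x_i)$, because $K\rho(x_{i-1},x_i)>KR$ exceeds the total.
\item A link with $\rho(x_{i-1},x_i)\le R$ contributes $\rho_\ell(x_{i-1},x_i)$ anyway, by quasiconvexity.
\end{itemize}
So the cost is $\sum_i\rho_\ell(x_{i-1},x_i)\ge\rho_\ell(x,y)$, and hence $\widetilde\rho=\rho_\ell$ on $\{\widetilde\rho<KR\}\supset\{\widetilde\rho\le R\}$. It follows that $\widetilde\rho$-lengths of curves coincide with $\rho$-lengths. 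Your Arzel\`a--Ascoli argument in the proper space $\operatorname{X}$ then yields, whenever $\widetilde\rho(x,y)\le R$, a curve of $\widetilde\rho$-length exactly $\widetilde\rho(x,y)$.

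A smaller point in Step 3: to transfer \eqref{eqq.Poincare} you need $\operatorname{lip}[f]\le K\,\widetilde{\operatorname{lip}}[f]$. This follows because the $\rho$-ball of radius $r$ lies in the $\widetilde\rho$-ball of radius $Kr$. The inequality $\widetilde{\operatorname{lip}}[f]\le C\operatorname{lip}[f]$ you cite is true but is the wrong direction for this purpose.
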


\subsection{Sobolev functions}
We recall basic terminology concerning modern theory of scalar-valued Sobolev functions following approach given in \cite{GT} (see also \cite{AGS,AmbPas} for the comprehensive treatment of the subject and equivalent approaches).

We fix a m.m.s. $\operatorname{X}=(\operatorname{X},\rho,\mu)$ and $p \in (1,\infty)$. Assume that $(\operatorname{X},\rho)$ is \textit{complete and separable}.
Let $p'$ be the conjugate H\"older exponent, i.e. $\frac{1}{p}+\frac{1}{p'}=1$.
We shall denote by $C([0,1], \operatorname{X}])$ the (complete and separable) space of continuous curves in $\operatorname{X}$ defined on $[0,1]$
equipped with the ``sup''-distance. Given $t \in [0,1]$, the evaluation map $e_{t}: C([0,1],\operatorname{X}) \to \operatorname{X}$ is defined
by $e_{t}(\gamma):=\gamma(t)$, $\gamma \in C([0,1],\operatorname{X})$.
A curve $\gamma \in C([0,1],\operatorname{X})$ is said to be \textit{absolutely continuous} provided there is $f \in L_{1}([0,1])$ such that
\begin{equation}
\notag
\rho(\gamma(t_{1}),\gamma(t_{2})) \le \int\limits_{t_{1}}^{t_{2}}f(s)\,ds \quad \hbox{for all $0 \le t_{1} \le t_{2} \le 1$.}
\end{equation} 
The least - in the $\mathcal{L}^{1}$-a.e. sense - such $f$
is called the \textit{metric speed of $\gamma$} and denoted by $|\dot{\gamma}|$. In what
follows, we put $\int_{0}^{1}|\dot{\gamma}|(t)\,dt = +\infty$ if $\gamma$ is not absolutely continuous.

Given $q \in (1,\infty)$, a Borel probability measure $\pi$ on $C([0,1],\operatorname{X})$ is said to be a \textit{$q$-test plan} on $\operatorname{X}$ if there is a constant $C > 0$
such that $(e_{t})_{\sharp}\pi \le C \mu$ for all $t \in [0,1]$ and
\begin{equation}
\notag
\int\int\limits_{0}^{1}|\dot{\gamma}|^{q}(t)\,dt\,d\pi(\gamma) < +\infty.
\end{equation}

\begin{Def}
\label{Def.Sobolev_function}
Given $p \in (1,\infty)$, we say that $f \in L_{p}(\operatorname{X})$ belongs to the Sobolev space $W_{p}^{1}(\operatorname{X})$ provided there exists
a nonnegative function $G \in L_{p}(\operatorname{X})$ such that
\begin{equation}
\notag
\int|f(\gamma(1))-f(\gamma(0))|\,d\pi(\gamma) \le \int\int\limits_{0}^{1}G(\gamma(t))|\dot{\gamma}|(t)\,dt\,d\pi(\gamma) \quad \hbox{for each $p'$-test plan $\pi$}.
\end{equation}
Any such $G$ is called $p$-weak upper gradient of $f$.
\end{Def}

\begin{Remark}
\label{Rem.dependence_on_p}
It is easy to show that there exists a \textit{minimal} (in the $\mu$-a.e. sense) \textit{$p$-weak upper gradient} $G$ of $f$ (see \cite{AGS} for details), which will be denoted by $|Df|_{p}$.
The index $p$ in the notation $|Df|_{p}$ shows the dependence of the minimal $p$-weak upper gradient of $f$ (see \cite{DiMarinoSpeight15} for details).
\end{Remark}

\subsection{Sobolev maps}
In this section we briefly recall several approaches to Sobolev metric-valued maps following the terminology introduced in our
joint paper with N.~Gigli \cite{GT}.
Unless otherwise is stated, throughout the section we assume that $\operatorname{X}=(\operatorname{X},\rho,\mu)$ is a complete separable m.m.s. and
$\operatorname{Y}=(\operatorname{Y},\operatorname{d},\underline{y})$ is a pointed complete space.

We start with the \textit{Ambrosio--Reshetnyak approach} to Sobolev mappings.
\begin{Def}
\label{Def.Resh_Sob}
Given $p \in (1,\infty)$, an element $u \in L_{p}(\operatorname{X},\operatorname{Y})$ belongs to the Ambrosio--Reshetnyak--Sobolev class
$W_{p}^{1}(\operatorname{X},\operatorname{Y})$ provided there is a nonnegative $G \in L_{p}(\operatorname{X})$
such that for every $h \in \operatorname{LIP}_{1}(\operatorname{Y})$ it holds $h \circ  u \in W_{p}^{1}(\operatorname{X})$ and
\begin{equation}
\label{eqq.Resh_Sob}
|D(h \circ u)|_{p}(x) \le G(x) \quad \hbox{for $\mu$-a.e. $x \in \operatorname{X}$}.
\end{equation}
Any such $G$ is called $p$-weak upper gradient of $u$.
\end{Def}

\begin{Remark}
\label{Rem.dependence_on_p'}
It is easy to see that there exists a minimal (in the $\mu$-a.e. sense) \textit{$p$-weak upper gradient} $G$ of $u$, which will be denoted by $|Du|_{p}$.
In fact, $|Du|_{p}$ is nothing but the essential supremum of $|D(h\circ u)|_{p}$ as $h$ varies among all $h \in \operatorname{LIP}_{1}(\operatorname{Y})$. The presence of
index $p$ in our notation is justified by the same reason as in the scalar-valued case (see Remark \ref{Rem.dependence_on_p}). One can show though that in the case when
$\operatorname{X}$ is an open set in $\mathbb{R}^{n}$ (equipped with the Euclidean distance and the corresponding Lebesgue measure $\mathcal{L}^{n}$), for each
$p,q \in (1,\infty)$ and any $u \in W^{1}_{p}(\operatorname{X},\operatorname{Y}) \cap W^{1}_{q}(\operatorname{X},\operatorname{Y})$ we have $|Du|_{p} = |Du|_{q}$ in the $\mathcal{L}^{n}$-a.e. sense.
\end{Remark}

\begin{Remark}
\label{Rem.Sobolev_forgotten}
It is easy to see that if $\widetilde{\operatorname{X}}=(\operatorname{X},\widetilde{\rho},\mu)$ and the metrics $\rho,\widetilde{\rho}$ are biLipschitz equivalent on $\operatorname{X}$, then
$W_{p}^{1}(\operatorname{X},\operatorname{Y}) = W_{p}^{1}(\widetilde{\operatorname{X}},\operatorname{Y})$ as sets. Furthermore, if $\operatorname{I}:\operatorname{X} \to \widetilde{\operatorname{X}}$ is the
identical map, then for each $u \in W_{p}^{1}(\operatorname{X},\operatorname{Y})$ we have $\widetilde{u}:=u \circ \operatorname{I}^{-1} \in W_{p}^{1}(\widetilde{\operatorname{X}},\operatorname{Y})$ and
\begin{equation}
\label{eqq.Sobolev_bilipschitz}
\frac{1}{C}|D\widetilde{u}|_{p}(x) \le |Du|_{p}(x) \le C |D\widetilde{u}|_{p}(x) \quad \hbox{for $\mu$-a.e. $x \in \operatorname{X}$},
\end{equation}
where the constant $C > 0$ in \eqref{eqq.Sobolev_bilipschitz} is the same as in \eqref{eqq.bilipschitz_metric}.
\end{Remark}

We continue with the \textit{Hajlasz approach} to Sobolev maps.
\begin{Def}
\label{Def.Hajlasz_Sobolev}
Given $p \in (1,\infty)$, an element $u \in L_{p}(\operatorname{X},\operatorname{Y})$ belongs to the Hajlasz--Sobolev class
$\operatorname{HS}^{1}_{p}(\operatorname{X},\operatorname{Y})$ if for each $R > 0$ there are a nonnegative $G_{R}[u] \in L_{p}(\operatorname{X})$ and a set
$N_{R}[ u] \subset \operatorname{X}$ such that $\mu(N_{R}[u]) = 0$ and
\begin{equation}
\begin{split}
\label{eqq.Hajlasz_Sobolev}
&\operatorname{d}(u(x_{1}),u(x_{2})) \\
&\le \rho(x_{1},x_{2})\Bigl(G_{R}[u](x_{1})+G_{R}[u](x_{2})\Bigr) \quad \hbox{for all $x_{1},x_{2} \in \operatorname{X} \setminus N_{R}[u]$ with $\rho(x_{1},x_{2}) \le R$}.
\end{split}
\end{equation}
Any such $G_{R}[u] $ is called Hajlasz gradient
of $u$ at scale $R$.
\end{Def}

To describe the third approach to Sobolev maps we should recall the concept of the \textit{energy} of a $p$-integrable map.
\begin{Def}
\label{Def.energy}
Let $p \in [1,+\infty]$ and $u \in L_{p}(\operatorname{X},\operatorname{Y})$. Given $r > 0$ and $\underline{x} \in \operatorname{X}$, the $p$-energy of $u$ on a ball $B_{r}(\underline{x})$ (in the Korevaar--Sch\"oen sense)
is defined by the equality
\begin{equation}
\operatorname{e}_{p,r}[u](\underline{x}):=
\begin{cases}
\Bigl(\fint\limits_{B_{r}(\underline{x})}\frac{\operatorname{d}^{p}(u(x'), u(\underline{x}))}{r^{p}}\,d\mu(x')\Bigr)^{\frac{1}{p}}, \quad \hbox{if $p < +\infty$};\\
\operatorname{ess}\sup\limits_{x' \in \operatorname{X}}\frac{\operatorname{d}(u(x'), u(\underline{x}))}{r}, \quad \hbox{if $p = +\infty$}.
\end{cases}
\end{equation}
Furthermore, the \textit{lower} and the \textit{upper pointwise $p$-energy} of $u$ at $\underline{x}$ is defined by the equalities
\begin{equation}
\label{eqq.energy_densities}
\underline{\operatorname{e}}_{p}[ u](\underline{x}):=\varliminf\limits_{r \to 0}\operatorname{e}_{p,r}[ u](\underline{x}), \quad \overline{\operatorname{e}}_{p}[ u](\underline{x}):=\varlimsup\limits_{r \to 0}\operatorname{e}_{p,r}[ u](\underline{x}).
\end{equation}
\end{Def}

\begin{Remark}
\label{Rem.energy_variation}
In the particular case when $p=+\infty$, and the measure $\mu$ gives strictly positive mass to any nonempty ball, given
$u \in \operatorname{LIP}(\operatorname{X},\operatorname{Y})$, for each $\underline{x} \in \operatorname{X}$ and each $r > 0$, the energy $\operatorname{e}_{\infty,r}[u](\underline{x})$ has the same flavour as
the variation of $u$ on the ball $B_{r}(\underline{x})$ defined in \eqref{eqq.variation_on_a_ball'}.
\end{Remark}

\begin{Remark}
Using the same ideas as in \cite{KlMac} it is easy to show that, given $p \in (1,\infty)$ and $u \in L_{p}(\operatorname{X},\operatorname{Y})$, the
maps $x \mapsto \underline{\operatorname{e}}_{p}[u](\underline{x})$ and $x \mapsto \overline{\operatorname{e}}_{p}[u](\underline{x})$ (considered as mappings between $\operatorname{X}$ and $[0,+\infty]$) are Borel.
\end{Remark}

Now we are ready to formulate the \textit{Korevaar--Sch\"oen approach} to Sobolev mappings.
\begin{Def}
\label{Def.Korevaar_Schoen}
Let $\operatorname{X}=(\operatorname{X},\rho,\mu)$ be a m.m.s., let $\operatorname{Y}=(\operatorname{Y},\operatorname{d},\underline{y})$ be a pointed complete metric space.
Given $p \in (1,\infty)$, an element $ u \in L_{p}(\operatorname{X},\operatorname{Y})$ belongs to $\operatorname{KS}_{p}^{1}(\operatorname{X},\operatorname{Y})$ provided that
\begin{equation}
\label{eqq.KS_total_energy}
\operatorname{E}_{p}[ u]:=\varliminf\limits_{r \to 0}\int\limits_{\operatorname{X}}\bigl(\operatorname{e}_{p,r}[ u](x)\bigr)^{p}\,d\mu(x) < +\infty.
\end{equation}
\end{Def}

It is remarkable that different approaches to Sobolev mappings in fact coincide \cite{GT}.
\begin{Prop}
\label{Prop.different_approaches}
Given $p \in (1,\infty)$, for each $\operatorname{X} \in \mathcal{A}_{p}$, it holds
\begin{equation}
\label{eqq.equality_approaches_to_Sobolev}
W_{p}^{1}(\operatorname{X},\operatorname{Y})=\operatorname{KS}_{p}^{1}(\operatorname{X},\operatorname{Y})=\operatorname{HS}_{p}^{1}(\operatorname{X},\operatorname{Y}),
\end{equation}
where the equalities should be interpreted as a coincidence of the sets.
 \end{Prop}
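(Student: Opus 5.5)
Proposition \ref{Prop.different_approaches} is a result from \cite{GT}, and I would prove it by establishing the chain of inclusions
\[
\operatorname{HS}_{p}^{1}(\operatorname{X},\operatorname{Y})\subset \operatorname{KS}_{p}^{1}(\operatorname{X},\operatorname{Y})\subset W_{p}^{1}(\operatorname{X},\operatorname{Y})\subset \operatorname{HS}_{p}^{1}(\operatorname{X},\operatorname{Y}).
\]
By Proposition \ref{Prop.Poincare_forgotten} and Remark \ref{Rem.Sobolev_forgotten} we may assume from the start that $\rho$ is geodesic at a fixed scale $R$. All three classes are stable under biLipschitz changes of metric with the same measure, the doubling constants changing only in a controlled way.

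\textbf{Step 1: $\operatorname{HS}\subset\operatorname{KS}$.} This is elementary. Fix $R$ and a Haj{\l}asz gradient $G=G_{R}[u]$. For $r<R$ and $x'\in B_{r}(x)$ we have
\[
\operatorname{d}(u(x'),u(x))\le r\bigl(G(x)+G(x')\bigr).
\]
Hence
\[
\int_{\operatorname{X}}\operatorname{e}_{p,r}^{p}[u]\,d\mu\lesssim \int_{\operatorname{X}} G^{p}\,d\mu+\int_{\operatorname{X}}\fint_{B_{r}(x)}G^{p}(x')\,d\mu(x')\,d\mu(x).
\]
Fubini and local doubling (which gives $\mu(B_{r}(x))\simeq\mu(B_{r}(x'))$ when $\rho(x,x')<r$) bound the last term by $C\|G\|_{p}^{p}$. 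This gives $\operatorname{E}_{p}[u]<\infty$.

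\textbf{Step 2: $\operatorname{KS}\subset W$.} Fix $h\in\operatorname{LIP}_{1}(\operatorname{Y})$. Then pointwise $\operatorname{e}_{p,r}[h\circ u]\le \operatorname{e}_{p,r}[u]$. Choose $r_{n}\to0$ realizing the liminf in \eqref{eqq.KS_total_energy}. The functions $\operatorname{e}_{p,r_{n}}[u]$ are bounded in $L_{p}$, so along a subsequence they converge weakly to some $G\in L_{p}$, and this $G$ is independent of $h$.

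The key scalar fact is that for $f\in L_{p}$ with $\operatorname{e}_{p,r_{n}}[f]\le g_{n}$ and $g_{n}\rightharpoonup G$, one has $f\in W_{p}^{1}$ with $|Df|_{p}\le CG$. I would prove it by discrete convolution. Take a partition of unity subordinate to a covering by balls of radius $r_{n}$ and form the Lipschitz approximations
\[
f_{n}=\sum_{i}\Bigl(\fint_{B_{i}}f\,d\mu\Bigr)\varphi_{i}.
\]
Their upper gradients are bounded by $C\,\fint_{B_{Cr_{n}}}\operatorname{e}_{p,Cr_{n}}[f]$, i.e. by averaged energies. These averages still converge weakly to (a constant multiple of) $G$. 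Lower semicontinuity of weak upper gradients under $L_{p}$ convergence $f_{n}\to f$ with weakly converging gradients then finishes the argument.

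Applying this to $f=h\circ u$ for every $h$ yields a common majorant $CG$, so $u\in W_{p}^{1}$. A uniform constant $C$ is harmless here, since only membership of $u$ is claimed.

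\textbf{Step 3: $W\subset \operatorname{HS}$.} This is the step I expect to be the main obstacle. Take $|Du|_{p}$. For each $h\in\operatorname{LIP}_{1}(\operatorname{Y})$, the Poincar\'e inequality \eqref{eqq.Poincare} (extended to Sobolev functions, where $p>1$ and doubling make $\operatorname{lip}$ replaceable by $|D\cdot|_{p}$) combined with the standard telescoping argument gives, at Lebesgue points $x_{1},x_{2}$,
\[
|h(u(x_{1}))-h(u(x_{2}))|\le C\rho(x_{1},x_{2})\Bigl(\operatorname{M}^{\le CR}_{q}[|Du|_{p}](x_{1})+\operatorname{M}^{\le CR}_{q}[|Du|_{p}](x_{2})\Bigr).
\]
Here $q<p$ comes from the Keith--Zhong self-improvement of the Poincar\'e inequality, which is what guarantees $\operatorname{M}_{q}[|Du|_{p}]\in L_{p}$.

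The difficulty is the exceptional null set, which depends on $h$ while $\operatorname{LIP}_{1}(\operatorname{Y})$ is uncountable. I would handle it by using separability of the essential range of $u$. Pick a countable dense set $\{y_{k}\}$ in the range and use $h_{k}=\operatorname{d}(\cdot,y_{k})$. Then
\[
\operatorname{d}(u(x_{1}),u(x_{2}))=\sup_{k}|h_{k}(u(x_{1}))-h_{k}(u(x_{2}))|
\]
off a countable union of null sets, exactly in the spirit of the Fr\'echet embedding of Lemma \ref{Lm.Frechet}. Taking $G_{R}[u]:=C\operatorname{M}^{\le CR}_{q}[|Du|_{p}]$ then gives \eqref{eqq.Hajlasz_Sobolev}.
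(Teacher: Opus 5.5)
Your cycle $\operatorname{HS}_{p}^{1}\subset\operatorname{KS}_{p}^{1}\subset W_{p}^{1}\subset\operatorname{HS}_{p}^{1}$ is correct, up to one scale repair in Step 2. The paper gives no argument for this proposition; it imports it from \cite{GT}. Your reconstruction is self-contained and uses only tools the paper deploys elsewhere:
the Keith--Zhong improvement together with energy density of Lipschitz functions, as in the proof of Proposition \ref{Prop.Sobolev_imb}; the maximal inequality of Proposition \ref{Prop.maximal_function} with $q<p$, which gives $G_{R}[u]\in L_{p}(\operatorname{X})$; and a weak-closure argument of the kind behind Proposition \ref{Prop.weak_convergence}.

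The repair concerns the scale of the discrete convolution. With a partition of unity subordinate to balls of radius $r_{n}$, the slope of $f_{n}$ is controlled by $\operatorname{e}_{p,Cr_{n}}[f]$. Your hypothesis ($\operatorname{e}_{p,r_{n}}[f]\le g_{n}$ with $g_{n}\rightharpoonup G$) concerns radius $r_{n}$ only, so it does not control that quantity. Instead, build the covering at scale $r_{n}/c$ for a fixed absolute constant $c$ (e.g.\ $c=6$). Then bounded overlap and doubling give $\operatorname{lip}[f_{n}](x)\le C\fint_{B_{r_{n}}(x)}\operatorname{e}_{p,r_{n}}[f]\,d\mu$ and $\|f_{n}-f\|_{L_{p}(\operatorname{X})}\le Cr_{n}\|\operatorname{e}_{p,r_{n}}[f]\|_{L_{p}(\operatorname{X})}\to 0$.

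Two smaller points. First, duality and doubling only show that weak limits of the averaged energies are dominated by $C\,G$, not that they equal a multiple of $G$. Domination is all you need. In fact $\fint_{B_{r_{n}}(\cdot)}\operatorname{e}_{p,r_{n}}[u]\,d\mu$ does not depend on $h$, so any weak subsequential limit of it is already a common majorant for all $h\circ u$.

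Second, in Step 3 the countable family $\{h_{k}\}$ is harmless but unnecessary. Apply Lemma \ref{Lm.Lebesgue_point} to $u$ itself: outside a single null set, every point is simultaneously a Lebesgue point of all $h\circ u$. You can then plug in $h=\operatorname{d}(\cdot,u(x_{1}))$ directly, normalized to vanish at $\underline{y}$ so that $h\circ u\in L_{p}(\operatorname{X})$.
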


\section{Main technical tools}
This special section devoted to numerous auxiliary concepts and assertions that will be important in proving main results of this paper.
While the most part of them is rather standard in the contemporary Geometric Analysis, yet there are some delicate observations
whose precise statements are difficult to find in the literature.

\subsection{Maximal functions} In this section we fix a metric measure space $\operatorname{X}=(\operatorname{X},\rho,\mu)$.
We recall the following folklore fact.
\begin{Lm}
\label{Lm.Lebesgue_point}
Let $\operatorname{Y}=(\operatorname{Y},\operatorname{d},\underline{y})$ be a pointed complete space.
Given $p \in [1,\infty)$, if $\mu$ is uniformly locally doubling, then, for each $u \in L_{p}^{\rm loc}(\operatorname{X},\operatorname{Y})$,
\begin{equation}
\label{eqq.Lebesgue}
\lim\limits_{r \to 0}\fint\limits_{B_{r}(x)}\operatorname{d}^{p}(u(x),u(y))\,d\mu(y) = 0 \quad \hbox{for $\mu$-a.e. $x \in \operatorname{X}$}.
\end{equation}
\end{Lm}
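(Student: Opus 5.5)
The plan is to reduce to the classical scalar Lebesgue differentiation theorem for doubling measures by exploiting the separability of the range of $u$. Since $u \in L_{p}^{\rm loc}(\operatorname{X},\operatorname{Y})$ has, by definition, a Borel representative with separable range, we fix a countable dense set $\{y_{n}\}_{n \in \mathbb{N}}$ in the closure of $u(\operatorname{X})$. For each $n$ consider the scalar function $f_{n}(x):=\operatorname{d}^{p}(u(x),y_{n})$. By the triangle inequality $\operatorname{d}(u(x),y_{n}) \le \operatorname{d}(u(x),\underline{y})+\operatorname{d}(\underline{y},y_{n})$, and since $\mu$ gives finite mass to bounded sets, each $f_{n}$ belongs to $L_{1}^{\rm loc}(\operatorname{X})$.

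Next we invoke the Lebesgue differentiation theorem for the uniformly locally doubling measure $\mu$. It is proved in the standard way: first the local Hardy--Littlewood maximal operator $\sup_{r<R}\fint_{B_{r}(x)}|f|\,d\mu$ is of weak type $(1,1)$ on bounded sets, which follows from the $5B$-covering lemma together with $\operatorname{D}_{\mu}(5R)<\infty$. Then one uses density of continuous functions in $L_{1}^{\rm loc}$, which holds because $\mu$ is Borel regular on a separable metric space. The only care needed is that doubling is only local and uniform, so the covering argument is carried out at radii below a fixed $R$, and the weak-type estimate is localized to a ball $B$ with $f$ replaced by $f\chi_{2B}$. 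This gives a $\mu$-null set $N_{n}$ off which $\lim_{r\to 0}\fint_{B_{r}(x)}|f_{n}(y)-f_{n}(x)|\,d\mu(y)=0$. We then set $N:=\bigcup_{n} N_{n}$, which is still $\mu$-null.

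Finally we pass from the $f_{n}$ to the claim. Fix $x \notin N$ and $\varepsilon>0$, and choose $n$ with $\operatorname{d}(u(x),y_{n})<\varepsilon$. By the elementary inequality $(a+b)^{p}\le 2^{p-1}(a^{p}+b^{p})$,
\[
\operatorname{d}^{p}(u(x),u(y)) \le 2^{p-1}\bigl(\varepsilon^{p}+f_{n}(y)\bigr).
\]
Averaging over $B_{r}(x)$ and letting $r\to 0$, the Lebesgue point property gives $\fint_{B_{r}(x)} f_{n}\,d\mu \to f_{n}(x)<\varepsilon^{p}$. Hence the $\limsup$ of the left-hand side of \eqref{eqq.Lebesgue} is at most $2^{p}\varepsilon^{p}$, and since $\varepsilon$ is arbitrary the limit is $0$. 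The main, though standard, obstacle is the localized weak-type maximal inequality under merely uniformly local doubling. Everything else is the countable dense-set trick, which is exactly where separability of the range is used.
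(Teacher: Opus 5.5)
Your proof is correct and follows essentially the same route as the paper's sketch: you reduce to the scalar Lebesgue differentiation theorem (which the paper simply cites from HKST) through the distance functions to a countable dense subset of the separable range, and then approximate $u(x)$ by a suitable $y_{n}$. The only difference is minor: you differentiate $\operatorname{d}^{p}(u,y_{n})$ directly and conclude with the triangle inequality, which avoids the paper's intermediate steps through the inequality $b^{p}-a^{p}\ge(b-a)^{p}$ and the Fr\'echet embedding.
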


\begin{proof}
We present only a sketch of the proof.
If $\operatorname{Y}=(\mathbb{R},|\cdot|,0)$ and $p=1$, then the proof can be found in Section 3.4 of \cite{HKST}. The case $p > 1$ follows easily, if one takes into account the inequality $b^{p}-a^{p} \geq (b-a)^{p}$
for $b \geq a \geq 0$. In the general case one might fix an arbitrary Borel
representative $\underline{u}$ of $u$ with separable range, use Lemma \ref{Lm.Frechet} and apply scalar-valued arguments to functions $\operatorname{d}(u,y_{n})$, $n \in \mathbb{N}$ for a countable
set $\{y_{n}\}_{n=1}^{\infty}$ which is dense in $u(\operatorname{X})$.
\end{proof}

\begin{Remark}
\label{Rem.differentiation_integarl}
It follows immediately from Lemma \ref{Lm.Lebesgue_point} that, given $f \in L_{1}^{\rm loc}(\operatorname{X})$, for $\mu$-a.e. $x \in \operatorname{X}$ there exists
\begin{equation}
\label{eqq.differentiability_of_integral}
\lim\limits_{r \to 0}\fint\limits_{B_{r}(x)}f(y)\,d\mu(y) = f(x).
\end{equation}
In particular, given a measurable set $E \subset \operatorname{X}$, $\mu$-a.e. point $x \in E$ is a density point of $E$ and
\begin{equation}
\label{eqq.differentiability_of_integral'}
\lim\limits_{r \to 0}\frac{1}{\mu(B_{r}(x))}\int\limits_{B_{r}(x)\setminus E}f(y)\,d\mu(y) = 0.
\end{equation}
\end{Remark}

The proof of the following assertion repeats almost verbatim the corresponding arguments from the proof of Lemma 8.1.13 in \cite{HKST}.
We omit elementary modifications.
\begin{Lm}
\label{Lm.volume_decay}
If $\mu$ is uniformly locally doubling, then, given $R > 0$,
there exist constants $\theta > 0$ and $C > 0$ depending only on $\operatorname{D}_{\mu}(2R)$ such that, for each $0 < s \le r < R$, each $x \in \operatorname{X}$, and each $y \in B_{r}(x)$, it holds
\begin{equation}
\label{eqq.volume_decay}
\frac{\mu(B_{s}(y))}{\mu(B_{r}(x))} \geq C \Bigl(\frac{s}{r}\Bigr)^{\theta}.
\end{equation}
By $\underline{\theta}_{\mu}$ we denote the infimum of all $\theta$ for which \eqref{eqq.volume_decay} holds.
\end{Lm}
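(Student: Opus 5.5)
The plan is to follow the standard chaining argument, as in Lemma 8.1.13 of \cite{HKST}, using only the uniform local doubling constant at scale $2R$. Fix $R>0$, $x\in\operatorname{X}$, $0<s\le r<R$ and $y\in B_{r}(x)$. The first step is to compare $B_{r}(x)$ with a ball centred at $y$. Since $\rho(x,y)<r$, we have $B_{r}(x)\subset B_{2r}(y)$. Hence
\[
\mu(B_{r}(x))\le\mu(B_{2r}(y))\le \operatorname{D}_{\mu}(2R)\,\mu(B_{r}(y)),
\]
which is legitimate because $r<R<2R$. So it suffices to bound $\mu(B_{r}(y))/\mu(B_{s}(y))$ from above by a power of $r/s$, at the cost of one extra factor $\operatorname{D}_{\mu}(2R)$.

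The second step is to iterate doubling at the centre $y$. Let $m\in\mathbb{N}\cup\{0\}$ be the least integer with $2^{m}s\ge r$, so that $2^{m}<2r/s$. For each $i=0,\dots,m-1$ the radius $2^{i}s$ is below $r<R$. The doubling condition \eqref{eqq.doubling_measure} with $D:=\operatorname{D}_{\mu}(2R)$ (which is at least $\operatorname{D}_{\mu}(R)$ and finite) then gives
\[
\mu(B_{r}(y))\le\mu(B_{2^{m}s}(y))\le D^{m}\mu(B_{s}(y)).
\]
Writing $D^{m}=(2^{m})^{\log_{2}D}\le (2r/s)^{\log_{2}D}$ and combining with the first step yields
\[
\frac{\mu(B_{s}(y))}{\mu(B_{r}(x))}\ge D^{-1}\,(2r/s)^{-\log_{2}D}=D^{-2}\Bigl(\frac{s}{r}\Bigr)^{\theta},
\]
with $\theta:=\log_{2}D$ and $C:=D^{-2}$. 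Both constants depend only on $\operatorname{D}_{\mu}(2R)$. Positivity of $\mu$ on balls guarantees that all quotients make sense. The infimum $\underline{\theta}_{\mu}$ is then just a definition.

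The only point needing care, and the main (mild) obstacle, is bookkeeping of scales. Every radius at which doubling is applied must lie in $(0,2R)$; this is why the constant is stated in terms of $\operatorname{D}_{\mu}(2R)$ rather than $\operatorname{D}_{\mu}(R)$. If one prefers to apply doubling up to radius $2r<2R$ (for instance when passing from $B_{r}(y)$ to $B_{2r}(y)$), the hypothesis $r<R$ guarantees this stays within the allowed range.
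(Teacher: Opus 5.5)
Your proof is correct and is essentially the argument the paper defers to, namely the standard chaining proof of Lemma 8.1.13 in \cite{HKST} adapted to the local doubling constant, and you get the same constants $C=D^{-2}$, $\theta=\log_{2}D$ with $D=\operatorname{D}_{\mu}(2R)$. One cosmetic point: $\log_{2}D$ vanishes when $D=1$, but since $s/r\le 1$ you may enlarge $\theta$ to any positive value, so the requirement $\theta>0$ costs nothing (and since every radius you double at is below $R$, $\operatorname{D}_{\mu}(R)$ would already suffice).
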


\begin{Remark}
\label{Rem.volume_decay}
Assume that $\operatorname{X}=(\operatorname{X},\rho,\mu)$ is such that the metric space $(\operatorname{X},\rho)$ is geodesic at the scale $R$ for some $R > 0$
and $\mu$ is uniformly locally doubling. Then, repeating arguments
from the proof of Proposition 11.5.3 in \cite{HKST} we obtain that for each  there are $C > 0$ and $\beta \in (0,1]$ such that
\begin{equation}
\mu(B_{r}(x) \setminus B_{r(1-\epsilon)}(x)) \le C \epsilon^{\beta}\mu(B_{r}(x)) \quad \hbox{for each $\epsilon \in (0,1]$ and each $(x,r) \in \operatorname{X} \times (0,R)$}.
\end{equation}
\end{Remark}

We will also use the following elementary assertion.
\begin{Lm}
\label{Lm.differentiation}
If the measure $\mu$ is uniformly locally doubling and $f \in L_{1}^{\rm loc}(\operatorname{X})$, then for each $\tau > 0$, it holds
\begin{equation}
\notag
\lim\limits_{r \to 0}\sup\limits_{x' \in B_{r}(x)}\fint\limits_{B_{\tau r}(x')}f(z)\,d\mu(z) = f(x) \quad \hbox{for $\mu$-a.e. $x \in \operatorname{X}$}.
\end{equation}
\end{Lm}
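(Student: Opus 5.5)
The plan is to reduce the statement to the Lebesgue differentiation theorem (Remark \ref{Rem.differentiation_integarl}), using the doubling property to compare the balls $B_{\tau r}(x')$ with a single ball centered at $x$. The inequality ``$\geq$'' is immediate: taking $x'=x$ gives
\[
\sup_{x'\in B_{r}(x)}\fint_{B_{\tau r}(x')}f\,d\mu \ \geq\ \fint_{B_{\tau r}(x)}f\,d\mu \ \to\ f(x)
\]
for $\mu$-a.e. $x$. Hence $\varliminf_{r\to 0}$ of the supremum is at least $f(x)$.

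\emph{Upper bound.} The work is in showing that the $\varlimsup$ is at most $f(x)$. I fix a Lebesgue point $x$, in the strong sense of Lemma \ref{Lm.Lebesgue_point} applied with $\operatorname{Y}=\mathbb{R}$ and $p=1$, so that
\[
\fint_{B_{s}(x)}|f(z)-f(x)|\,d\mu(z)\to 0 \quad \hbox{as } s\to 0.
\]
For $x'\in B_{r}(x)$ we have $B_{\tau r}(x')\subset B_{(1+\tau)r}(x)$. I then estimate
\[
\Bigl|\fint_{B_{\tau r}(x')}f\,d\mu-f(x)\Bigr| \le \fint_{B_{\tau r}(x')}|f-f(x)|\,d\mu \le \frac{\mu(B_{(1+\tau)r}(x))}{\mu(B_{\tau r}(x'))}\fint_{B_{(1+\tau)r}(x)}|f-f(x)|\,d\mu .
\]
This bound is actually two-sided, so it gives the full limit, not just the upper half.

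\emph{Volume ratio.} The only obstacle is a uniform bound on the ratio $\mu(B_{(1+\tau)r}(x))/\mu(B_{\tau r}(x'))$ over $x'\in B_{r}(x)$ and small $r$. For this I apply Lemma \ref{Lm.volume_decay} with $s=\tau r$, with the ball radius $(1+\tau)r$ in place of $r$, and with $y=x'\in B_{(1+\tau)r}(x)$. We have $s\le(1+\tau)r$ automatically, and $(1+\tau)r<R$ as soon as $r<R/(1+\tau)$, say for $R=1$. The lemma then gives
\[
\frac{\mu(B_{\tau r}(x'))}{\mu(B_{(1+\tau)r}(x))}\geq C\Bigl(\frac{\tau}{1+\tau}\Bigr)^{\theta},
\]
with constants independent of $x'$ and $r$. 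Alternatively, the same bound follows by iterating \eqref{eqq.doubling_measure} on balls centered at $x'$, since $B_{(1+\tau)r}(x)\subset B_{(2+\tau)r}(x')$.

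\emph{Conclusion.} Combining the two displays, the supremum over $x'\in B_{r}(x)$ of $\bigl|\fint_{B_{\tau r}(x')}f\,d\mu-f(x)\bigr|$ is bounded by a constant times $\fint_{B_{(1+\tau)r}(x)}|f-f(x)|\,d\mu$. This tends to $0$ as $r\to 0$, because $\mu$-a.e. point is a Lebesgue point. This yields the claimed limit (indeed a stronger uniform-in-$x'$ statement) for $\mu$-a.e. $x$.
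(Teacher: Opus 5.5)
Your proof is correct and follows the paper's argument. Like the paper, you bound $\sup_{x'\in B_r(x)}\fint_{B_{\tau r}(x')}|f-f(x)|\,d\mu$ by a constant times $\fint_{B_{(1+\tau)r}(x)}|f-f(x)|\,d\mu$ via the volume comparison of Lemma \ref{Lm.volume_decay}, then invoke Lemma \ref{Lm.Lebesgue_point}; your separate lower-bound step is redundant, since $|\sup_{x'}a(x')-f(x)|\le\sup_{x'}|a(x')-f(x)|$ together with the two-sided estimate already gives the full limit, but it does no harm.
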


\begin{proof}
By Lemma \ref{Lm.volume_decay}, there is a constant depending on $\delta$ and $\operatorname{D}_{\mu}(1+\tau)$ only such that for all $x \in \operatorname{X}$ and $r \in (0,1/2)$ we have
\begin{equation}
\begin{split}
\notag
&\Bigl|\sup\limits_{x' \in B_{r}(x)}\fint\limits_{B_{\tau r}(x')}f(z)\,d\mu(z) - f(x)\Bigr|\\
&\le \sup\limits_{x' \in B_{r}(x)}\fint\limits_{B_{\tau r}(x')}|f(z)-f(x)|\,d\mu(z) \le C\fint\limits_{B_{(1+\tau)r}(x)}|f(z)-f(x)|\,d\mu(z).
\end{split}
\end{equation}
It remains to apply Lemma \ref{Lm.Lebesgue_point}.
\end{proof}

Given $q \in [1,\infty)$ and $R > 0$, for each $f \in L_{q}^{\rm loc}(\operatorname{X})$, we define the \textit{local maximal function} of $f$ at the scale $R$ by letting
\begin{equation}
\label{eqq.maximal_function}
\operatorname{M}^{\le R}_{q}[f](x):=\sup\limits_{r \in (0,R)}\Bigl(\fint\limits_{B_{r}(x)}|f(y)|^{q}\,d\mu(y)\Bigr)^{\frac{1}{q}}.
\end{equation}

The crucial fact concerning the local maximal functions is that the corresponding operator gives rise to a bounded endomorphism of $L_{p}(\operatorname{X})$
provided that $\mu$ has the doubling property. More precisely we have the following result (since the proof repeats almost verbatim arguments in the proof of Theorem 3.5.6 from \cite{HKST}, we omit details) 
\begin{Prop}
\label{Prop.maximal_function}
If the measure $\mu$ is uniformly locally doubling measure and $p \in (1,\infty)$, then, for each $q \in [1,p)$ and each $R > 0$, there is a constant $C=C(q,R) > 0$ such that
\begin{equation}
\label{eqq.maximal_inequality}
\|\operatorname{M}^{\le R}_{q}[f]\|_{L_{p}(\operatorname{X})} \le C \|f\|_{L_{p}(\operatorname{X})}, \quad x \in \operatorname{X}.
\end{equation}
\end{Prop}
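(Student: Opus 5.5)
The statement to prove is Proposition \ref{Prop.maximal_function}: the $L_p$-boundedness of the local maximal operator $\operatorname{M}^{\le R}_{q}$ for $q<p$. I would reduce it to the classical $L_{s}$-boundedness of the local Hardy--Littlewood maximal operator with $q=1$, applied with exponent $s:=p/q>1$.

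\textbf{Reduction step.} Write $g:=|f|^{q}$. For $f\in L_p(\operatorname{X})$ we have $g\in L_{p/q}(\operatorname{X})$ and $\|g\|_{L_{p/q}}=\|f\|_{L_p}^{q}$. By definition,
\begin{equation}
\notag
\operatorname{M}^{\le R}_{q}[f](x)=\bigl(\operatorname{M}^{\le R}_{1}[g](x)\bigr)^{1/q}.
\end{equation}
Hence
\begin{equation}
\notag
\|\operatorname{M}^{\le R}_{q}[f]\|_{L_p}^{p}=\|\operatorname{M}^{\le R}_{1}[g]\|_{L_{p/q}}^{p/q}.
\end{equation}
It therefore suffices to show that $\operatorname{M}^{\le R}_{1}$ is bounded on $L_{s}$ for every $s\in(1,\infty]$, with a constant depending only on $s$ and $\operatorname{D}_\mu(cR)$.

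\textbf{Weak type $(1,1)$.} Let $g\ge 0$ be in $L_1$, let $\lambda>0$, and set $E_\lambda:=\{\operatorname{M}^{\le R}_{1}[g]>\lambda\}$. I first check that $E_\lambda$ is open, using the strict inequality with open balls and the lower semicontinuity of $x\mapsto\mu(B_r(x))$ in an appropriate form. For each $x\in E_\lambda$ choose a radius $r_x<R$ with $\int_{B_{r_x}(x)}g>\lambda\mu(B_{r_x}(x))$. The radii are uniformly bounded, so the $5B$-covering lemma gives a countable disjoint subfamily $\{B_i\}$ with $E_\lambda\subset\bigcup 5B_i$; countability follows from separability of $\operatorname{X}$ and $\mu(B)>0$ for every ball.

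Since $5B_i$ has radius $<5R$, uniform local doubling applied three times gives $\mu(5B_i)\le \operatorname{D}_\mu(4R)^{3}\mu(B_i)$. Therefore
\begin{equation}
\notag
\mu(E_\lambda)\le \operatorname{D}_\mu(4R)^3\sum_i\mu(B_i)\le \frac{\operatorname{D}_\mu(4R)^3}{\lambda}\int g\,d\mu.
\end{equation}
There is also the trivial bound $\|\operatorname{M}^{\le R}_{1}[g]\|_{L_\infty}\le\|g\|_{L_\infty}$.

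\textbf{Marcinkiewicz interpolation.} Split $g=g\chi_{\{g>\lambda/2\}}+g\chi_{\{g\le\lambda/2\}}$. The second piece contributes at most $\lambda/2$ to the maximal function, so $E_\lambda\subset\{\operatorname{M}^{\le R}_{1}[g\chi_{\{g>\lambda/2\}}]>\lambda/2\}$. Apply the weak-type bound to the first piece and use the layer-cake formula with Fubini. This yields
\begin{equation}
\notag
\|\operatorname{M}^{\le R}_{1}[g]\|_{L_s}^{s}\le C(s,\operatorname{D}_\mu(4R))\,\|g\|_{L_s}^{s}.
\end{equation}
Taking $s=p/q$ and returning through the reduction step proves the proposition, with $C=C(q,R)$; the constant also depends on $p$ and $\mu$.

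\textbf{Main obstacle.} The only delicate point is measure-theoretic: the Vitali covering must be carried out for a possibly non-compact set $E_\lambda$ in a space where doubling holds only locally. I would handle this by restricting to $E_\lambda\cap B_N(\underline{x})$, applying the $5B$-lemma there, and letting $N\to\infty$ by monotone convergence. Because all radii are below $R$, only $\operatorname{D}_\mu(4R)$ ever enters, so the local doubling constant suffices. Everything else is verbatim Theorem 3.5.6 of \cite{HKST}.
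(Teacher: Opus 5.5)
Your proposal is correct and follows the route the paper indicates. The paper omits the proof and points to Theorem 3.5.6 of \cite{HKST}, which is exactly your argument: the reduction $\operatorname{M}^{\le R}_{q}[f]=(\operatorname{M}^{\le R}_{1}[|f|^{q}])^{1/q}$, the weak type $(1,1)$ bound via the $5B$-covering lemma (only $\operatorname{D}_\mu(4R)$ enters because all radii are below $R$), and Marcinkiewicz interpolation at $s=p/q>1$.

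Two minor remarks. First, justify the openness of $E_\lambda$ by shrinking the radius: pick $r_1<r_x$ with $\int_{B_{r_1}(x)}g>\lambda\mu(B_{r_x}(x))$, and use $B_{r_1}(x)\subset B_{r_1+\rho(x,y)}(y)\subset B_{r_x}(x)$ for $y$ close to $x$. Lower semicontinuity of $x\mapsto\mu(B_r(x))$ points the wrong way for the denominator. Second, the basic $5B$-lemma requires no compactness, so the exhaustion by $B_N(\underline{x})$ is unnecessary.
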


\subsection{Fundamental consequences of Poincar\'e-type inequalities}
Throughout this subsection we fix  $p \in [1,+\infty)$,
a metric measure space $\operatorname{X}=(\operatorname{X},\rho,\mu) \in \mathcal{A}_{p}$, and a complete metric space $\operatorname{Y}:=(\operatorname{Y},\operatorname{d})$.
We recall the fundamental $\operatorname{Lip}-\operatorname{lip}$-property of the $p$-admissible metric measure spaces.  This was firstly observed
by Keith in \cite{Keith} and then simplified in \cite{KlMac}. In this subsection we establish a similar result for metric-valued Lipschitz maps.

First of all we recall that a Borel function $\rho:\operatorname{X} \to [0,+\infty]$ is said to be an \textit{upper gradient} of a map $u:\operatorname{X} \to \operatorname{Y}$ if
for each rectifiable curve $\gamma:[a,b] \to \operatorname{X}$ it holds
\begin{equation}
\label{eqq.upper_gradient}
\operatorname{d}(u(a),u(b)) \le \int\limits_{0}^{l(\gamma)}\rho(\gamma_{s}(t))\,dt,
\end{equation}
where $l(\gamma)$ denotes the \textit{length} of $\gamma$ and $\gamma_{s}:[0,l(\gamma)] \to \gamma([a,b])$ is the so-called \textit{arc-length parametrization} of $\gamma$ (see Section 5.1 in \cite{HKST} for details).

\begin{Lm}
\label{Lm.upper_gradient}
If $u \in \operatorname{LIP}(\operatorname{X},\operatorname{Y})$, then $\operatorname{lip}[ u]$ is an upper gradient of $u$.
\end{Lm}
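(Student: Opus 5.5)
The plan is to reduce the claim to the classical scalar statement for real-valued Lipschitz functions, where $\operatorname{lip}[f]$ is known to be an upper gradient. Fix a rectifiable curve $\gamma:[a,b]\to\operatorname{X}$ and pass to its arc-length parametrization $\gamma_{s}:[0,l]\to\operatorname{X}$, $l=l(\gamma)$. The composition $\sigma:=u\circ\gamma_{s}:[0,l]\to\operatorname{Y}$ is Lipschitz with constant at most $L_{u}$, since $\gamma_{s}$ is $1$-Lipschitz. Hence $\sigma$ is absolutely continuous and has a metric derivative $|\dot\sigma|(t)=\lim_{h\to 0}\operatorname{d}(\sigma(t+h),\sigma(t))/|h|$ for $\mathcal{L}^{1}$-a.e. $t$. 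We also have $\operatorname{d}(\sigma(0),\sigma(l))\le\int_{0}^{l}|\dot\sigma|(t)\,dt$. This is the standard one-dimensional fact: it can be checked by composing $\sigma$ with the $1$-Lipschitz functions $\operatorname{d}(\cdot,y_{n})$ for a dense sequence $\{y_{n}\}$ in the separable set $\sigma([0,l])$, which is the Fréchet trick of Lemma \ref{Lm.Frechet}. It therefore suffices to prove the pointwise bound
\[
|\dot\sigma|(t)\le \operatorname{lip}[u](\gamma_{s}(t)) \quad\hbox{for $\mathcal{L}^{1}$-a.e. $t\in[0,l]$}.
\]

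To obtain this bound, fix $t$ at which the metric derivative exists and at which $\gamma_{s}$ is "metrically differentiable with unit speed", meaning $\rho(\gamma_{s}(t+h),\gamma_{s}(t))/|h|\to 1$. The latter holds a.e. because arc-length curves have metric speed $1$ a.e. Write $x=\gamma_{s}(t)$. For a small $r>0$, choose $h$ with $|h|$ slightly less than $r$, say $|h|=r(1-\epsilon)$. For such $t$ and small $r$ we have $\rho(\gamma_{s}(t+h),x)<r$, so both $x$ and $\gamma_{s}(t+h)$ lie in $B_{r}(x)$. Therefore
\[
\frac{\operatorname{d}(\sigma(t+h),\sigma(t))}{|h|}\le \frac{r}{|h|}\operatorname{var}_{r}[u](x)=\frac{1}{1-\epsilon}\operatorname{var}_{r}[u](x).
\]
Now let $r\to0$ along a sequence realizing the $\liminf$ in the definition of $\operatorname{lip}[u](x)$. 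The left side tends to $|\dot\sigma|(t)$, since the metric derivative is a full limit and hence holds along any sequence $h\to0$. Sending $\epsilon\to0$ then gives $|\dot\sigma|(t)\le\operatorname{lip}[u](x)$.

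The main subtlety is exactly this use of a full limit for $|\dot\sigma|$ combined with an arbitrary sequence for the $\liminf$ defining $\operatorname{lip}$. The argument works because the metric derivative exists a.e. for Lipschitz curves into an arbitrary metric space. I would justify that existence either by citing the standard result (Ambrosio's metric differentiation on the interval) or via the Fréchet embedding: on the separable range, $|\dot\sigma|(t)=\sup_{n}|(\operatorname{d}(\sigma(\cdot),y_{n}))'(t)|$ a.e.

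Two points remain to be checked. First, Borel measurability of $\operatorname{lip}[u]$, so that the line integral makes sense; this is given by Remark \ref{Rem.Lip_is_Borel}. Second, the reparametrization invariance in \eqref{eqq.upper_gradient}, which is immediate since the definition is already stated in terms of $\gamma_{s}$. Combining the integral inequality for $\sigma$ with the pointwise bound yields $\operatorname{d}(u(\gamma(a)),u(\gamma(b)))\le\int_{0}^{l(\gamma)}\operatorname{lip}[u](\gamma_{s}(t))\,dt$, as required.
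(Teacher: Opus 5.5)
Your argument is correct, but it is organised differently from the paper's.

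The paper never uses metric differentiation. It notes that $\operatorname{lip}[h\circ u]\le\operatorname{lip}[u]$ pointwise for every $h\in\operatorname{LIP}_{1}(\operatorname{Y})$. It then cites the scalar fact (Lemma 6.2.6 in HKST) that $\operatorname{lip}[h\circ u]$ is an upper gradient of $h\circ u$, and takes the supremum over $h$ on the left-hand side.

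You instead treat the metric-valued curve $\sigma=u\circ\gamma_{s}$ directly. You invoke Ambrosio's a.e.\ metric differentiability of Lipschitz curves together with $\operatorname{d}(\sigma(0),\sigma(l))\le\int_{0}^{l}|\dot\sigma|$, and prove $|\dot\sigma|(t)\le\operatorname{lip}[u](\gamma_{s}(t))$ a.e. This is essentially the metric-space transcription of the usual proof of the scalar lemma.

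The paper's route is two lines given the scalar black box. Yours is self-contained modulo metric differentiation, and it yields the slightly stronger pointwise control of the metric speed of $u\circ\gamma_{s}$, even with $\widetilde{\operatorname{var}}_{r}$ in place of $\operatorname{var}_{r}$.

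The two routes are closer than they look, because you do not actually need the full-limit property of $|\dot\sigma|$. Run your estimate for the single real function $\varphi:=\operatorname{d}(\sigma(\cdot),\sigma(0))$ at its points of differentiability. This gives $|\varphi'(t)|\le\operatorname{lip}[u](\gamma_{s}(t))$, and then $\operatorname{d}(\sigma(0),\sigma(l))=\varphi(l)-\varphi(0)\le\int_{0}^{l}|\varphi'|$. That is exactly the paper's argument with the single test function $\operatorname{d}(\cdot,u(\gamma(a)))$.

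Finally, your unit-speed condition on $\gamma_{s}$ is superfluous. Since $\gamma_{s}$ is $1$-Lipschitz, any increment of size less than $r$ already keeps $\gamma_{s}(t+\cdot)$ inside $B_{r}(x)$.
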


\begin{proof}
Clearly, for each $h \in \operatorname{LIP}_{1}(\operatorname{Y})$ we have $\operatorname{lip}[h \circ u](x) \le \operatorname{lip}[u](x)$ for all
$x \in \operatorname{X}$. According to Lemma 6.2.6 in \cite{HKST}, $\operatorname{lip}[h \circ u]$ is an upper gradient of $h\circ u$. Hence, for each rectifiable curve
$\gamma:[a,b] \to \operatorname{X}$, it holds
\begin{equation}
\notag
|(h\circ u)(\gamma(a))-(h\circ u)(\gamma(b))| \le \int\limits_{0}^{l(\gamma)}\operatorname{lip}[h\circ u](\gamma_{s}(t))\,dt.
\end{equation}
It remains to take the supremum over all $h \in \operatorname{LIP}_{1}(\operatorname{Y})$ in the left-hand side of the above inequality and apply Kuratowski imbedding theorem (see Section 4.1 in \cite{HKST}).
\end{proof}

The proof of the following important result is a direct consequence of Theorem 8.1.42 in \cite{HKST}, the Kuratowski imbedding result, and Lemma \ref{Lm.upper_gradient}.
We recall \eqref{eqq.Poincare}.
\begin{Prop}
\label{Prop.Poincare_maps}
Let $p \in [1,\infty)$ and $\operatorname{X} \in \mathcal{A}_{p}$. Then, for each $R > 0$,
there are constant $\operatorname{C} > 0$ and $\Lambda > 0$ (depending on $\operatorname{c}$ and $\lambda$) such that, for each $ u \in \operatorname{LIP}(\operatorname{X},\operatorname{Y})$,
\begin{equation}
\label{eqq.Poincare_maps}
\begin{split}
&\fint\limits_{B_{r}(x)}\fint\limits_{B_{r}(x)}\operatorname{d}(u(x'),u(x''))\,d\mu(x')\,d\mu(x'')\\
&\le \operatorname{C}r\Bigl(\fint\limits_{B_{\Lambda r}(x)}(\operatorname{lip}[ u](x'))^{p}\,d\mu(x')\Bigr)^{\frac{1}{p}} \quad \hbox{for all $(x,r) \in \operatorname{X} \times (0,R)$}.
\end{split}
\end{equation}
\end{Prop}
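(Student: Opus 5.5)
The plan is to reduce to the scalar-valued Poincaré inequality for Lipschitz functions in its ``double average'' form and then pass to the metric target through a Kuratowski-type embedding, using Lemma \ref{Lm.upper_gradient} to replace scalar upper gradients by $\operatorname{lip}[u]$.

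\emph{Step 1: reduction to a separable target.} Since $u$ is Lipschitz and $(\operatorname{X},\rho)$ is separable, the set $u(\operatorname{X})$ is separable. Fix a countable dense set $\{y_{n}\}$ in $u(\operatorname{X})$ and put $h_{n}:=\operatorname{d}(\cdot,y_{n})-\operatorname{d}(y_{n},u(x_{0}))$ for a fixed base point $x_{0}$. Each $h_{n}$ belongs to $\operatorname{LIP}_{1}(\operatorname{Y})$, and
\begin{equation}
\notag
\operatorname{d}(u(x'),u(x''))=\sup_{n}|h_{n}\circ u(x')-h_{n}\circ u(x'')|.
\end{equation}
So $u$ is realized isometrically as an $l_{\infty}$-valued map, as in Lemma \ref{Lm.Frechet}.

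\emph{Step 2: the key upper-gradient estimate.} The supremum over $n$ cannot be taken inside the double average componentwise, because the Poincaré inequality applied to each $h_{n}\circ u$ separately only controls each component. This is the main obstacle. The remedy is Theorem 8.1.42 in \cite{HKST}. Under doubling and a weak $(1,p)$-Poincaré inequality for Lipschitz functions with $\operatorname{lip}$ (hence, by \cite{HKST}, for all pairs $(f,g)$ with $g$ an upper gradient), the inequality holds for Banach-space-valued maps with an upper gradient. This is available because $(\operatorname{X},\rho)$ is complete and doubling, so the Poincaré inequality self-improves to pairs (function, upper gradient), and that result applies to $V$-valued maps with $V$ a Banach space such as $l_{\infty}$. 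By Lemma \ref{Lm.upper_gradient}, $\operatorname{lip}[u]$ is an upper gradient of $u$, and hence of $\iota\circ u$, since $\iota$ is an isometry.

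\emph{Step 3: the Banach-valued Poincaré inequality.} Applying that theorem to $\iota\circ u$ gives, for $r<R$,
\begin{equation}
\notag
\fint_{B_{r}(x)}\|\iota\circ u-(\iota\circ u)_{B}\|_{\infty}\,d\mu\le \operatorname{C}' r\Bigl(\fint_{B_{\Lambda r}(x)}\operatorname{lip}[u]^{p}\,d\mu\Bigr)^{1/p}.
\end{equation}
Here the Bochner mean may be replaced by any $l_{\infty}$-valued vector; if measurability is an issue, one can instead use the pointwise estimate via the chain-of-balls (Lebesgue point) argument. Finally, the triangle inequality gives
\begin{equation}
\notag
\fint\fint \operatorname{d}(u(x'),u(x''))\le 2\fint\|\iota\circ u-c\|_{\infty},
\end{equation}
which is \eqref{eqq.Poincare_maps} with $\operatorname{C}=2\operatorname{C}'$. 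The constants depend only on $\operatorname{c}$, $\lambda$ and the doubling constant at scale comparable to $R$.

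If one wants to avoid Bochner integrals altogether, an alternative for Step 3 is to follow \cite{HKST}: the Poincaré inequality with upper gradients is equivalent to a pointwise estimate
\begin{equation}
\notag
\operatorname{d}(u(x'),u(x''))\le C\rho(x',x'')\bigl(M g(x')+M g(x'')\bigr)^{1/p},
\end{equation}
with $g=\operatorname{lip}[u]^{p}$ and $M$ a restricted maximal function with radius comparable to $\rho(x',x'')$. This estimate holds componentwise for each $h_{n}\circ u$ with the same $g$, because $\operatorname{lip}[h_{n}\circ u]\le\operatorname{lip}[u]$. Since $g$ does not depend on $n$, one can take the supremum over $n$ pointwise and then integrate over $B_{r}(x)\times B_{r}(x)$. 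The step that needs care is obtaining the double-average bound from the pointwise one without losing the exponent; the natural way to get it is the telescoping-balls argument.
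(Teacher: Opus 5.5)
Your main argument is the paper's own: realize $u$ isometrically in $l_{\infty}$ via the Fr\'echet--Kuratowski embedding, use Lemma \ref{Lm.upper_gradient} to get that $\operatorname{lip}[u]$ is an upper gradient of $\iota\circ u$, invoke the Banach-valued Poincar\'e inequality of Theorem 8.1.42 in \cite{HKST}, and pass to the double average by the triangle inequality. Only your optional pointwise-estimate variant is incomplete: integrating $(Mg)^{1/p}$ over a ball works for $p>1$ (Kolmogorov's inequality), but for $p=1$ the maximal function is only weak-$L_{1}$, so keep the Theorem 8.1.42 route to cover the full range $p\in[1,\infty)$.
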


Now we are ready to establish the crucial analytical property. It's proof repeats almost verbatim the corresponding arguments from \cite{KlMac}.
We give a sketch for the reader convenience.
\begin{Prop}
\label{Prop.Lip_lip}
If $\operatorname{Y}=(\operatorname{Y},\operatorname{d})$ is separable, then there is a constant $\operatorname{K} > 0$ such that, for each $u \in \operatorname{LIP}(\operatorname{X},\operatorname{Y})$, it holds
\begin{equation}
\label{eqq.Lip_lip}
\operatorname{Lip}[u](x) \le \operatorname{K}\operatorname{lip}[u](x) \quad \hbox{for $\mu$-a.e. $x \in \operatorname{X}$}.
\end{equation}
\end{Prop}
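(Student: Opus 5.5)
We follow the scheme of Keith and Kleiner--Mackay \cite{Keith,KlMac}, replacing $|f(x)-f(y)|$ by $\operatorname{d}(u(x),u(y))$ throughout. The only analytic input is the metric-valued Poincar\'e inequality of Proposition \ref{Prop.Poincare_maps}, together with Lemma \ref{Lm.upper_gradient}, the doubling property, and Lebesgue differentiation (Lemma \ref{Lm.Lebesgue_point}, Remark \ref{Rem.differentiation_integarl}). Separability of $\operatorname{Y}$ is used only for measurability: with a dense sequence $\{y_{n}\}$ one writes $\operatorname{d}(u(x),u(y))=\sup_{n}|\operatorname{d}(u(x),y_{n})-\operatorname{d}(u(y),y_{n})|$, so $\operatorname{lip}[u]$, $\operatorname{Lip}[u]$ and the relevant sets are Borel (Remark \ref{Rem.Lip_is_Borel}).

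\textbf{Step 1 (reduction).} Let $L:=L_{u}$. For $\epsilon>0$ and $k\in\mathbb{N}$, set
\begin{equation}
\notag
E_{k}:=\Bigl\{x:\operatorname{var}_{r}[u](x)\le \operatorname{lip}[u](x)+\epsilon \hbox{ for some } r\in[1/(k+1)... \Bigr\}
\end{equation}
or, more precisely, for a rational level $t$ we consider the set where $\operatorname{lip}[u]<t$. On that set, at each point there are arbitrarily small radii $r$ with $\operatorname{var}_{r}[u]<t$. It suffices to show that $\operatorname{Lip}[u](x)\le \operatorname{K}t$ at $\mu$-a.e.\ density point $x$ of $\{\operatorname{lip}[u]<t\}$ that is also a Lebesgue point of $\operatorname{lip}[u]^{p}$. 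Taking the union over rational $t$ then gives \eqref{eqq.Lip_lip}.

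\textbf{Step 2 (main estimate).} Fix such an $x$ and a small $r$, and take $y\in B_{r}(x)$. We want $\operatorname{d}(u(x),u(y))\lesssim t r+o(r)$. Following \cite{KlMac}, we first modify $u$: let $A:=\{\operatorname{lip}[u]<t\}\cap B_{2\Lambda r}(x)$ and define a Lipschitz map $\widetilde u$ that agrees with $u$ but has $\operatorname{lip}[\widetilde u]\lesssim t$ away from a small set. The cleaner route, which we adopt, is to apply the Poincar\'e inequality directly. By the chaining argument of Theorem 8.1.42 in \cite{HKST} (telescoping over balls $B_{2^{-i}r}(x)$ and $B_{2^{-i}r}(y)$), the pointwise estimate
\begin{equation}
\notag
\operatorname{d}(u(x),u(y))\le C r\Bigl(\operatorname{M}^{\le C r}_{p}[\operatorname{lip}[u]](x)+\operatorname{M}^{\le Cr}_{p}[\operatorname{lip}[u]](y)\Bigr)
\end{equation}
follows from Proposition \ref{Prop.Poincare_maps}, with all points treated as Lebesgue points since $u$ is continuous. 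This only gives $\operatorname{Lip}\lesssim$ the maximal function, not $\operatorname{lip}$. To obtain $\operatorname{lip}$, we use the KlMac trick of replacing $\operatorname{lip}[u]$ by the truncated upper gradient $g:=\min\{\operatorname{lip}[u],\,\cdot\}$ corrected on the bad set $B\setminus A$. Concretely, define $g:=t$ on $A$ and $g:=L$ elsewhere. We show that $g$ is (up to a multiplicative constant) an upper gradient of $u$ at the scale $r$ along curves. This uses the fact that at points of $A$ the map $u$ is $t$-Lipschitz at some small scale, combined with a Vitali covering of curve pieces. This is exactly the argument of Kleiner--Mackay for functions, and it transfers verbatim because it only uses triangle inequalities in the target.

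\textbf{Step 3 (conclusion).} Apply the chaining Poincar\'e estimate with $g$ in place of $\operatorname{lip}[u]$. This gives
\begin{equation}
\notag
\operatorname{d}(u(x),u(y))\le Cr\Bigl(t^{p}+L^{p}\frac{\mu(B_{C r}(x)\setminus A)}{\mu(B_{Cr}(x))}\Bigr)^{1/p}.
\end{equation}
Since $x$ is a density point of $A$, the second term is $o(1)$, so $\operatorname{Lip}[u](x)\le 2Ct$ and $\operatorname{K}:=2C$.

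\textbf{Main obstacle.} The main obstacle is Step 2: verifying that the modified $g$ serves as an upper gradient (or at least that Proposition \ref{Prop.Poincare_maps} holds with $g$). Our first attempt will be the Kleiner--Mackay argument, applied to the scalar functions $\operatorname{d}(u(\cdot),y_{n})$ with a common $g$. The required statement is exactly that these functions have $g$ as an upper gradient, uniformly in $n$, after which we take the supremum over $n$.
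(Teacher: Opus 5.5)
\textbf{Gap in Step 3.} The bound in Step 3 does not follow from the estimate you derive in Step 2. That chaining estimate contains the term $\operatorname{M}^{\le Cr}_{p}[\cdot](y)$. With $g$ in place of $\operatorname{lip}[u]$ this term is $\sup_{s<Cr}\bigl(\fint_{B_{s}(y)}g^{p}\,d\mu\bigr)^{1/p}$, so it involves arbitrarily small balls around an \emph{arbitrary} point $y\in B_{r}(x)$.

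The fact that $x$ is a density point of $\{\operatorname{lip}[u]<t\}$ says nothing about those balls. The point $y$ may lie in a small region where $\operatorname{lip}[u]\ge t$, and then this maximal function is of size $L$. All you get is $\operatorname{d}(u(x),u(y))\le Cr(t+L)$. The Step 3 display depends only on the density ratio of $A$ at $x$ at scale $Cr$, so it is unjustified, and changing the weight $g$ does not address this.

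\textbf{The missing idea: stop the chain at scale $\delta r$.} Join $x$ to $y$ by $N\le C/\delta$ balls of radius comparable to $\delta r$ with centres in $B_{Cr}(x)$, using local quasiconvexity. Apply Proposition~\ref{Prop.Poincare_maps} only on these balls. Bound the two end pieces, from $u(x)$ and from $u(y)$ to the first and last ball, crudely by the global Lipschitz constant, at a cost of $C\delta rL$.

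All balls then have radii in $[\delta r,Cr]$, so doubling gives, for each of them,
$\fint_{B}\operatorname{lip}[u]^{p}\,d\mu\le t^{p}+C_{\delta}(2L)^{p}\mu(B_{2Cr}(x)\setminus\{\operatorname{lip}[u]<t\})/\mu(B_{2Cr}(x))$.
Hence $\operatorname{d}(u(x),u(y))\le Cr\,(t+o_{r}(1))+C\delta rL$, with $C$ independent of $\delta$ because $N\delta\le C$. Letting $r\to0$ and then $\delta\to0$ gives $\operatorname{Lip}[u](x)\le Ct$.

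This is exactly the paper's proof; its error term $4\delta rL_{u}$ is the end-piece cost above. The paper controls the averages through Lemma~\ref{Lm.differentiation} applied to $\operatorname{lip}[u]^{p}$ instead of through density points, and here the two formulations are interchangeable.

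\textbf{Your ``main obstacle'' is not an obstacle.} The set $A$ is Borel because $\operatorname{lip}[u]$ is Borel (Remark~\ref{Rem.Lip_is_Borel}). Put $g=2L$ off $A$; with the paper's two-point $\operatorname{var}_{r}$ one only has $\operatorname{lip}[u]\le 2L_{u}$. Then $g\ge\operatorname{lip}[u]$ pointwise. So Proposition~\ref{Prop.Poincare_maps} holds with $g$ on the right-hand side simply because that side only increases; alternatively, $g$ is an upper gradient by Lemma~\ref{Lm.upper_gradient}.

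No Kleiner--Mackay-type Vitali covering of curve pieces is needed, and neither is a reduction to the scalar functions $\operatorname{d}(u(\cdot),y_{n})$. Once the endpoint truncation above is in place, the modification of $\operatorname{lip}[u]$ into $g$ is superfluous.
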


\begin{proof}
Repeating with minor modifications arguments from Appendix A in \cite{KlMac} we deduce that the metric space $(\operatorname{X},\rho)$ is locally quasiconvex. More precisely,
for each $R > 0$, there is a constant $\sigma=\sigma(R) \geq 1$ such that for any $x_{1},x_{2}$ with $\rho(x_{1},x_{2}) \le R$ there is a rectifiable
curve $\gamma \in C([a,b],\operatorname{X})$ such that $\gamma(a)=x_{1}$, $\gamma(b) = x_{2}$ and $l(\gamma) \le \sigma \rho(x_{1},x_{2})$.

Since the map $u$ is Lipschitz the function $\operatorname{lip}[u]$ is Borel and bounded. Hence, by
Lemma \ref{Lm.differentiation} there is a set $\underline{\operatorname{X}} \subset \operatorname{X}$ with $\mu(\operatorname{X} \setminus \underline{\operatorname{X}}) = 0$
such that \eqref{eqq.differentiability_of_integral} holds for each $x \in \underline{\operatorname{X}}$ with $f$ replaced by $\operatorname{lip}[u]$.

Now we fix $\epsilon, \delta \in (0,\frac{1}{2})$ and $\underline{x} \in \underline{\operatorname{X}}$.
For any $r \in (0,1)$ and $y \in B_{r}(\underline{x})$, we fix an arbitrary finite sequence of points $\{x_{k}\}_{k=1}^{N}$, $N \in \mathbb{N}$ such that $\underline{x}=x_{1}$, $x_{N}=y$, $\rho(x_{i},x_{i+1}) \le \delta r$
and $N \le \frac{C}{\delta}$, where $C > 0$ depends on $\sigma$ only.
By Lemma \ref{Lm.differentiation}, there is $r(\underline{x},\epsilon) > 0$ such that, for each $r \in (0,r(\underline{x},\epsilon))$ and each $x \in B_{r}(\underline{x})$, we have
\begin{equation}
\notag
\begin{split}
&\fint\limits_{B_{\delta r}(x)}\fint\limits_{B_{\delta r}(x)}\operatorname{d}(u(x'),u(x''))\,d\mu(x')\,d\mu(x'')\\
&\le \operatorname{C}\delta r\Bigl(\fint\limits_{B_{\Lambda\delta r}(x)}(\operatorname{lip}[u](x'))^{p}\,d\mu(x')\Bigr)^{\frac{1}{p}} \le \operatorname{C}\delta r (\operatorname{lip}[u](\underline{x})+\epsilon).
\end{split}
\end{equation}
Hence, if $r \in (0,r(\underline{x},\epsilon))$, then by the triangle inequality we have
\begin{equation}
\notag
\begin{split}
&\operatorname{d}(u(\underline{x}),u(y)) \le \fint\limits_{B_{2\delta r}(x_{1})}\operatorname{d}(u(\underline{x}),u(x'))\,d\mu(x')+\sum\limits_{i=1}^{N}\fint\limits_{B_{2\delta r}(x_{i})}\operatorname{d}(u(x'),u(x''))\,d\mu(x')\,d\mu(x'')\\
&+ \fint\limits_{B_{2\delta r}(x_{N})}\operatorname{d}(u(y),u(x''))\,d\mu(x'') \le 4\delta r L_{u}+C N\delta r(\operatorname{lip}[u](x)+\epsilon)
\end{split}
\end{equation}
As a result, for each $r \in (0,r(\underline{x},\epsilon))$, we get
\begin{equation}
\notag
\operatorname{var}_{r}[u](\underline{x}) \le C(\operatorname{lip}[u](\underline{x})+\epsilon) + 4\delta L_{u},
\end{equation}
where $C > 0$ does not depend on $\epsilon$ and $\delta$. Hence, taking into account Lemma \ref{Lm.Lebesgue_point} and the arbitrariness in the choice of $\epsilon$ and $\delta$ we conclude.
\end{proof}

\subsection{Fine properties of Sobolev maps} Throughout this subsection
we fix $p \in (1,\infty)$, $\operatorname{X} \in \mathcal{A}_{p}$ and a complete pointed metric space $\operatorname{Y}=(\operatorname{Y},\operatorname{d},\underline{y})$.

We recall the so-called Lusin--Lipschitz property
of Sobolev spaces. It was proved in Section 8.2 of \cite{HKST} for the Banach-valued maps. The general case follows by the Kuratowski imbedding.
\begin{Prop}
\label{Prop.Lusin-Lipschitz}
Each $u \in W_{p}^{1}(\operatorname{X},\operatorname{Y})$ has the Lusin--Lipschitz property, i.e. there is a
sequence of Borel sets $\{U_{i}\}$ such that $\mu(\operatorname{X} \setminus \cup_{i}U_{i}) = 0$ and $ u|_{U_{i}}$ is Lipschitz for each
$i \in \mathbb{N}$.
\end{Prop}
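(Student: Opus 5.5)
The plan is to reduce the metric-valued statement to the Banach-valued case treated in Section 8.2 of \cite{HKST}, via an isometric embedding. By Proposition \ref{Prop.different_approaches}, $u \in \operatorname{HS}^{1}_{p}(\operatorname{X},\operatorname{Y})$. Hence for each $R>0$ there are a Hajlasz gradient $G_{R}[u] \in L_{p}(\operatorname{X})$ and a null set $N_{R}[u]$ such that \eqref{eqq.Hajlasz_Sobolev} holds. We fix $R=1$ and work with a Borel representative of $u$ with separable range.

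The sets are the sublevel sets of the Hajlasz gradient. For each $i \in \mathbb{N}$, set
\[
E_{i}:=\{x \in \operatorname{X}\setminus N_{1}[u]: G_{1}[u](x) \le i\},
\]
and decompose $E_{i}$ into countably many Borel pieces $E_{i,m}:=E_{i}\cap B_{1/2}(x_{m})$, where $\{x_{m}\}$ is a countable dense set in $\operatorname{X}$ (available by separability).

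\emph{Lipschitz bound on each piece.} Any two points $x_{1},x_{2} \in E_{i,m}$ satisfy $\rho(x_{1},x_{2})<1$. Therefore \eqref{eqq.Hajlasz_Sobolev} gives
\[
\operatorname{d}(u(x_{1}),u(x_{2})) \le 2i\rho(x_{1},x_{2}),
\]
so $u|_{E_{i,m}}$ is $2i$-Lipschitz.

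\emph{Exhaustion.} Since $G_{1}[u] \in L_{p}$, it is finite $\mu$-a.e. Hence $\operatorname{X}\setminus\bigcup_{i,m}E_{i,m}$ is contained in $N_{1}[u]\cup\{G_{1}[u]=\infty\}$, which is $\mu$-null. Relabelling $\{E_{i,m}\}$ as $\{U_{i}\}$ gives the claim. Borel measurability is arranged by taking a Borel representative of $G_{1}[u]$ and enlarging $N_{1}[u]$ to a Borel null set, using Borel regularity of $\mu$.

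The main obstacle is that the whole argument rests on the inclusion $W_{p}^{1}\subset\operatorname{HS}^{1}_{p}$ from \cite{GT}. If one prefers to avoid it, the alternative route is the following:
\begin{itemize}
\item embed $u(\operatorname{X})$ isometrically into $l_{\infty}$ by Lemma \ref{Lm.Frechet};
\item check that $\iota\circ u$ is a Banach-valued Newtonian/Sobolev map, with each coordinate $\operatorname{d}(u,y_{n})-\operatorname{d}(y_{n},\underline{y})$ being $h\circ u$ for a $1$-Lipschitz $h$, so that its gradient is bounded by $|Du|_{p}$;
\item derive the pointwise estimate
\[
\operatorname{d}(u(x),u(y))\le C\rho(x,y)\bigl(\operatorname{M}^{\le R}_{q}[|Du|_{p}](x)+\operatorname{M}^{\le R}_{q}[|Du|_{p}](y)\bigr)
\]
at Lebesgue points, for some $q<p$ given by the self-improvement of the Poincar\'e inequality;
\item apply Proposition \ref{Prop.maximal_function} to obtain $\operatorname{M}^{\le R}_{q}[|Du|_{p}] \in L_{p}$, and conclude exactly as above with the maximal function in place of $G_{1}[u]$.
\end{itemize}
The delicate point in this alternative route is the Poincar\'e inequality for the $l_{\infty}$-valued map, which must hold uniformly over the coordinates. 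That uniformity comes from the common majorant $|Du|_{p}$ together with a telescoping argument over balls, applied coordinatewise and followed by taking the supremum over $n$.
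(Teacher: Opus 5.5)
Your main argument is correct, but it takes a different route from the paper. The paper gives no independent argument. It cites Section 8.2 of \cite{HKST}, where the Lusin--Lipschitz property is proved for Banach-valued maps through the pointwise maximal-function estimate coming from the Poincar\'e inequality. It then passes to metric targets through the Kuratowski (Fr\'echet) embedding into $l_{\infty}$, which is essentially the ``alternative route'' you sketch at the end.

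You instead take the inclusion $W_{p}^{1}(\operatorname{X},\operatorname{Y})\subset\operatorname{HS}^{1}_{p}(\operatorname{X},\operatorname{Y})$ from Proposition \ref{Prop.different_approaches} as the black box. You then read off the Lipschitz pieces as sublevel sets of a Hajlasz gradient intersected with balls of radius $1/2$. This is legitimate within the paper's logic, since the same proposition is invoked in the proof of Proposition \ref{Prop.locality}. It is shorter, gives explicit pieces with explicit Lipschitz constants $2i$, and avoids checking that $\iota\circ u$ is an $l_{\infty}$-valued Sobolev map in the sense of \cite{HKST}.

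The price is that you lean on a stronger result than the statement needs. The standard proof that a Hajlasz gradient can be taken in $L_{p}(\operatorname{X})$ goes through the Keith--Zhong self-improvement $\operatorname{X}\in\mathcal{A}_{q}$ for some $q<p$. The Lusin--Lipschitz property only needs two things:
\begin{itemize}
\item the pointwise estimate with $\operatorname{M}^{\le R}_{p}[|Du|_{p}]$ coming directly from the $(1,p)$-Poincar\'e inequality;
\item the $\mu$-a.e.\ finiteness of that maximal function, which follows from the weak type $(1,1)$ bound for the local maximal operator applied to $|Du|_{p}^{p}\in L_{1}(\operatorname{X})$.
\end{itemize}
The same remark applies to your alternative route. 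There the detour through $q<p$ and Proposition \ref{Prop.maximal_function} can be dropped, because your exhaustion step only uses a.e.\ finiteness of the majorant, not its $L_{p}$-integrability.
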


We need a modification of the Sobolev imbedding theorem.
\begin{Prop}
\label{Prop.Sobolev_imb}
There exists $q \in (1,p)$ such that for each $R > 0$ there is $C > 0$ such that, for each $f \in W_{p}^{1}(\operatorname{X})$ and each $r \in (0,R)$, it holds
\begin{equation}
\operatorname{e}_{p,r}[f](x) \le C\operatorname{M}_{q}^{\le \lambda r}[|Df|_{p}](x) \quad \hbox{for $\mu$-a.e. $x \in \operatorname{X}$}.
\end{equation}
\end{Prop}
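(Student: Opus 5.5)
Fix $R>0$ and use the self-improvement of the Poincaré inequality. By the Keith--Zhong theorem (recall that $\operatorname{X}$ is complete and uniformly locally doubling with a weak local $(1,p)$-Poincaré inequality), $\operatorname{X}$ also supports a weak local $(1,q)$-Poincaré inequality for some $q\in(1,p)$. This $q$ depends only on $p$ and the data of $\operatorname{X}$, and it is the exponent in the statement. By density of Lipschitz functions in $W^1_p(\operatorname{X})$ (with convergence of $|Df|_p$ in $L_p$, valid under doubling and Poincaré), this Poincaré inequality extends to $f\in W^1_p(\operatorname{X})$ with $|Df|_p$ as the gradient. We may also enlarge $\lambda$, so the statement holds with the $\lambda$ of this $(1,q)$-inequality.

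The main step is the standard telescoping argument at a Lebesgue point. Take $x$ a Lebesgue point of $f$ (which is $\mu$-a.e. $x$, by Lemma \ref{Lm.Lebesgue_point}), and set $B_i:=B_{2^{-i}r}(x)$ for $i\ge 0$. By the triangle inequality,
\begin{equation}
\notag
\operatorname{e}_{p,r}[f](x)\le \frac1r\Bigl(\fint_{B_0}|f-f_{B_0}|^p\,d\mu\Bigr)^{1/p}+\frac1r\,|f(x)-f_{B_0}|.
\end{equation}
For the second term, write $|f(x)-f_{B_0}|\le\sum_{i\ge0}|f_{B_{i+1}}-f_{B_i}|$. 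Doubling gives $|f_{B_{i+1}}-f_{B_i}|\le C\fint_{B_i}|f-f_{B_i}|\,d\mu$, and the $(1,q)$-Poincaré inequality bounds this by $C2^{-i}r\bigl(\fint_{\lambda B_i}|Df|_p^q\,d\mu\bigr)^{1/q}\le C2^{-i}r\operatorname{M}^{\le\lambda r}_q[|Df|_p](x)$. Here a harmless adjustment of radii (replacing $\lambda$ by $2\lambda$) accommodates the strict inequality in the definition of the maximal function. Summing the geometric series handles this term.

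For the first term, the oscillation is measured in $L_p$, while the gradient control is only in $L_q$ with $q<p$, and this is the delicate point. I would handle it by the same chaining argument applied pointwise. For $\mu$-a.e. $y\in B_0$ (Lebesgue points of $f$), chaining along the balls $B_{2^{-i}r}(y)$ gives
\begin{equation}
\notag
|f(y)-f_{B_0}|\le |f(y)-f_{B_{r}(y)}|+|f_{B_r(y)}-f_{B_0}|\le Cr\,\operatorname{M}^{\le 2\lambda r}_q[|Df|_p](y)\ \text{(localized)}.
\end{equation}
A cruder bound, namely the $q$-maximal function of $|Df|_p\chi_{\lambda' B_0}$ at $y$, suffices here. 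Taking the $L_p(B_0)$-average then requires the $L_p$-boundedness of the $q$-maximal operator (Proposition \ref{Prop.maximal_function}, since $q<p$). This yields $\bigl(\fint_{B_0}|f-f_{B_0}|^p\bigr)^{1/p}\le Cr\bigl(\fint_{\lambda'B_0}|Df|_p^p\bigr)^{1/p}$, which is a $(p,p)$-Poincaré inequality and not yet a bound by $\operatorname{M}_q$ at $x$.

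To get a genuine $q$-average, I would instead invoke the Sobolev--Poincaré improvement (Hajłasz--Koskela): under doubling and a $(1,q)$-Poincaré inequality one has the $(q^*,q)$-inequality $\bigl(\fint_B|f-f_B|^{s}\bigr)^{1/s}\le Cr\bigl(\fint_{\lambda B}|Df|^q\bigr)^{1/q}$ for some $s>q$. Using the quantitative form of Keith--Zhong, choose $q$ close enough to $p$ that $s\ge p$ (with $s$ determined through $\underline{\theta}_\mu$ from Lemma \ref{Lm.volume_decay}, since $q^*=\theta q/(\theta-q)$ when $q<\theta$ and $q^*$ is arbitrarily large otherwise). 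Then the first term is at most $C\operatorname{M}^{\le\lambda r}_q[|Df|_p](x)$ directly. I expect this choice of $q$, which must simultaneously give the $(1,q)$-Poincaré inequality and a Sobolev exponent $\ge p$, to be the main technical point; the local (scale $\le R$) versions of these results are routine but need the geodesic modification of Proposition \ref{Prop.Poincare_forgotten} and Remark \ref{Rem.Sobolev_forgotten} to transfer the constants.
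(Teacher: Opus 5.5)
Your proof is correct. Its skeleton is the paper's: Keith--Zhong gives a $(1,q)$-Poincar\'e inequality, the Lipschitz density result of Ambrosio--Gigli--Savar\'e transfers it to the pair $(f,|Df|_{p})$, and telescoping at Lebesgue points controls $\frac{1}{r}|f(x)-f_{B_{r}(x)}|$.

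Where you go beyond the paper is the term $\frac{1}{r}\bigl(\fint_{B_{r}(x)}|f-f_{B_{r}(x)}|^{p}\,d\mu\bigr)^{1/p}$. The paper's sketch passes straight from the $(1,q)$-inequality to a telescoping sum, which accounts only for $\frac{1}{r}|f(x)-f_{B_{r}(x)}|$. It also takes for $q$ whatever exponent Keith--Zhong delivers. You correctly see that an $L_{p}$-oscillation cannot be bounded by a $q$-average of the gradient at $x$ without a Sobolev--Poincar\'e improvement, and that $q$ must be chosen with $q^{*}\ge p$.

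This restriction is genuinely necessary. In $\mathbb{R}^{n}$ the $(1,q)$-inequality holds for every $q\ge 1$. Yet the bumps $f_{\epsilon}=\phi((\cdot-y_{0})/\epsilon)$, at every $x$ with $|x-y_{0}|\approx r/2$, give a left-hand side of order $\epsilon^{n/p}$ and a right-hand side of order $\epsilon^{n/q-1}$. So the inequality fails whenever $\frac{1}{q}-\frac{1}{p}>\frac{1}{n}$. Your argument therefore supplies a step the paper's proof actually needs, rather than just reproducing it.

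A few simplifications:
\begin{itemize}
\item You do not need a quantitative form of Keith--Zhong. If the $(1,q_{0})$-inequality holds, H\"older's inequality gives it for every $q\in[q_{0},p)$.
\item Take $\theta>\max\{\underline{\theta}_{\mu},p\}$ in Lemma \ref{Lm.volume_decay} (any larger exponent is allowed there). Then any $q\in[q_{0},p)$ with $q>\theta p/(\theta+p)$ gives $q^{*}>p$ strictly.
\item With $q^{*}>p$ strictly, the weak-type $L^{q^{*},\infty}$ Sobolev--Poincar\'e estimate suffices, and it needs no truncation property. On a ball, the normalized weak-$L^{q^{*}}$ quasinorm dominates the normalized $L_{p}$-norm up to a constant depending on $p$ and $q^{*}$.
\item The intermediate paragraph producing only a $(p,p)$-bound is a dead end and can be deleted.
\item The strict inequality in the definition of $\operatorname{M}^{\le\lambda r}_{q}$ costs nothing, since $\fint_{B_{s}(x)}|Df|_{p}^{q}\,d\mu\to\fint_{B_{\lambda r}(x)}|Df|_{p}^{q}\,d\mu$ as $s\uparrow\lambda r$. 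So no passage to $2\lambda$ is needed.
\item The $\lambda$ in your conclusion is the dilation constant of the Sobolev--Poincar\'e inequality, not the one in \eqref{eqq.Poincare}. This is harmless provided the same constant is used in the condition $r_{l}[u](x)<r/\lambda$ in Step 2 of the proof of Theorem \ref{Th.main_first}.
\end{itemize}
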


\begin{proof}
According to the beautiful result by Keith and Zhong (see Ch.12 in \cite{HKST} for a detailed exposition) in combination with the main results
of \cite{AGS}, there is $q \in (1,p)$ such that $\operatorname{X} \in \mathcal{A}_{q}$.
Again, by the main results of \cite{AGS}, given $f \in W_{p}^{1}(\operatorname{X})$ there is a sequence $\{f_{n}\} \subset \operatorname{LIP}(\operatorname{X})$
such that $\|f-f_{n}\|_{L_{p}(\operatorname{X})} \to 0$, $n \to \infty$, and, furthermore, $\|\operatorname{lip}f_{n}-|Df|_{p}\|_{L_{p}(\operatorname{X})} \to 0$, $n \to \infty$.
Since $L_{p}(\operatorname{X}) \subset L_{q}^{\rm loc}(\operatorname{X})$, writing \eqref{eqq.Poincare} with $f$ replaced by $f_{n}$, $n \in \mathbb{N}$, and
passing to the limit as $n \to \infty$, we get for each $r \in (0,R)$,
\begin{equation}
\notag
\fint\limits_{B_{r}(x)}\fint\limits_{B_{r}(x)}|f(x')-f(x'')|\,d\mu(x')\,d\mu(x'') \le \operatorname{c}r\Bigl(\fint\limits_{\lambda r}(|Df|_{p}(z))^{q}\,d\mu(z)\Bigr)^{\frac{1}{q}}.
\end{equation}
Now, using the standard telescopic-type arguments in combination with the doubling properties of $\mu$ we get,
\begin{equation}
\notag
\operatorname{e}_{p,r}[f](x) \le\sum\limits_{i=1}^{\infty}\frac{C}{2^{i}}\Bigl(\fint\limits_{\frac{\lambda r}{2^{i}}}(|Df|_{p})^{q}(z)\,d\mu(z)\Bigr)^{\frac{1}{q}} \le C\operatorname{M}_{q}^{\le \lambda r}[|Df|_{p}](x).
\end{equation}
The proof is complete.
\end{proof}

Now we can prove a rather standard locality property of the upper energy.
\begin{Prop}
\label{Prop.locality}
Let $u_{1} \in W_{p}^{1}(\operatorname{X},\operatorname{Y})$, $u_{2} \in \operatorname{LIP}(\operatorname{X},\operatorname{Y})$ and $U \subset \operatorname{X}$  be such that $\mu(U) > 0$ and $ u_{1}|_{U} = u_{2}|_{U}$. Then
\begin{equation}
\label{eqq.locality_energy}
\overline{\operatorname{e}}_{p}[u_{1}](x) = \overline{\operatorname{e}}_{p}[u_{2}](x) \quad \hbox{for $\mu$-a.e. $x \in U$}.
\end{equation}
\end{Prop}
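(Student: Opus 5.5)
The plan is to compare the two energies at density points of $U$ that are also Lebesgue points of suitable auxiliary functions. The error coming from the complement $B_{r}(x)\setminus U$ should be negligible after dividing by $r^{p}$.

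\textbf{Choice of the good set.} Since $u_{1} \in W_{p}^{1}(\operatorname{X},\operatorname{Y})$ and $u_{2}$ is Lipschitz (hence locally in $W_{p}^{1}$), Proposition \ref{Prop.different_approaches} puts both maps in $\operatorname{HS}^{1}_{p}$ at scale $R=1$, at least after localization to balls. Set $w:=\operatorname{d}(u_{1},u_{2})$, so that $w=0$ on $U$. The difference $w$ should itself have a Hajlasz gradient at scale $1$, namely $G:=G_{1}[u_{1}]+L_{u_{2}}$, and satisfy
\begin{equation}
\notag
|w(x')-w(x)| \le \rho(x,x')\bigl(G(x)+G(x')\bigr)
\end{equation}
for $x,x'$ outside a null set with $\rho(x,x')\le 1$. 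This follows from the triangle inequality applied to $\operatorname{d}(u_{1}(x'),u_{2}(x'))-\operatorname{d}(u_{1}(x),u_{2}(x))$. Let $x \in U$ be a point outside this null set which is a density point of $U$ and a Lebesgue point of $G^{p}$ (Remark \ref{Rem.differentiation_integarl}); $\mu$-a.e. point of $U$ qualifies.

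\textbf{Comparison of energies.} At such $x$ we have $u_{1}(x)=u_{2}(x)$, so for $x'\in B_{r}(x)$,
\begin{equation}
\notag
\bigl|\operatorname{d}(u_{1}(x'),u_{1}(x))-\operatorname{d}(u_{2}(x'),u_{2}(x))\bigr| \le w(x').
\end{equation}
Minkowski's inequality in $L_{p}(B_{r}(x),\mu/\mu(B_{r}(x)))$ then gives
\begin{equation}
\notag
|\operatorname{e}_{p,r}[u_{1}](x)-\operatorname{e}_{p,r}[u_{2}](x)| \le \Bigl(\frac{1}{\mu(B_{r}(x))}\int\limits_{B_{r}(x)\setminus U}\frac{w^{p}(x')}{r^{p}}\,d\mu(x')\Bigr)^{\frac{1}{p}}.
\end{equation}
Since $w(x)=0$, the Hajlasz estimate yields $w(x')/r \le G(x)+G(x')$. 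The right-hand side is therefore bounded by a constant times
\begin{equation}
\notag
G(x)\Bigl(\frac{\mu(B_{r}(x)\setminus U)}{\mu(B_{r}(x))}\Bigr)^{\frac1p}+\Bigl(\frac{1}{\mu(B_{r}(x))}\int_{B_{r}(x)\setminus U}G^{p}\,d\mu\Bigr)^{\frac1p}.
\end{equation}
Both terms tend to $0$ as $r\to0$: the first because $x$ is a density point of $U$, the second by \eqref{eqq.differentiability_of_integral'} applied to $f=G^{p}$. Taking $\varlimsup$ then gives \eqref{eqq.locality_energy}, and the same argument also yields equality of the lower energies.

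\textbf{Main obstacle.} The delicate step is the first one: securing a pointwise, two-point control of $w$ with an $L^{p}_{\rm loc}$ gradient. The issue is whether $\operatorname{HS}$ membership of $u_{1}$ is available globally, given that $u_{2}$ need only be in $L_{p}$ locally; this is handled by localizing, i.e. multiplying by cutoffs or working on a ball around $x$. If that route were problematic, I would instead use Proposition \ref{Prop.Sobolev_imb} applied to the scalar function $w$ (composed with a cutoff), where $\operatorname{e}_{p,r}[w](x)\le C\operatorname{M}_{q}^{\le\lambda r}[|Dw|_{p}](x)$. Combining this with $w=0$ on $U$ and the locality $|Dw|_{p}=0$ a.e. on $U$ gives the same conclusion at density points of $U$ that are Lebesgue points of $|Dw|_{p}^{q}$.
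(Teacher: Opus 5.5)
Your argument is correct and follows essentially the same route as the paper. Both restrict to density points of $U$ that are Lebesgue points of the $p$-th power of a Hajlasz gradient, observe that the integrand of $\operatorname{e}_{p,r}[u_{1}](x)-\operatorname{e}_{p,r}[u_{2}](x)$ vanishes on $U$, and control the remaining contribution of $B_{r}(x)\setminus U$ by the two-point Hajlasz estimate. The paper applies this estimate to $u_{1}$ and $u_{2}$ separately rather than to $w=\operatorname{d}(u_{1},u_{2})$, which gives the same bound.

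One small point in your favour: since you take $L_{u_{2}}$ itself as the gradient for the Lipschitz map, you only need $u_{1}\in\operatorname{HS}^{1}_{p}$, so the ``main obstacle'' you flag never arises. The paper, by contrast, invokes $\operatorname{HS}^{1}_{p}$ for both maps, although $u_{2}$ is not assumed to lie in $L_{p}(\operatorname{X},\operatorname{Y})$. Your normalization by $r^{p}$ and $p$-th roots is also cleaner than the paper's displayed chain.
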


\begin{proof}
According to Proposition \ref{Prop.different_approaches} we have $u \in \operatorname{HS}^{1}_{p}(\operatorname{X},\operatorname{Y})$.
Let $\underline{U} \subset U$ be the set of all density points of $U$ where both $G_{1}[u_{1}]$ and
$G_{1}[u_{2}]$ are finite, \eqref{eqq.differentiability_of_integral'} holds with $E$ replaced by $U$ and with $f$ replaced by both
$G_{1}[u_{1}]$ and $G_{1}[u_{2}]$. By Remark \ref{Rem.differentiation_integarl} we have $\mu(U \setminus \underline{U})=0$.
As a result, given $\underline{x} \in \underline{U}$, using the triangle inequality we get
\begin{equation}
\begin{split}
&|\overline{\operatorname{e}}_{p}[ u_{1}](\underline{x})-\overline{\operatorname{e}}_{p}[ u_{2}](\underline{x})|
\le \varlimsup\limits_{r \to 0}|\overline{\operatorname{e}}_{p,r}[u_{1}](\underline{x})-\overline{\operatorname{e}}_{p,r}[u_{2}](\underline{x})|\\
&\le \varlimsup\limits_{r \to 0}\sum\limits_{i=1}^{2}\frac{1}{\mu(B_{r}(\underline{x}))}\Bigl(\int\limits_{B_{r}(\underline{x}) \setminus U}\operatorname{d}^{p}(u_{i}(\underline{x}), u_{i}(x'))\,d\mu(x')\Bigr)^{\frac{1}{p}}\\
&\le \varlimsup\limits_{r \to 0}\sum\limits_{i=1}^{2}\Bigl[\frac{1}{\mu(B_{r}(\underline{x}))}\Bigl(\int\limits_{B_{r}(\underline{x}) \setminus U}(G_{1}[ u_{i}](x'))^{p}\,d\mu(x')\Bigr)^{\frac{1}{p}}+\frac{\mu(B_{r}(\underline{x})\setminus U)}{\mu(B_{r}(\underline{x}))}G_{1}[u_{i}](\underline{x})\Bigr]=0.
\end{split}
\end{equation}
The proof is complete.
\end{proof}

Surprisingly, the following observation borrowed from \cite{GT} (see Proposition 2.10 therein) will be one of the central ideas needed to prove Theorem \ref{Th.main_second}.
\begin{Prop}
\label{Prop.weak_convergence}
Given $p \in (1,\infty)$, let $\{ u_{n}\} \subset W^{1}_{p}(\operatorname{X},\operatorname{Y})$ and $ u \in L_{p}(\operatorname{X},\operatorname{Y})$ be such that
$ u_{n} \to  u$, $n \to \infty$ in $L_{p}(\operatorname{X},\operatorname{Y})$.
If $|Du_{n}|_{p} \rightharpoonup G$, $n \to \infty$ in $L_{p}$ to some $G \in L_{p}$, then $ u \in W^{1}_{p}(\operatorname{X},\operatorname{Y})$ and
\begin{equation}
\label{eqq.weak_convergence}
|Du|_{p}(x) \le G(x) \quad \hbox{for $\mu$-a.e. $x \in \operatorname{X}$}.
\end{equation}
\end{Prop}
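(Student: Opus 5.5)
The approach is to test the limit map against an arbitrary $h \in \operatorname{LIP}_{1}(\operatorname{Y})$, pass to the limit in the scalar problems $h\circ u_{n}$, and then use weak lower semicontinuity of minimal weak upper gradients to bound $|D(h\circ u)|_{p}$ by $G$.

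\emph{Step 1: scalar convergence.} Fix $h \in \operatorname{LIP}_{1}(\operatorname{Y})$. Replacing $h$ by $h-h(\underline{y})$ changes neither membership in $W_{p}^{1}(\operatorname{X})$ nor the minimal weak upper gradient, so we may assume $h(\underline{y})=0$. Then $|h\circ u| \le \operatorname{d}(u,\underline{y})$, hence $h\circ u \in L_{p}(\operatorname{X})$. Moreover $|h\circ u_{n}-h\circ u| \le \operatorname{d}(u_{n},u)$, so $h\circ u_{n} \to h\circ u$ in $L_{p}(\operatorname{X})$. By Definition \ref{Def.Resh_Sob}, $h\circ u_{n}\in W^{1}_{p}(\operatorname{X})$ with $|D(h\circ u_{n})|_{p} \le |Du_{n}|_{p}$ $\mu$-a.e. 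Since weakly convergent sequences are bounded, $\sup_{n}\||Du_{n}|_{p}\|_{L_{p}} < \infty$.

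\emph{Step 2: closure for scalar functions.} Set $g_{n}:=|D(h\circ u_{n})|_{p}$, which is bounded in $L_{p}$. By reflexivity ($p\in(1,\infty)$), a subsequence satisfies $g_{n}\rightharpoonup g$ in $L_{p}$. Since $0\le g_{n}\le |Du_{n}|_{p}$, testing against nonnegative $\varphi\in L_{p'}$ gives $\int g\varphi \le \int G\varphi$, so $g\le G$ $\mu$-a.e.

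We then show that $g$ is a $p$-weak upper gradient of $h\circ u$. Fix a $p'$-test plan $\pi$ with compression constant $C$ and write
\begin{equation}
\notag
\int |f_{n}(\gamma(1))-f_{n}(\gamma(0))|\,d\pi \le \int\int_{0}^{1} g_{n}(\gamma(t))|\dot\gamma|(t)\,dt\,d\pi, \qquad f_{n}:=h\circ u_{n}.
\end{equation}
The left side converges to the corresponding expression for $f:=h\circ u$, because
\begin{equation}
\notag
\int|f_{n}-f|\circ e_{i}\,d\pi \le C\int|f_{n}-f|\,d\mu
\end{equation}
and this tends to zero on sets of finite measure. To avoid issues with $\mu(\operatorname{X})=\infty$, I would first truncate and localize, or note that $(e_{i})_{\sharp}\pi$ has density in $L_{\infty}\cap L_{1}$, so $L_{p}$ convergence suffices via Hölder. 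On the right side, the functional $\ell(\phi):=\int\int_{0}^{1}\phi(\gamma_{t})|\dot\gamma_{t}|\,dt\,d\pi$ is a bounded linear functional on $L_{p}(\operatorname{X})$. Indeed, it is represented by the density of the measure $A\mapsto\int\int_{0}^{1}\chi_{A}(\gamma_{t})|\dot\gamma_{t}|\,dt\,d\pi$, and Hölder together with $(e_{t})_{\sharp}\pi\le C\mu$ shows this density lies in $L_{p'}$, since $\int\int|\dot\gamma|^{p'}<\infty$. Hence $\ell(g_{n})\to \ell(g)$. Therefore $g$ is a $p$-weak upper gradient of $h\circ u$, so $h\circ u\in W^{1}_{p}(\operatorname{X})$ and $|D(h\circ u)|_{p}\le g\le G$.

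\emph{Step 3: conclusion.} Since $G$ does not depend on $h$, Definition \ref{Def.Resh_Sob} gives $u\in W^{1}_{p}(\operatorname{X},\operatorname{Y})$ and, by minimality, $|Du|_{p}\le G$. We also need $u\in L_{p}(\operatorname{X},\operatorname{Y})$ to be separably valued; this is given by hypothesis. The main obstacle is the $L_{p'}$-boundedness of the functional $\ell$ in Step 2, which is exactly where the definition of test plans via bounded compression and $p'$-integrable speed is needed. This duality computation would be checked carefully; the remaining steps are routine.
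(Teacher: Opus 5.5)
Your argument is correct. The paper does not prove this proposition itself but imports it from \cite{GT} (Proposition 2.10 there), and your route is the standard proof of that result: reduce to the scalar compositions $h\circ u_{n}$, use $|D(h\circ u_{n})|_{p}\le |Du_{n}|_{p}$, and apply weak closure of $p$-weak upper gradients, which you reprove directly from the $p'$-test-plan definition via the $L_{p'}$-boundedness of $\phi\mapsto\int\int_{0}^{1}\phi(\gamma_{t})|\dot\gamma_{t}|\,dt\,d\pi$.

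One small imprecision: when $\mu(\operatorname{X})=\infty$, subtracting the constant $h(\underline{y})$ can change membership in $W^{1}_{p}(\operatorname{X})$. You do not need that claim, though, since $h-h(\underline{y})\in\operatorname{LIP}_{1}(\operatorname{Y})$ and the hypothesis applies to it directly; the definition has to be read with normalized $h$ anyway.
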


The observation made above leads to a useful lemma.
\begin{Lm}
\label{Lm.auxiliary_norm}
There is a constant $C > 0$ such that for each $u \in W^{1}_{p}(\operatorname{X},\operatorname{Y})$, it holds
\begin{equation}
\frac{1}{C}|Du|_{p}(x) \le \overline{\operatorname{e}}_{p}[u](x) \le C|Du|_{p}(x) \quad \hbox{for $\mu$-a.e. $x \in \operatorname{X}$}.
\end{equation}
\end{Lm}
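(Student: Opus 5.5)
We prove the two inequalities separately. In both directions we first reduce to the case where $u$ is Lipschitz, using the Lusin--Lipschitz property (Proposition \ref{Prop.Lusin-Lipschitz}) and the locality of the upper energy (Proposition \ref{Prop.locality}). For the minimal weak upper gradient we also need locality: if $u_{1}|_{U}=u_{2}|_{U}$, then $|D(h\circ u_{1})|_{p}=|D(h\circ u_{2})|_{p}$ $\mu$-a.e. on $U$ for every $h \in \operatorname{LIP}_{1}(\operatorname{Y})$. Taking the essential supremum over $h$ gives $|Du_{1}|_{p}=|Du_{2}|_{p}$ $\mu$-a.e. on $U$.

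To carry out the reduction, let $\{U_{i}\}$ be the Lusin--Lipschitz decomposition of $u$. Each restriction $u|_{U_{i}}$ has separable range, so via the Fr\'echet imbedding (Lemma \ref{Lm.Frechet}) it takes values in $l_{\infty}$. By McShane--Whitney (Lemma \ref{Lm.McShane_Whitney}) it extends to a Lipschitz map $u_{i}:\operatorname{X}\to l_{\infty}$. Replacing $\operatorname{Y}$ by $\iota(\operatorname{Y})\cup l_{\infty}$, i.e. working in the complete separable space $\overline{\operatorname{span}}\,\iota(u(\operatorname{X}))\subset l_{\infty}$, does not change either side of the claim on $U_{i}$. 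Indeed, $\overline{\operatorname{e}}_{p}$ depends only on distances. For $|Du|_{p}$, every $1$-Lipschitz function on $\iota(\operatorname{Y})$ extends to a $1$-Lipschitz function on $l_{\infty}$, and restrictions of $1$-Lipschitz functions stay $1$-Lipschitz. After multiplying $u_{i}$ by a Lipschitz cutoff, which is harmless locally, we may assume $u_{i}\in W^{1}_{p}$. It therefore suffices to prove the claim for a Lipschitz $u$ with values in a separable Banach space.

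For a Lipschitz $u$ we compare both quantities with $\operatorname{lip}[u]$. First, $|Du|_{p}\le \operatorname{lip}[u]$ a.e. This holds because $\operatorname{lip}[h\circ u]\le \operatorname{lip}[u]$ is an upper gradient of $h\circ u$ (Lemma \ref{Lm.upper_gradient}), and upper gradients are $p$-weak upper gradients. Next, $\overline{\operatorname{e}}_{p}[u]\le \operatorname{Lip}[u]\le \operatorname{K}\operatorname{lip}[u]$ a.e., since $\operatorname{e}_{p,r}[u](x)\le \widetilde{\operatorname{var}}_{r}[u](x)$ and by Proposition \ref{Prop.Lip_lip}. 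Conversely, at a.e. $x$ we have $\operatorname{lip}[u](x)\le \operatorname{Lip}[u](x)\lesssim \overline{\operatorname{e}}_{p}[u](x)$. To see this, use the doubling property and the Lipschitz continuity of $u$: for every $y\in B_{r}(x)$ and small $\eta>0$, the ball $B_{\eta r}(y)$ has measure comparable to that of $B_{r}(x)$, with a constant depending on $\eta$ through Lemma \ref{Lm.volume_decay}, and $\operatorname{d}(u(y),u(z))\le L\eta r$ for $z\in B_{\eta r}(y)$. Averaging over $z$ gives $\operatorname{d}(u(x),u(y))\le C_{\eta}\, r\,\operatorname{e}_{p,2r}[u](x)+L\eta r$. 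The additive $L\eta r$ term is the delicate point: it is not small relative to the energy at points where the energy is small. To remove it, I would first try to replace $L$ by $\operatorname{Lip}[u]$ locally, running the chaining argument of Proposition \ref{Prop.Lip_lip} at scale $\eta r$ so that the error becomes $\eta\,\operatorname{Lip}[u](x)$. Then absorb this error by choosing $\eta$ small, using $\operatorname{Lip}[u]\le \operatorname{K}\operatorname{lip}[u]<\infty$.

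It then remains to show $\operatorname{lip}[u]\le C|Du|_{p}$ a.e. for Lipschitz $u$. This is the main obstacle, since it is the step where the upper-gradient information must control the pointwise behaviour. I would use Proposition \ref{Prop.Sobolev_imb} applied to $h\circ u$ for each $h$ in the countable family $h_{n}=\operatorname{d}(\cdot,y_{n})$, which gives $\operatorname{e}_{p,r}[h_{n}\circ u](x)\le C\operatorname{M}^{\le\lambda r}_{q}[|Du|_{p}](x)$. As $r\to0$ the right-hand side tends to $C|Du|_{p}(x)$ at a.e. $x$, by Lemma \ref{Lm.differentiation} applied to $|Du|_{p}^{q}$. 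On the left, for $y\in B_{r}(x)$ we take $h=\operatorname{d}(\cdot,u(x))$ approximated by some $h_{n}$, which yields a pointwise lower bound $\operatorname{d}(u(x),u(y))$ up to an error $2\operatorname{d}(y_{n},u(x))$. The difficulty is that $n$ must be chosen depending on $x$ while the exceptional null set must be fixed in advance. I would handle this with a density argument: fix a countable dense set, use the Lipschitz continuity of $u$ to make the approximation error $\le\epsilon r$ uniformly over a neighbourhood, and then let $\epsilon\to0$. Combining $\operatorname{e}_{p,r}[u](x)\le \sup_{n}\operatorname{e}_{p,r}[h_{n}\circ u](x)+\epsilon$ with the previous inequality gives $\overline{\operatorname{e}}_{p}[u]\le C|Du|_{p}$ directly. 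Together with $|Du|_{p}\le \operatorname{lip}[u]\le C\overline{\operatorname{e}}_{p}[u]$, this completes the proof.
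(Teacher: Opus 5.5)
Your argument is correct, but it follows a different route from the paper's.

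The paper argues globally. The upper bound $\overline{\operatorname{e}}_{p}[u]\le C|Du|_{p}$ is read off from the equivalence of approaches (Proposition \ref{Prop.different_approaches}) together with Lebesgue differentiation. For the lower bound it picks $r_{j}\downarrow 0$ with $\operatorname{e}_{p,r_{j}}[u]\rightharpoonup g$ weakly in $L_{p}$, which is available because $u\in\operatorname{KS}^{1}_{p}$. It proves $g\le\overline{\operatorname{e}}_{p}[u]$ a.e. by an Egorov argument applied to $\sup_{r'\le r}\operatorname{e}_{p,r'}[u]$, and the comparison between such weak limits and $|Du|_{p}$ is left implicit, coming from the Korevaar--Sch\"oen theory of \cite{GT}.

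You work pointwise with the paper's own tools instead. Your upper bound comes from Proposition \ref{Prop.Sobolev_imb} applied to the countable family $h_{n}\circ u$, together with $|D(h_{n}\circ u)|_{p}\le|Du|_{p}$. Your lower bound comes from Lusin--Lipschitz, locality of both sides, and the chain $|Du|_{p}\le\operatorname{lip}[u]\le\operatorname{Lip}[u]\lesssim\overline{\operatorname{e}}_{p}[u]$ for Lipschitz maps. The last link is precisely Theorem \ref{Th.energy_density}, which is proved without this lemma, so you may cite it rather than re-derive it. Your version is longer and needs locality of $|Du|_{p}$, but it is self-contained and makes explicit the step the paper delegates to \cite{GT}. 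The paper's version bypasses the Lusin--Lipschitz reduction and the $\operatorname{Lip}$--$\operatorname{lip}$ machinery altogether.

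Three small points in your write-up.

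First, the upper bound needs neither the Lipschitz reduction nor the detour through $\operatorname{lip}[u]\le C|Du|_{p}$. There is also no real difficulty with $n$ depending on $x$. The exceptional set is a countable union over $n$ and rational $r$, and at every remaining point $\operatorname{e}_{p,r}[u](x)\le\sup_{n}\operatorname{e}_{p,r}[h_{n}\circ u](x)$ holds by density of $\{y_{n}\}$ alone. Normalize $h_{n}:=\operatorname{d}(\cdot,y_{n})-\operatorname{d}(\underline{y},y_{n})$ so that $h_{n}\circ u\in L_{p}$.

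Second, in the lower bound the error term cannot be handled by replacing $L$ by $\operatorname{Lip}[u](x)$ locally, because the pointwise constant at $x$ does not control oscillations on $B_{\eta r}(y)$. What works is the Poincar\'e telescoping on $B_{\eta r}(y)$ combined with Lemma \ref{Lm.differentiation}. This yields $C\eta r(\operatorname{lip}[u](x)+\epsilon)$ with $C$ independent of $\eta$, and only then can you absorb it.

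Third, the McShane--Whitney extension is $l_{\infty}$-valued, not valued in $\overline{\operatorname{span}}\,\iota(u(\operatorname{X}))$. Take the separable set $\overline{u_{i}(\operatorname{X})}$ as the target when you invoke Proposition \ref{Prop.Lip_lip}.
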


\begin{proof}
By Proposition \ref{Prop.different_approaches} and Remark \ref{Rem.differentiation_integarl} we have the following inequality
\begin{equation}
\overline{\operatorname{e}}_{p}[u](x) \le C|Du|_{p}(x) \quad \hbox{for $\mu$-a.e. $x \in \operatorname{X}$},
\end{equation}
where the constant $C > 0$ does not depend on $u$ and $x$.

To prove the opposite estimate we fix a sequence $\{r_{j}\}$ of positive numbers such that
$r_{j} \downarrow 0$, $j \to \infty$ and the sequence $\{\operatorname{e}_{p,r_{j}}[u]\}$ weakly converges to some $g$ in $L_{p}(\operatorname{X})$ (the fact that at least one
such $g$ exists follows from the definition of $\operatorname{KS}^{1}_{p}(\operatorname{X},\operatorname{Y})$, Proposition \ref{Prop.different_approaches}, and the reflexivity of $L_{p}(\operatorname{X})$). We claim that
\begin{equation}
g(x) \le \overline{\operatorname{e}}_{p}[u](x) \quad \hbox{for $\mu$-a.e. $x \in \operatorname{X}$}.
\end{equation}
Indeed, this follows from rather routine arguments. We set $\overline{\operatorname{e}}_{p,r}[ u](x):=\sup_{r' \in (0,r]}\operatorname{e}_{p,r'}[u](x)$ for each
$r > 0$ and each $x \in \operatorname{X}$. Since $\overline{\operatorname{e}}_{p}[u]$ is measurable and $\overline{\operatorname{e}}_{p}[u](x) < +\infty$ for $\mu$-a.e. $x \in \operatorname{X}$,
by the monotonicity of the sequence $\{\overline{\operatorname{e}}_{p,r_{j}}[ u]\}$ and Egorov's theorem, for each set $E \subset \operatorname{X}$
with $\mu(E) \in (0,+\infty)$, for each $\epsilon > 0$ there is a compact set $K \subset E$ such that $\mu(E \setminus K) < \epsilon$ and $\{\overline{\operatorname{e}}_{p,r_{j}}[u]\}$ converges uniformly
to $\overline{\operatorname{e}}_{p}[u]$. Since $\operatorname{e}_{p,r_{j}}[u](x) \le \overline{\operatorname{e}}_{p,r_{j}}[u](x)$ for all $x \in \operatorname{X}$ and all $j \in \mathbb{N}$, by the definition of the weak convergence in $L_{p}(\operatorname{X})$ we get
$$
\int\limits_{K}g(x)\,d\mu(x) \le \int\limits_{K}\overline{\operatorname{e}}_{p}[u](x)\,d\mu(x).
$$
Since $E$ and $K$ were chosen arbitrarily, the claim follows.
\end{proof}

\subsection{Elementary properties of energies}
Throughout this subsection we fix $p \in [1,\infty)$, $\operatorname{X} \in \mathcal{A}_{p}$, and a complete pointed metric space $\operatorname{Y}=(\operatorname{Y},\operatorname{d},\underline{y})$.
Given $u \in L_{p}(\operatorname{X},\operatorname{Y})$, for each $k \in \mathbb{Z}$ and each $r > 0$, we put
\begin{equation}
\label{eqq.image_distance}
\operatorname{S}_{k,r}[u]:=\{x \in \operatorname{X}: r\operatorname{e}_{p,r}[ u](x) \in (2^{-k-1},2^{-k}]\}.
\end{equation}
Furthermore, given $k \in \mathbb{Z}$, it will be convenient to work with the set
\begin{equation}
\label{eqq.image_distance'}
\operatorname{S}_{k}[u]:=\bigcup\limits_{r > 0}\operatorname{S}_{k,r}[u].
\end{equation}

We should make an important technical observation.
\begin{Lm}
\label{Lm.exhaustive}
If $(\operatorname{X},\rho)$ is geodesic at the scale $R$ for some $R > 0$, then, for each $u \in L_{p}(\operatorname{X},\operatorname{Y})$, the following properties hold:

\begin{itemize}

\item[\((1)\)] $\mu(\operatorname{S}_{k}[u] \setminus \operatorname{S}_{k+j}[u]) = 0$ for each $k \in \mathbb{Z}$ and each $j \in \mathbb{N}$;

\item[\((2)\)] the map $u$ has a constant representative if and only if $\operatorname{S}_{k}[u] = \emptyset$ for all $k \in \mathbb{Z}$;

\item[\((3)\)] if $u$ does not have a constant representative, then
\begin{equation}
\mu(\operatorname{X} \setminus \bigcup\limits_{k \in \mathbb{Z}}\operatorname{S}_{k}[u])=0.
\end{equation}

\end{itemize}

\end{Lm}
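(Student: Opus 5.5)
The idea is to study, for a fixed Borel representative of $u$ (with separable range) and a fixed $x\in\operatorname{X}$, the scalar function
\begin{equation}
\notag
\varphi_{x}(r):=r\operatorname{e}_{p,r}[u](x)=\Bigl(\fint\limits_{B_{r}(x)}\operatorname{d}^{p}(u(x'),u(x))\,d\mu(x')\Bigr)^{\frac{1}{p}}, \quad r>0.
\end{equation}
By definition, $x\in\operatorname{S}_{k}[u]$ if and only if $\varphi_{x}$ takes a value in $(2^{-k-1},2^{-k}]$. Moreover $\bigcup_{k\in\mathbb{Z}}(2^{-k-1},2^{-k}]=(0,+\infty)$. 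Since $u\in L_{p}(\operatorname{X},\operatorname{Y})$ and balls have positive finite measure, $\varphi_{x}(r)<+\infty$ for every $x$ and $r$. All three items then reduce to statements about the range of $\varphi_{x}$.

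\emph{Items (2) and (3).} If $u$ has a constant representative, we use it and get $\varphi_{x}\equiv0$ for every $x$, so every $\operatorname{S}_{k}[u]$ is empty. Conversely, suppose some $x$ lies outside $\bigcup_{k}\operatorname{S}_{k}[u]$. Then $\varphi_{x}(r)\notin(0,\infty)$ for all $r$, i.e. $\varphi_{x}\equiv0$. Hence $u(x')=u(x)$ for $\mu$-a.e. $x'\in B_{n}(x)$ for every $n\in\mathbb{N}$. Since $\operatorname{X}=\bigcup_{n}B_{n}(x)$, $u$ equals the constant $u(x)$ $\mu$-a.e. This gives (2). It also gives (3), in fact in the stronger form that $\operatorname{X}\setminus\bigcup_{k}\operatorname{S}_{k}[u]$ is empty whenever $u$ is not a.e. constant.

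\emph{Item (1).} By Lemma \ref{Lm.Lebesgue_point}, for $\mu$-a.e. $x$ we have $\varphi_{x}(r)\to0$ as $r\to0$; we restrict to such $x$. Let $x\in\operatorname{S}_{k}[u]$, so $\varphi_{x}(r_{0})>2^{-k-1}$ for some $r_{0}$. We want $r\in(0,r_{0})$ with $\varphi_{x}(r)\in(2^{-k-j-1},2^{-k-j}]$, and we obtain it from the intermediate value theorem applied to $\varphi_{x}$ on $[\rho,r_{0}]$, where $\rho$ is chosen so small that $\varphi_{x}(\rho)<2^{-k-j-1}$. This needs continuity of $\varphi_{x}$. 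Both $r\mapsto\mu(B_{r}(x))$ and $r\mapsto\int_{B_{r}(x)}\operatorname{d}^{p}(u(x'),u(x))\,d\mu(x')$ are left-continuous, because open balls increase to the open ball. Their right limits are the corresponding quantities over the closed ball $\overline{B}_{r}(x)$. So continuity at $r$ is equivalent to $\mu(\{x':\rho(x,x')=r\})=0$, and then absolute continuity of the integral of the locally integrable function $\operatorname{d}^{p}(u(\cdot),u(x))$ takes care of the numerator. For $r<R$, the annulus estimate of Remark \ref{Rem.volume_decay}, which is where the geodesic hypothesis enters, gives $\mu(B_{r}(x)\setminus B_{r(1-\epsilon)}(x))\le C\epsilon^{\beta}\mu(B_{r}(x))$. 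Applying it at a radius $r'\in(r,R)$ slightly larger than $r$ shows that spheres of radius $r<R$ are $\mu$-null. Thus $\varphi_{x}$ is continuous on $(0,R)$.

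\emph{Main obstacle.} The difficulty is the case where the only radii $r_{0}$ witnessing $x\in\operatorname{S}_{k}[u]$ satisfy $r_{0}\ge R$, since $\varphi_{x}$ may jump there. I would first try to show that spheres of every radius are $\mu$-null. The route I would try is a chaining argument: points on $\{\rho(x,\cdot)=r\}$ are reached through geodesic pieces of length $<R$, which produce annuli of relative small measure at scale $R$ around such points. If that fails, the fallback is to exploit the freedom in Proposition \ref{Prop.Poincare_forgotten}. By the biLipschitz invariance of Remark \ref{Rem.Sobolev_forgotten}, one can take $R$ as large as needed, provided the relevant witnessing radius is below $R$; the intermediate value argument on $(0,R)$ then yields the required $r$.
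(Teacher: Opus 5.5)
You handle (2) and (3) correctly; in fact (3) holds with an empty exceptional set. For (1) your argument is the paper's: Lebesgue points, continuity of $r\mapsto r\operatorname{e}_{p,r}[u](x)$ on $(0,R)$ via Remark~\ref{Rem.volume_decay}, and the intermediate value theorem. The paper's proof stops there and never addresses radii $\ge R$. You correctly isolate the case in which every radius witnessing $x\in\operatorname{S}_{k}[u]$ is $\ge R$, but you leave it open, so (1) is not proved.

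It also cannot be closed, because (1) is false as stated. Take $\operatorname{X}=[0,10]$ with $\rho(x,y)=\min\{|x-y|,1\}$ and $\mu=\mathcal{L}^{1}\lfloor_{[0,10]}$. For $r\le 1$ the ball $B_{r}(x)$ is the interval $(x-r,x+r)\cap[0,10]$, and for $r>1$ it is all of $\operatorname{X}$. From this one checks directly:
\begin{itemize}
\item $\operatorname{X}$ is complete and $\operatorname{D}_{\mu}\le 20$;
\item the $(1,p)$-Poincar\'e inequality holds with $\lambda=1$, $\operatorname{c}=5$, by the one-dimensional Poincar\'e inequality on intervals and $\operatorname{lip}[f]=2|f'|$ a.e.;
\item $(\operatorname{X},\rho)$ is geodesic at scale $R=\frac12$.
\end{itemize}
So $\operatorname{X}\in\mathcal{A}_{p}$. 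Let $\operatorname{Y}=\mathbb{R}$ and $u=\chi_{(5,10]}$. For $x\in[0,4)$ we have $u\equiv u(x)=0$ on $B_{r}(x)$ for all $r\le 1$, whereas $r\operatorname{e}_{p,r}[u](x)=2^{-1/p}$ for all $r>1$. Let $k$ be such that $2^{-1/p}\in(2^{-k-1},2^{-k}]$ ($k=0$ for $p>1$). Then $[0,4)\subset\operatorname{S}_{k}[u]\setminus\operatorname{S}_{k+1}[u]$, a set of measure $4$. Changing the representative of $u$ alters the sets $\operatorname{S}_{k}[u]$ only by null sets, so this is a genuine counterexample.

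The same example rules out both of your remedies. First, the sphere $\{x':\rho(x,x')=1\}=\{x':|x-x'|\ge 1\}$ has measure at least $8$. So spheres of radius $\ge R$ need not be $\mu$-null, and no chaining argument can show otherwise. Second, passing to a biLipschitz equivalent metric via Proposition~\ref{Prop.Poincare_forgotten} changes the balls, and hence the sets $\operatorname{S}_{k}[u]$ themselves. For the Euclidean metric on $[0,10]$ the offending points disappear, so this proves a statement about a different space. Remark~\ref{Rem.Sobolev_forgotten} concerns $W^{1}_{p}$ and $|Du|_{p}$, not $\operatorname{S}_{k}[u]$.

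What your argument (and the paper's) actually proves is one of two corrected versions:
\begin{itemize}
\item (1) with $\operatorname{S}_{k}[u]$ replaced by $\bigcup_{0<r<R}\operatorname{S}_{k,r}[u]$;
\item (1) as stated, under the stronger hypothesis that $(\operatorname{X},\rho)$ is geodesic at every scale, since then all spheres are $\mu$-null.
\end{itemize}
The lemma has to be restated in one of these forms.
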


\begin{proof}
Note that (1) is obvious if $\operatorname{S}_{k}[u] = \emptyset$. Otherwise, by Lemma \ref{Lm.Lebesgue_point} we have $r\operatorname{e}_{p,r}[u](x) \to 0$, $r \to +0$
for $\mu$-a.e. $x \in \operatorname{S}_{k}[u]$. It remains to note that the map $r \mapsto r\operatorname{e}_{p,r}[u](x)$ is continuous on $(0,R)$ (this is an elementary consequence
of Remark \ref{Rem.volume_decay}).
Property (2) is obvious, whereas to prove (3) it is sufficient to note that for each $x \in \operatorname{X}$, there are $k \in \mathbb{Z}$ and $r > 0$ such that $r\operatorname{e}_{p,r}[u](x) \in (2^{-k-1},2^{-k}]$.
\end{proof}

If $x \in \operatorname{X}$, then we put
\begin{equation}
\label{eqq.minimal_radius}
r_{k}[u](x):=
\begin{cases}
\inf\{r > 0:r\operatorname{e}_{p,r}[u](x) \in (2^{-k-1},2^{-k}]\}, \quad x \in \operatorname{S}_{k}[u];\\
+\infty, \quad x \notin \operatorname{S}_{k}[u].
\end{cases}
\end{equation}

\begin{Remark}
\label{Rem.informal_explanation}
Given $r > 0$, the set $\operatorname{S}_{k,r}[u]$ can be informally considered as the set of all centers of balls with radii $r$ on which
the average oscillations of $ u$ are approximately equal to $2^{-k}$. At the same time the parameters $r_{k}[u](x)$, $x \in \operatorname{X}$, allow
to treat the minimal radii of balls on each of which the average oscillations of $ u$ are comparable with $2^{-k}$.
\end{Remark}

Given $u \in W_{p}^{1}(\operatorname{X},\operatorname{Y})$, for each $j \in \mathbb{Z}$, we set
\begin{equation}
\label{eqq.Lebesgue_sets_of_differential}
\mathcal{D}_{p,j}[u]:=\{x \in \operatorname{X}:|Du|_{p}(x) \in (2^{j},2^{j+1}]\}, \quad \mathcal{D}_{p}[u]:=\bigcup\limits_{j \in \mathbb{Z}}\mathcal{D}_{p,j}[u].
\end{equation}
\begin{Remark}
\label{Rem.Lebesgue_sets_of_differential}
Strictly speaking the sets $\mathcal{D}_{p,j}[u]$, $j \in \mathbb{Z}$, and the set $\mathcal{D}_{p}[u]$ are well defined only up to $\mu$-negligible sets. However, it will
be irrelevant for the all subsequent results.
\end{Remark}

The following observation will be very important for constructing the Lipschitz function $\underline{h}_{k,r}$ in Theorem \ref{Th.main_first}.
\begin{Prop}
\label{Prop.minimal_radius_tends_to_zero}
If the metric space $(\operatorname{X},\rho)$ is geodesic at the scale $R$ for some $R > 0$, then for each $u \in W_{p}^{1}(\operatorname{X},\operatorname{Y})$ it holds
\begin{equation}
\notag
r_{k}[u](x) \downarrow 0, \quad k \to +\infty \qquad \hbox{for $\mu$-a.e. $x \in \mathcal{D}_{p}[u]$}.
\end{equation}
\end{Prop}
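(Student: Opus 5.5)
We need $r_k[u](x)\to 0$ as $k\to+\infty$ for a.e. $x$ with $|Du|_p(x)>0$. The idea is to use the lower bound of Lemma \ref{Lm.auxiliary_norm} together with Lemma \ref{Lm.Lebesgue_point}: at a.e. point of $\mathcal{D}_p[u]$ the function $r\mapsto r\operatorname{e}_{p,r}[u](x)$ is continuous on $(0,R)$, tends to $0$ as $r\to0$, and is not identically zero near $0$. Continuity follows from Remark \ref{Rem.volume_decay}, using that $(\operatorname{X},\rho)$ is geodesic at scale $R$.

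First fix the full-measure set $\underline{\operatorname{X}}\subset\mathcal{D}_p[u]$ of points $x$ such that:
\begin{itemize}
\item[\((a)\)] $\overline{\operatorname{e}}_p[u](x)\ge \frac1C|Du|_p(x)>0$, by Lemma \ref{Lm.auxiliary_norm};
\item[\((b)\)] $r\operatorname{e}_{p,r}[u](x)\to0$ as $r\to0$, by Lemma \ref{Lm.Lebesgue_point} applied to the fixed representative;
\item[\((c)\)] $\phi_x(r):=r\operatorname{e}_{p,r}[u](x)$ is continuous on $(0,R)$.
\end{itemize}

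Now fix $x\in\underline{\operatorname{X}}$ and $\delta\in(0,R)$. By (a), $\operatorname{e}_{p,r}[u](x)>0$ for some $r<\delta$, and hence $\phi_x(r_0)=:a>0$ for some $r_0<\delta$. By (b) and (c), $\phi_x$ attains every value in $(0,a]$ on $(0,r_0]$ by the intermediate value theorem. Let $k_0$ be such that $2^{-k_0}\le a$. Then for every $k\ge k_0$ the interval $(2^{-k-1},2^{-k}]\subset(0,a]$ is hit by $\phi_x$ at some $r\in(0,r_0]$. Consequently $x\in\operatorname{S}_k[u]$ and $r_k[u](x)\le r_0<\delta$ for all $k\ge k_0$. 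Since $\delta$ is arbitrary, $r_k[u](x)\to0$.

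For monotonicity, observe that the infimum defining $r_k[u](x)$ is really the first hitting radius of the level band. Let $t_k:=\inf\{r:\phi_x(r)>2^{-k-1}\}$. By continuity and $\phi_x\to0$, the first time $\phi_x$ enters $(2^{-k-1},2^{-k}]$ is at $r=t_k$, with $\phi_x$ approaching $2^{-k-1}$ from above there. More precisely, $r_k[u](x)=t_k$ up to the boundary convention: the infimum equals $t_k$, since for $r<t_k$ we have $\phi_x\le2^{-k-1}$, and points just above $t_k$ give values in the band. Since $t_k$ is clearly nonincreasing in $k$, the convergence is monotone, giving $\downarrow$.

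The main obstacle is step (c), the continuity of $r\mapsto \operatorname{e}_{p,r}[u](x)$. The numerator $\int_{B_r(x)}\operatorname{d}^p(u(x'),u(x))\,d\mu$ is left-continuous by monotone convergence, but right-continuity needs $\mu(\partial B_r(x))=0$, which is where Remark \ref{Rem.volume_decay} (the annulus decay in geodesic spaces) enters. Together with the absolute continuity of the integral, this also makes $\mu(B_r(x))$ continuous. If continuity were only one-sided, I would replace the intermediate value argument by a direct argument: for small $r$ we have $\phi_x(r)<2^{-k-1}$ (by (b)) while $\phi_x(r_0)>2^{-k}$, and I would use the one-sided limits to locate a radius in the band.
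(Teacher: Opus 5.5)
Your proof is correct, but the convergence step takes a different route from the paper's.

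\textbf{How the paper argues.} The paper first places a.e.\ point of $\mathcal{D}_p[u]$ in the auxiliary set $\mathcal{E}_p[u]$. This set records three things: eventual membership in $\operatorname{S}_j[u]$, which is taken from Lemma~\ref{Lm.exhaustive}; $\overline{\operatorname{e}}_p[u](x)>0$; and $\phi_x(r)\to 0$. It then argues by contradiction. Suppose $r_{k_j}[u](x)\in(\underline r,2\underline r)$ along a subsequence. Then there are radii $r'_j\in(\underline r,2\underline r)$ with $\phi_x(r'_j)\le 2^{-k_j}$. Monotonicity of $r\mapsto\int_{B_r(x)}\operatorname{d}^p(u(x'),u(x))\,d\mu(x')$ together with doubling gives $\operatorname{e}_{p,\underline r}[u](x)\lesssim 2^{-k_j}/\underline r\to 0$. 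Hence $u\equiv u(x)$ a.e.\ on $B_{\underline r}(x)$, contradicting $\overline{\operatorname{e}}_p[u](x)>0$. Monotonicity of $k\mapsto r_k[u](x)$ is then only asserted as a consequence of continuity.

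\textbf{How you argue, and the comparison.} You apply the intermediate value theorem starting from a radius $r_0<\delta$ with $\phi_x(r_0)>0$. This gives, in one stroke, membership in $\operatorname{S}_k[u]$ for all large $k$ and the bound $r_k[u](x)\le r_0$. You then actually prove monotonicity through the identification $r_k[u](x)=t_k$. The paper's contradiction step has the small merit of not using continuity of $\phi_x$ itself. However, continuity is needed anyway, both for Lemma~\ref{Lm.exhaustive} and for monotonicity. So your argument is the more economical and transparent one, and it avoids having to extract a subsequence of radii trapped in a fixed dyadic window.

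\textbf{Points of precision.}
\begin{itemize}
\item Continuity of $\phi_x$ at $t_k$ requires $t_k<R$. This holds because $t_k\le r_k[u](x)\le r_0<R$ for $k\ge k_0$; say so explicitly.
\item The phrase ``points just above $t_k$ give values in the band'' should read ``there are radii arbitrarily close to $t_k$ from above with values in the band''. These are the radii with $\phi_x>2^{-k-1}$, and they lie below $2^{-k}$ by continuity, since $\phi_x(t_k)=2^{-k-1}$.
\item Your fallback for one-sided continuity is unnecessary. By the annulus estimate in Remark~\ref{Rem.volume_decay}, the spheres $\{x':\rho(x,x')=r\}$ with $r<R$ are $\mu$-null. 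Hence both $\mu(B_r(x))$ and $\int_{B_r(x)}\operatorname{d}^p(u(x'),u(x))\,d\mu(x')$ are continuous on $(0,R)$.
\end{itemize}
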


\begin{proof}
Given $p \in [1,\infty)$, we put
\begin{equation}
\label{eqq.E_set}
\mathcal{E}_{p}[u]:=\bigcup\limits_{k \in \mathbb{Z}}\bigcap\limits_{j \geq k}S_{j}[u] \cap \{x \in \operatorname{X}: \overline{\operatorname{e}}_{p}[u](x) > 0 \hbox{ and }\lim\limits_{r \to +0}r\operatorname{e}_{p,r}[u](x)=0\}.
\end{equation}
Note that by Lemma \ref{Lm.Lebesgue_point} and Lemma
\ref{Lm.auxiliary_norm} we clearly have
\begin{equation}
\label{eqq.different_level_sets}
\mu(\mathcal{E}_{p}[u] \triangle \mathcal{D}_{p}[u]) = 0.
\end{equation}
Fix an arbitrary $\underline{x} \in \mathcal{E}_{p}[u]$.
Assuming that $\varlimsup_{k \to \infty}r_{k}[u](\underline{x}) > 0$ we get existence of a number $\underline{r} > 0$
together with a subsequence $\{r_{k_{j}}[u](\underline{x})\} \subset (\underline{r},2\underline{r})$. By \eqref{eqq.minimal_radius}
this implies existence of a sequence $\{r'_{j}\} \subset (\underline{r},2\underline{r})$ such that
\begin{equation}
\notag
\operatorname{e}_{p,\underline{r}}[u](\underline{x}) \le 2\frac{\mu(B_{2\underline{r}}(\underline{x}))}{\mu(B_{\underline{r}}(\underline{x}))}\operatorname{e}_{p,r'_{j}}[u](\underline{x}) \le \operatorname{D}_{\mu}(\underline{r})\frac{2^{-k_{j}+2}}{\underline{r}} \quad \hbox{for all $j \in \mathbb{N}$}.
\end{equation}
Hence, $\operatorname{e}_{p,\underline{r}}[u](\underline{x}) = 0$. Thus, $u(x)=u(\underline{x})$ for $\mu$-a.e. $x \in B_{\underline{r}}(\underline{x})$ and, consequently,
$\overline{\operatorname{e}}_{p}[u](\underline{x}) = 0$. This leads to the contradiction with \eqref{eqq.E_set}.
As a result, we get
\begin{equation}
\label{eqq.limit_forgotten}
\lim\limits_{k \to \infty}r_{k}[u](\underline{x}) =0 \quad \hbox{for each $\underline{x} \in \mathcal{E}_{p}[u]$}.
\end{equation}
Furthermore, given $\underline{x} \in \mathcal{E}_{p}[u]$, combining \eqref{eqq.limit_forgotten} with the continuity of the map $r \mapsto r\operatorname{e}_{p,r}[u](\underline{x})$ on $(0,R)$,
it is easy to see that the sequence $\{r_{k}[u](\underline{x})\}_{k=1}^{\infty}$ is nonincreasing.
Finally, raking into account \eqref{eqq.different_level_sets}, we complete the proof.
\end{proof}

\section{Nice properties of energy densities}
Informally speaking, the aim of the present section is to establish an analog of Proposition \ref{Prop.Lip_lip} for energy densities of Lipschitz maps.
The ideological core is quite similar to that of given in Section 6 of \cite{KlMac}. Of course, the results of this section are not surprising for the experts, however it is difficult to find
the precise analogs in the literature. Hence, we give the details for the completeness of our exposition.

\begin{Th}
\label{Th.energy_density}
Let $p \in [1,\infty)$ and $\operatorname{X} \in \mathcal{A}_{p}$. Let $\operatorname{Y}=(\operatorname{Y},\operatorname{d},\underline{y})$ be a pointed complete metric space.
Then there is a constant $\operatorname{K}_{p} > 0$
such that if $ u \in \operatorname{LIP}(\operatorname{X},\operatorname{Y})$, then
\begin{equation}
\label{eqq.energy_densities_comparison}
\overline{\operatorname{e}}_{p}[u](x) \le \operatorname{Lip}[u](x) \le \operatorname{K}_{p} \underline{\operatorname{e}}_{1}[u](x) \quad \hbox{for $\mu$-a.e. $x \in \operatorname{X}$}.
\end{equation}
\end{Th}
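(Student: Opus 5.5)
The first inequality in \eqref{eqq.energy_densities_comparison} is elementary and holds at every point. For $u \in \operatorname{LIP}(\operatorname{X},\operatorname{Y})$, $\underline{x} \in \operatorname{X}$ and $r > 0$, every $x' \in B_{r}(\underline{x})$ satisfies
\[
\operatorname{d}(u(x'),u(\underline{x}))/r \le \widetilde{\operatorname{var}}_{r}[u](\underline{x}) \le \operatorname{var}_{r}[u](\underline{x})
\]
by Remark \ref{Rem.different_variations}. Averaging the $p$-th power over $B_{r}(\underline{x})$ gives $\operatorname{e}_{p,r}[u](\underline{x}) \le \operatorname{var}_{r}[u](\underline{x})$. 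Taking $\varlimsup_{r\to 0}$ yields $\overline{\operatorname{e}}_{p}[u] \le \operatorname{Lip}[u]$. The real content is therefore the second inequality, and for it I would follow the scheme of Section 6 in \cite{KlMac}.

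\emph{Reductions.} By Proposition \ref{Prop.Poincare_forgotten} and Remark \ref{Rem.Sobolev_forgotten}, all quantities change only by bi-Lipschitz constants under a change of metric. So I may assume $(\operatorname{X},\rho)$ is geodesic at some scale $R$, at the cost of enlarging $\operatorname{K}_{p}$. Since $u(\operatorname{X})$ is separable when $u$ is Lipschitz and $\operatorname{X}$ is separable, Proposition \ref{Prop.Lip_lip} applies and gives $\operatorname{Lip}[u] \le \operatorname{K}\operatorname{lip}[u]$ $\mu$-a.e. It therefore suffices to show
\[
\operatorname{lip}[u](\underline{x}) \le C\,\underline{\operatorname{e}}_{1}[u](\underline{x}) \quad \hbox{for $\mu$-a.e. $\underline{x}$}.
\]

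\emph{Main estimate.} The heuristic is this: if $\underline{\operatorname{e}}_{1}[u](\underline{x}) < a$, then along a sequence $r_{j}\to 0$ the average distance from $u(\underline{x})$ on $B_{r_{j}}(\underline{x})$ is $< a r_{j}$. One wants to upgrade this to a sup bound $\operatorname{var}_{\delta r_{j}}[u](\underline{x}) \lesssim a$, which would bound $\operatorname{lip}[u]$. For scalar $u$ this is exactly the Keith/Korte--Shanmugalingam argument. Following \cite{KlMac}, I would:
\begin{itemize}
\item[(a)] Use the fact that $\mu$-a.e. $\underline{x}$ is a Lebesgue point of $\operatorname{Lip}[u]$ (Lemma \ref{Lm.differentiation}), so that $\operatorname{Lip}[u]$ is almost constant, $\approx L:=\operatorname{Lip}[u](\underline{x})$, on most of $B_{r}(\underline{x})$ for small $r$.
\item[(b)] Argue by contradiction. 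If the average oscillation on $B_{r_j}(\underline{x})$ is $\le a r_{j}$ with $a \ll L$, then the set of points $x'$ with $\operatorname{d}(u(x'),u(\underline{x})) \le \sqrt{a/L}\,L r_{j}$ occupies most of the ball, by Chebyshev.
\item[(c)] At points where $\operatorname{Lip}[u]\approx L$ and $\operatorname{Lip}\le \operatorname{K}\operatorname{lip}$, one has $\operatorname{lip}[u](x')\ge L/\operatorname{K}$. So for each such $x'$ there are points at distance $\sim s$ with $u$-increment $\gtrsim (L/\operatorname{K}) s$.
\end{itemize}

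\emph{Expected obstacle.} Step (c) is where I expect the main difficulty: turning pointwise $\operatorname{lip}$ information into a volume statement contradicting small average oscillation. I would try first to use a scalar test function $h=\operatorname{d}(\cdot,u(\underline{x}))$ together with the variant of the Poincar\'e inequality (Proposition \ref{Prop.Poincare_maps}) and the $\operatorname{Lip}$--$\operatorname{lip}$ comparison at a fixed scale. The monotone-in-radius structure of $h\circ u$ should give a lower bound on its average at scale $r_{j}$ of order $L r_{j}/C$. Specifically, $h\circ u$ is $L_u$-Lipschitz and $0$ at $\underline{x}$, and the maximum of $h\circ u$ on $B_{\delta r_j}$ is attained near the sphere. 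The geodesic structure and the annulus estimate of Remark \ref{Rem.volume_decay} then guarantee a fixed fraction of $\mu(B_{r_j})$ on which $h\circ u \gtrsim L r_j$, provided $\operatorname{var}_{r_j}[u](\underline{x})\gtrsim L$, which holds by $\operatorname{Lip}\le \operatorname{K}\operatorname{lip}$. The subtlety is that a large value at one point must spread to a set of comparable measure. I would achieve this with the Lipschitz bound in a ball of radius $\epsilon r_{j}$ around the far point. The apparent loss $\epsilon^{\theta}$ from Lemma \ref{Lm.volume_decay} is absorbed, since $\epsilon$ only needs to be a fixed fraction depending on $\operatorname{K}$ once $L_u$ is localized to $\approx L$ via (a). This yields $\operatorname{e}_{1,r_{j}}[u](\underline{x}) \ge L/C$, and hence the second inequality of \eqref{eqq.energy_densities_comparison}.
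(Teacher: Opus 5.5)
\paragraph{Verdict: genuine gap in the spreading step.}
Your first inequality is correct. So is the reduction to proving $\operatorname{lip}[u]\le C\,\underline{\operatorname{e}}_{1}[u]$ $\mu$-a.e.\ via Proposition \ref{Prop.Lip_lip}, using separability of $u(\operatorname{X})$. The mechanism you settle on is also the right one: take a far point $y$ with $\operatorname{d}(u(y),u(\underline{x}))\gtrsim r\operatorname{lip}[u](\underline{x})$ and spread this to a ball of radius comparable to $r$ around it.

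The gap is the spreading step. You need $\operatorname{d}(u(z),u(y))\lesssim \epsilon r\operatorname{lip}[u](\underline{x})$ on $B_{\epsilon r}(y)$ with $\epsilon$ independent of $u$. You get it from ``$L_{u}$ localized to $\approx L$ via (a)'', but (a) only says that $\operatorname{Lip}[u]$ is close to $L$ in mean on balls near $\underline{x}$. That gives no pointwise control of the increments $\operatorname{d}(u(z),u(y))$ between given points, hence no Lipschitz bound on $B_{\epsilon r}(y)$.

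The only pointwise bound you actually have is the global $L_{u}$. It forces $\epsilon\lesssim\operatorname{lip}[u](\underline{x})/L_{u}$, and the volume loss from Lemma \ref{Lm.volume_decay} then gives only $\operatorname{lip}[u](\underline{x})\le C\bigl(L_{u}/\operatorname{lip}[u](\underline{x})\bigr)^{\theta}\underline{\operatorname{e}}_{1}[u](\underline{x})$. That constant depends on $u$ and $\underline{x}$, so no uniform $\operatorname{K}_{p}$ follows. Your appeals to a ``monotone-in-radius structure'' of $\operatorname{d}(u(\cdot),u(\underline{x}))$ and to its maximum lying near the sphere are also unjustified, though not needed.

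\paragraph{The missing ingredient.}
You need Proposition \ref{Prop.Poincare_maps} to turn averaged gradient information into averaged oscillation, together with a telescoping that confines $L_{u}$ to a negligible scale.

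\begin{itemize}
\item Choose $y_{r}$ with $\operatorname{var}_{r}[u](\underline{x})\le 2\operatorname{d}(u(\underline{x}),u(y_{r}))/r$. Then $\operatorname{d}(u(\underline{x}),u(y_{r}))\le\fint_{B_{\delta r}(y_{r})}\operatorname{d}(u(\underline{x}),u(z))\,d\mu(z)+\fint_{B_{\delta r}(y_{r})}\operatorname{d}(u(y_{r}),u(z))\,d\mu(z)$.
\item Estimate the second term by the Poincar\'e inequality on the balls $B_{\delta r2^{-i}}(y_{r})$, $0\le i\le n$. For small $r$ each term is $\lesssim\delta r2^{-i}\operatorname{lip}[u](\underline{x})$, by Lemma \ref{Lm.differentiation} applied to $\operatorname{lip}[u]^{p}$. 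The supremum over off-centre balls in that lemma is exactly what handles $y_{r}\neq\underline{x}$.
\item $L_{u}$ enters only through the remainder, which is $\le\delta 2^{-n}rL_{u}$. Here $n$ may depend on $u$ and $\underline{x}$ without affecting any constant.
\item Fix a universal $\delta$ with $C\delta$ small and use doubling for $B_{\delta r}(y_{r})\subset B_{2r}(\underline{x})$. This gives $\operatorname{var}_{r}[u](\underline{x})\le C(\delta)\operatorname{e}_{1,2r}[u](\underline{x})+\frac{1}{2}\operatorname{lip}[u](\underline{x})$.
\item Take $\varliminf_{r\to0}$ and apply Proposition \ref{Prop.Lip_lip}.
\end{itemize}

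Only this averaged bound on $B_{\delta r}(y_{r})$ is needed, not the sup bound you were aiming for.
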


\begin{proof}
Since the first inequality in \eqref{eqq.energy_densities_comparison}
is obvious, it is sufficient to establish the second one.
We fix $u \in \operatorname{LIP}(\operatorname{X},\operatorname{Y})$ and
split the rest of the proof into several elementary steps.

\textit{Step 1.}
Since $u$ is Lipschitz, it is clear that $\operatorname{lip}[u]$ is locally integrable.
Hence, by Lemma \ref{Lm.differentiation} there is a set $\underline{\operatorname{X}}$ with $\mu(\operatorname{X} \setminus \underline{\operatorname{X}})=0$
such that, for each $\underline{x} \in \underline{\operatorname{X}}$, for each $N \in \mathbb{N}$, we have
\begin{equation}
\label{eqq.4.forgotten}
\lim\limits_{r \to +0}\sup\limits_{x' \in B_{r}(\underline{x})}\fint\limits_{B_{\frac{\Lambda \delta r}{2^{i}}}(x')}\operatorname{lip}u(x)\,dx = \operatorname{lip}u(\underline{x}) \quad \hbox{for all $1 \le i \le N$}.
\end{equation}

\textit{Step 2.}
We fix an arbitrary $\underline{x} \in \operatorname{X}$. If $\operatorname{Lip}[u](\underline{x})=0$, then it is nothing to prove. Hence,
we will assume that $\operatorname{Lip}[u](\underline{x}) > 0$. By Proposition \ref{Prop.Lip_lip} this implies that $\operatorname{lip}[u](\underline{x}) > 0$.

Taking into account Remark \ref{Rem.different_variations}, given $r \in (0,1)$, we fix $y_{r} \in B_{r}(\underline{x})$ such that
\begin{equation}
\notag
\operatorname{var}_{r}[u](\underline{x}) \le 2\frac{\operatorname{d}(u(\underline{x}), u(y_{r}))}{r}.
\end{equation}
For each $\delta \in (0,1)$, we clearly have
\begin{equation}
\label{eqq.4.4'}
\begin{split}
&\operatorname{d}(u(\underline{x}), u(y_{r}))\le \fint\limits_{B_{\delta r}(y_{r})}\operatorname{d}( u(\underline{x}), u(z))\,d\mu(z) + \fint\limits_{B_{\delta r}(y_{r})}\operatorname{d}( u(y_{r}), u(z))\,d\mu(z).
\end{split}
\end{equation}

\textit{Step 3.}
By the standard telescopic-type arguments in combination with Proposition \ref{Prop.Poincare_maps}, for each $n \in \mathbb{N}$, we have
\begin{equation}
\label{eqq.4.5'}
\begin{split}
&\fint\limits_{B_{\delta r}(y_{r})}\operatorname{d}( u(y_{r}), u(z))\,d\mu(z)\\
&\le C\sum\limits_{i=0}^{n}\frac{\delta r}{2^{i}}\Bigl(\fint\limits_{B_{\frac{\Lambda\delta r}{2^{i}}}(y_{r})}(\operatorname{lip}[u](z))^{p}\,d\mu(z)\Bigr)^{\frac{1}{p}} + \fint\limits_{B_{\frac{\delta r}{2^{n}}}(y_{r})}\operatorname{d}(u(y_{r}),u(z))\,d\mu(z),
\end{split}
\end{equation}
where the constant $C > 0$ does not depend on $ u$, $n$, $\underline{x}$ and $y_{r}$.

\textit{Step 4.}
Given $\epsilon > 0$ we take $n \in \mathbb{N}$ so large that $\frac{\delta}{2^{n}} < \epsilon$. As a result, taking into account \eqref{eqq.4.forgotten} and the global Lipschitz continuity of $u$, we can
take $\overline{r}(\underline{x}) > 0$ so small that
\begin{equation}
\label{eqq.4.6'}
\fint\limits_{B_{\delta r}(y_{r})}\operatorname{d}(u(y_{r}),u(z))\,d\mu(z) \le Cr\delta\operatorname{lip}[u](\underline{x}) + \epsilon r L_{u} \quad \hbox{for all $r \in (0,\overline{r}(\underline{x}))$},
\end{equation}
where the constant $C > 0$ is the same as in \eqref{eqq.4.5'}.

\textit{Step 5.}
We take $\delta > 0$ and then $\epsilon > 0$ so small that (we recall that $\operatorname{lip}[u](\underline{x}) > 0$)
\begin{equation}
Cr\delta\operatorname{lip}[u](x) + \epsilon r L_{u} < \frac{r}{2}\operatorname{lip}[u](\underline{x}).
\end{equation}
As a result, combining \eqref{eqq.4.4'}--\eqref{eqq.4.6'} we get for each $r \in (0,\overline{r}(\underline{x}))$
\begin{equation}
\label{eqq.4.7'}
\operatorname{var}_{r}[ u](\underline{x}) \le \fint\limits_{B_{\delta r}(y_{r})}\operatorname{d}( u(\underline{x}), u(z))\,d\mu(z) + \frac{1}{2}\operatorname{lip}[ u](\underline{x})
\le C\operatorname{e}_{1,r}[ u](\underline{x})+\frac{1}{2}\operatorname{lip}[u](\underline{x}),
\end{equation}
where the constant $C > 0$ depends on $\delta > 0$ but doest not depend on $u$, $\epsilon > 0$ and $\underline{x}.$
It remains to take the limit superior in \eqref{eqq.4.7'} and use the obvious estimate $\operatorname{lip}[ u](\underline{x}) \le \operatorname{Lip}[ u](\underline{x})$.

The proof is complete.
\end{proof}

\section{Fine covering observations}
While this section is auxiliary in a sense, it contains a crucial technical tool. We fix a metric measure space $\operatorname{X}=(\operatorname{X},\rho,\mu)$ such that the measure
$\mu$ is uniformly locally doubling. Furthermore, we fix
a metric space $\operatorname{Y}=(\operatorname{Y},\operatorname{d})$.

\begin{Def}
\label{Def.asymptotically_doubling}
We say that a Borel measure $\mathfrak{m}$ on a metric space $(\operatorname{Z},\operatorname{d}_{\operatorname{Z}})$ is uniformly weakly asymptotically doubling if $\mathfrak{m}(B_{r}(z)) > 0$ for all
$(z,r) \in \operatorname{Z} \times (0,+\infty)$ and, for each $\kappa > 1$,
\begin{equation}
\label{eqq.asymptotically_doubling}
\operatorname{AD}_{\mathfrak{m}}(\kappa):=\sup\limits_{z \in \operatorname{Z}}\varliminf\limits_{r \to 0}\frac{\mathfrak{m}(B_{\kappa r}(z))}{\mathfrak{m}(B_{r}(z))} < +\infty.
\end{equation}
Furthermore, a finite family of uniformly weakly asymptotically doubling measures $\{\mathfrak{m}_{i}\}:=\{\mathfrak{m}_{i}\}_{i=1}^{N}$, $N \in \mathbb{N}$, is said to be uniformly weakly equi-asymptotically doubling if,
for each $\kappa > 1$,
\begin{equation}
\label{eqq.asymptotically_doubling'}
\operatorname{AD}_{\{\mathfrak{m}_{i}\}}(\kappa):=\sup\limits_{z \in \operatorname{Z}}\varliminf\limits_{r \to 0}\max\limits_{1\le i \le N}\frac{\mathfrak{m}_{i}(B_{\kappa r}(z))}{\mathfrak{m}_{i}(B_{r}(z))} < +\infty.
\end{equation}
\end{Def}

\begin{Remark}
\label{Remark.degenerate_slowly}
By elementary considerations it is clear that $\operatorname{AD}_{\{\mathfrak{m}_{i}\}}(\kappa) < +\infty$ if and only if
\begin{equation}
\notag
\inf\limits_{z \in \operatorname{Z}}\varlimsup\limits_{r \to 0}\min\limits_{1\le i \le N}\frac{\mathfrak{m}_{i}(B_{r}(z))}{\mathfrak{m}_{i}(B_{\kappa r}(z))} > 0.
\end{equation}
\end{Remark}

Now we are ready to make the crucial observation.
\begin{Prop}
\label{Prop.Reduction_to_Vitali}
Let $u \in \operatorname{LIP}(\operatorname{X},\operatorname{Y})$. Let a nonnegative $f \in L_{1}(\operatorname{X})$,
a Borel representative $\overline{f}$ of $f$, and a disjoint finite family $\{U_{i}\}_{i=1}^{n}$, $n \in \mathbb{N}$, of Borel subsets of $\operatorname{X}$
are such that:

\begin{itemize}
\item[\(($1$)\)] $\mu(U_{i}) > 0$ for all $1 \le i \le n$;

\item[\(($2$)\)] $U_{i} \subset \{x \in \operatorname{X}:\overline{f}(x) > 0\}$ for all $1 \le i \le n$.

\end{itemize}
Then, the family of measures $\{u_{\sharp}(f\chi_{U_{i}}\mu)\}_{i=1}^{n}$
is uniformly weakly equi-asymptotically doubling on the metric space $\operatorname{Z}:=(\bigcup_{i=1}^{N}f(U_{i}),\operatorname{d})$.
\end{Prop}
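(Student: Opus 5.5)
The plan is to prove a polynomial lower bound on the measures $\mathfrak{m}_{i}:=u_{\sharp}(f\chi_{U_{i}}\mu)$ at small scales and combine it with a pigeonhole argument over dyadic-type scales. I read $\operatorname{Z}$ as $\bigcup_{i}u(U_{i})$; the $f(U_{i})$ in the statement appears to be a misprint. Let $L:=L_{u}$. The basic observation is that since $u$ is $L$-Lipschitz, for $z=u(x)$ we have
\begin{equation}
\notag
u^{-1}(B_{r}(z)) \supset B_{r/L}(x).
\end{equation}
Hence, if $x \in U_{i}$,
\begin{equation}
\notag
\mathfrak{m}_{i}(B_{r}(z)) \geq \int\limits_{B_{r/L}(x)\cap U_{i}}f\,d\mu.
\end{equation}
Suppose $x$ is a density point of $U_{i}$ and a Lebesgue point of $f$ with $\overline{f}(x)>0$; such $x$ form a set of full measure in $U_{i}$ by Remark \ref{Rem.differentiation_integarl} and assumption (2). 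Then for $r$ small this gives $\mathfrak{m}_{i}(B_{r}(z)) \geq \tfrac12 \overline{f}(x)\mu(B_{r/L}(x))$. By Lemma \ref{Lm.volume_decay}, this is at least $c(x)\,r^{\theta}$, where $\theta$ depends only on the doubling data and $c(x)>0$ depends on $x$. On the other side, trivially $\mathfrak{m}_{i}(B_{\kappa r}(z)) \le \|f\|_{L_{1}}$.

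Next comes the pigeonhole step. Fix $\kappa>1$ and set $A:=\kappa^{n(\theta+1)}$; this depends only on $\kappa$, $n$ and $\theta$, and in particular not on $z$. Suppose that for some $z$ the liminf in \eqref{eqq.asymptotically_doubling'} exceeds $A$. Then for all $j \ge j_{0}$ (with $r=\kappa^{-j}$) there is some index $i_{j}$ with $\mathfrak{m}_{i_{j}}(B_{\kappa^{-j+1}}(z)) > A\,\mathfrak{m}_{i_{j}}(B_{\kappa^{-j}}(z))$. Among $j_{0}\le j\le J$, some fixed $i$ occurs at least $(J-j_{0})/n$ times. At the remaining scales the corresponding ratio is at least $1$ by monotonicity of $r\mapsto \mathfrak{m}_{i}(B_{r}(z))$. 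Telescoping the ratios therefore gives
\begin{equation}
\notag
\mathfrak{m}_{i}(B_{\kappa^{-J}}(z)) \le \|f\|_{L_{1}}A^{-(J-j_{0})/n} = \|f\|_{L_{1}}\kappa^{-(\theta+1)(J-j_{0})},
\end{equation}
which decays strictly faster than $\kappa^{-J\theta}$. This contradicts the polynomial lower bound of the first step, provided that $z=u(x)$ for a good point $x\in U_{i}$ for this particular $i$. The choice of the exponent $\theta+1$ is exactly what makes the bound on $\operatorname{AD}_{\{\mathfrak{m}_{i}\}}(\kappa)$ uniform in $z$: the constant $c(x)$ enters only in the eventual contradiction, never in $A$. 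The equivalent formulation in Remark \ref{Remark.degenerate_slowly} could be used instead if it is more convenient.

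The main obstacle is the quantifier over all $z\in\operatorname{Z}$ and all indices $i$. For a point $z$ that is not the image of a good point of $U_{i}$, the quantity $\mathfrak{m}_{i}(B_{r}(z))$ may vanish for small $r$. This happens, for instance, when $z\in u(U_{j})$ lies at positive distance from $u(U_{i})$, and then the ratio is $0/0$. I would handle this in two ways. First, I would discard from each $U_{i}$ the $\mu$-null set of bad points, which is harmless since the measures $\mathfrak{m}_{i}$ are unchanged. Second, I would interpret the maximum in \eqref{eqq.asymptotically_doubling'} over those $i$ with $\mathfrak{m}_{i}(B_{r}(z))>0$ for all $r$. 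For such $i$ the pigeonhole argument still applies, using as the lower bound any good point $x'\in U_{i}$ with $u(x')$ arbitrarily close to $z$. This reduction is where I expect care is needed, and it may require shrinking $\operatorname{Z}$ to the images of good points, which is all that the later Vitali-type covering argument needs.
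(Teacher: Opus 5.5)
Your core argument is the paper's: a lower bound $\mathfrak{m}_{i}(B_{r}(z))\ge c\,r^{\theta}$ at the image of a density and Lebesgue point of $U_{i}$ (Lipschitz continuity of $u$ plus Lemma \ref{Lm.volume_decay}), played against the faster decay forced by a large doubling ratio. Your pigeonhole over the scales $\kappa^{-j}$ with $A=\kappa^{n(\theta+1)}$ is needed, and it improves on the paper. The paper's Step~4 passes from ``$\min_{i}$ of the ratios is at most $\kappa^{-\theta'}$ at every small scale'' to $\min_{i}\mathfrak{m}_{i}(B_{r}(z))\le C_{2}(z)r^{\theta'}$. That step tacitly assumes the minimizing index does not depend on $r$. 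For points $z$ that are images of good points of \emph{every} $U_{i}$, your argument is complete.

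The gap is in your last paragraph, and it cannot be closed, because the conclusion is false there. Take an index $i$ with $z\notin u(\underline{U}_{i})$ but $\mathfrak{m}_{i}(B_{r}(z))>0$ for all $r$. Good points $x'\in U_{i}$ with $u(x')$ near $z$ give no bound of the form $c(z)r^{\theta}$. The estimate $\mathfrak{m}_{i}(B_{r}(u(x')))\ge\frac{1}{2}\overline{f}(x')\mu(B_{r/L}(x'))$ holds only below a threshold radius depending on $x'$. As $r\to0$ you must keep changing $x'$, so both the constant and the threshold degenerate.

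Concretely, let $\operatorname{X}=\operatorname{Y}=\mathbb{R}$ with Lebesgue measure, $u=\mathrm{id}$, $f=\chi_{[-1,1]}$, $U_{1}=\bigcup_{k\ge1}[2^{-k},2^{-k}+e^{-2^{k}}]$ and $U_{2}=[-1,1]\setminus U_{1}$.
\begin{itemize}
\item The point $0$ is a density point of $U_{2}$, so it lies in $u(\underline{U}_{2})$ and survives any removal of bad points.
\item $\mathfrak{m}_{1}(B_{r}(0))>0$ for all $r>0$.
\item For $r\in(2^{-K},2^{-K+1}]$ with $K\ge2$, one has $\mathfrak{m}_{1}(B_{r}(0))\le 2e^{-2^{K}}$ and $\mathfrak{m}_{1}(B_{4r}(0))\ge e^{-2^{K-1}}$.
\end{itemize}
Hence the ratio for $i=1$ tends to $+\infty$, and $\operatorname{AD}_{\{\mathfrak{m}_{1},\mathfrak{m}_{2}\}}(4)=+\infty$. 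So neither of your proposed readings rescues the statement: taking the maximum over indices with positive mass fails, and so does shrinking $\operatorname{Z}$ to images of good points.

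What your argument does prove is the uniform bound $\kappa^{n(\theta+1)}$ for $z\in\bigcap_{i}u(\underline{U}_{i})$. More generally, it proves the bound with the maximum restricted to $\{i:z\in u(\underline{U}_{i})\}$. This is also all the paper's proof covers, since it picks $x_{z,i}\in u^{-1}(z)\cap\underline{U}_{i}$ for every $i$. You should state and prove the result in that form instead of claiming the full quantifier over all $z\in\operatorname{Z}$ and all $i$.
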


\begin{proof}
Given $i \in \{1,...,n\}$, let $\underline{U}_{i} \subset U_{i}$ be the set of points of $U_{i}$ where
\begin{equation}
\label{eqq.5.4''}
\underline{U}_{i}:=\{x \in U_{i}:\lim\limits_{r \to +0}\fint\limits_{B_{r}(x)}f(x')\,d\mu(x')=\overline{f}(x)\}.
\end{equation}
By Remark \ref{Rem.differentiation_integarl}
we have $\mu(U_{i} \setminus \underline{U}_{i})=0$.
We split the rest
of the proof into several steps.

\textit{Step 1.} Since $f \in L_{1}(\operatorname{X})$, we clearly have
\begin{equation}
\label{eqq.3.2}
(u_{\sharp}(f\mu))(\operatorname{Z}) < +\infty.
\end{equation}

\textit{Step 2.} Obviously, $ u \in \operatorname{LIP}_{L}(\operatorname{X},\operatorname{Y})$ for some $L > 0$.
Then, for each $z \in \operatorname{Z}$, each $x \in  u^{-1}(z)$, and each $r > 0$, we have
$ u^{-1}(B_{r}(z)) \supset B_{\frac{r}{L}}(x)$ (here and below $B_{r}(z)$ denotes an open ball in $\operatorname{Z}$ centered at $z$ with radius $r$).
Clearly,
\begin{equation}
\overline{f}(x) > 0 \quad \hbox{for each $z \in \operatorname{Z}$, for every $x \in  u^{-1}(z) \cap \bigcup\limits_{i=1}^{N}U_{i}$.}
\end{equation}

Given $z \in \operatorname{Z}$ and $i \in \{1,...,n\}$, we fix $x_{z,i} \in  u^{-1}(z) \cap \underline{U}_{i}$.
Since $\mu$ is uniformly locally doubling, by Lemma \ref{Lm.volume_decay} and \eqref{eqq.5.4''}, given $\theta > \underline{\theta}_{\mu}$,  there is $\delta=\delta(z,\{x_{z,i}\}) > 0$ such that
\begin{equation}
\begin{split}
\label{eqq.3.3}
&\min\limits_{1 \le i \le N}(u_{\sharp}(f\chi_{U_{i}}\mu))(B_{r}(z))\\
&\geq \min\limits_{1 \le i \le N}\int\limits_{B_{\frac{r}{L}}(x_{z,i}) \cap U_{i}}f(x)\,d\mu(x) \geq C\Bigl(\frac{r}{L}\Bigr)^{\theta}\min\limits_{1 \le i \le N}\overline{f}(x_{z,i}) \quad \hbox{for all $r \in (0,\delta)$}.
\end{split}
\end{equation}

\textit{Step 3.}
If we assume on the contrary that the family of measures $\{ u_{\sharp}(f\chi_{U_{i}}\mu)\}_{i=1}^{n}$ fails to satisfy \eqref{eqq.asymptotically_doubling'}, then by Remark \ref{Remark.degenerate_slowly} there is a parameter $\kappa > 1$ such that for each $\theta' > \theta$ there are $z \in \operatorname{Z}$ and small enough $\delta'=\delta'(\theta',z) > 0$ for which
\begin{equation}
\label{eqq.3.4}
\min\limits_{1 \le i \le N}\frac{( u_{\sharp}(f\chi_{U_{i}}\mu))(B_{\frac{r}{\kappa}}(z))}{( u_{\sharp}(f\chi_{U_{i}}\mu))(B_{r}(z))} \le \frac{1}{\kappa^{\theta'}} \quad \hbox{for all $r \le \delta'$}.
\end{equation}

\textit{Step 4.}
Taking into account \eqref{eqq.3.2}, given $z \in \operatorname{Z}$, we deduce that \eqref{eqq.3.3} and \eqref{eqq.3.4} lead to the existence of the
constants $C_{1}(z),C_{2}(z) > 0$ such that (we put $\overline{\delta}:=\min\{\delta,\delta'\}$)
\begin{equation}
\label{eqq.5.6''}
C_{1}(z)r^{\theta} \le \min\limits_{1 \le i \le N}(u_{\sharp}(f\chi_{U_{i}}\mu))(B_{r}(z)) \le C_{2}(z)r^{\theta'} \quad \hbox{for all $r \in (0,\overline{\delta})$}.
\end{equation}
Clearly, both inequalities in \eqref{eqq.5.6''} cannot hold simultaneously for small enough $r > 0$.

We arrive at a contradiction and complete the proof.
\end{proof}

\section{Tracking energies of Sobolev maps by Lipschitz functions}

Throughout the section the following data are assumed to be fixed:

\begin{itemize}
\item[\(($D.6.1$)\)] a parameter $p \in (1,\infty)$;

\item[\(($D.6.2$)\)] a metric measure space $\operatorname{X}=(\operatorname{X},\rho,\mu) \in \mathcal{A}_{p}$;

\item[\(($D.6.3$)\)] a pointed complete metric space $\operatorname{Y}=(\operatorname{Y},\operatorname{d},\underline{y})$.
\end{itemize}

We start with the following elementary and standard fact.

\begin{Lm}
\label{Lm.average}
Let $\operatorname{Z}=(\operatorname{Z},\operatorname{d}_{\operatorname{Z}},\mathfrak{m})$ be a metric measure space.
Then, given a nonnegative function $g \in L_{1}^{\rm loc}(\operatorname{Z})$, for each nonempty ball $B \subset \operatorname{Z}$ with $\mathfrak{m}(B) > 0$ we have
\begin{equation}
\label{eqq.6.1}
\int\limits_{B}g(y)\,d\mathfrak{m}(y) \le 2 \int\limits_{\{g > \frac{g_{B}}{2}\} \cap B}g(y)\,d\mathfrak{m}(y),
\end{equation}
where $g_{B}:=\fint\limits_{B}g(z)\,d\mathfrak{m}(z)$.
\end{Lm}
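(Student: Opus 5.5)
Split the integral over $B$ according to the threshold $\frac{g_{B}}{2}$. Since $g \ge 0$ and $g \in L_{1}^{\rm loc}(\operatorname{Z})$, the quantity $g_{B}$ is a finite nonnegative number, and $B = (\{g > \frac{g_{B}}{2}\} \cap B) \cup (\{g \le \frac{g_{B}}{2}\} \cap B)$ is a disjoint decomposition into measurable sets. The bounded-set hypothesis needed for the averaged integral is implicit here, since $\mathfrak{m}$ gives finite mass to bounded sets and $\mathfrak{m}(B)>0$ is assumed, so $g_{B} = \frac{1}{\mathfrak{m}(B)}\int_{B} g\,d\mathfrak{m}$.

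On the second piece, bound $g$ pointwise by $\frac{g_{B}}{2}$:
\begin{equation}
\notag
\int\limits_{\{g \le \frac{g_{B}}{2}\} \cap B} g(y)\,d\mathfrak{m}(y) \le \frac{g_{B}}{2}\,\mathfrak{m}(B) = \frac{1}{2}\int\limits_{B} g(y)\,d\mathfrak{m}(y).
\end{equation}
Then
\begin{equation}
\notag
\int\limits_{B} g\,d\mathfrak{m} = \int\limits_{\{g > \frac{g_{B}}{2}\} \cap B} g\,d\mathfrak{m} + \int\limits_{\{g \le \frac{g_{B}}{2}\} \cap B} g\,d\mathfrak{m} \le \int\limits_{\{g > \frac{g_{B}}{2}\} \cap B} g\,d\mathfrak{m} + \frac{1}{2}\int\limits_{B} g\,d\mathfrak{m}.
\end{equation}

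Since $\int_{B} g\,d\mathfrak{m} < +\infty$, we may subtract $\frac{1}{2}\int_{B} g\,d\mathfrak{m}$ from both sides and multiply by $2$, which yields \eqref{eqq.6.1}. The only point requiring any care is this finiteness, which holds by local integrability, so that the subtraction is legitimate. The case $g_{B} = 0$ is also covered: there $g = 0$ a.e. on $B$, and both sides of \eqref{eqq.6.1} vanish.
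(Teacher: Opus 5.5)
Your proof is correct and is essentially the paper's argument. The paper also splits $B$ at the threshold $\frac{g_{B}}{2}$ and bounds the lower piece by $\frac{g_{B}}{2}\mathfrak{m}(B)=\frac{1}{2}\int_{B}g\,d\mathfrak{m}$. The only difference is that the paper phrases it as a contradiction (assuming the reverse strict inequality yields $g_{B}<g_{B}$), while you subtract directly, using the finiteness of $\int_{B}g\,d\mathfrak{m}$, which you correctly check.
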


\begin{proof}
Assume on the contrary that the opposite inequality in $\eqref{eqq.6.1}$ holds for some ball $B \subset \operatorname{Z}$ with $\mathfrak{m}(B) > 0$. Hence, we immediately arrive at a contradiction:
\begin{equation}
\notag
\begin{split}
&g_{B}=\fint\limits_{B}g(y)\,d\mathfrak{m}(y)=\frac{1}{\mathfrak{m}(B)}\int\limits_{g > \frac{g_{B}}{2}}g(y)\,d\mathfrak{m}(y) +\frac{1}{\mathfrak{m}(B)}\int\limits_{g \le \frac{g_{B}}{2}}g(y)\,d\mathfrak{m}(y)\\
&< \frac{g_{B}}{2}+\frac{g_{B}}{2}=g_{B}.
\end{split}
\end{equation}
The proof is complete.
\end{proof}

\textbf{Important notation.}
Given $u \in \operatorname{LIP}(\operatorname{X},\operatorname{Y})$ and $x \in \operatorname{X}$ with $\overline{\operatorname{e}}_{p}[u](x) > 0$, we let $\delta_{u}(x)$ denote the supremum of $\delta > 0$ such that for all $r \in (0,\delta)$ it holds
\begin{equation}
\label{eqq.delta}
\frac{1}{2}\underline{\operatorname{e}}_{p}[u](x) \le \operatorname{e}_{p,r}[u](x) \le 2\overline{\operatorname{e}}_{p}[u](x), \quad \frac{1}{2}\operatorname{lip}[ u](x) \le \operatorname{var}_{r}[u](x) \le 2\operatorname{Lip}[u](x).
\end{equation}

\begin{Remark}
\label{Rem.6.1'}
It is clear from Theorem \ref{Th.energy_density} that if $r \in (0,\delta_{u}(x))$, then $\overline{\operatorname{e}}_{p}[u](x) \le 2\operatorname{K}_{p}\operatorname{e}_{p,r}[u](x)$.
\end{Remark}

We make the following crucial observation, which gives a first building block for the main construction of the resulting Lipschitz function. We recall notation
\eqref{eqq.image_distance}, \eqref{eqq.image_distance'} and \eqref{eqq.minimal_radius}. Finally, we recall Theorem \ref{Th.energy_density}.
\begin{Prop}
\label{Prop.single_Lipschitz}
Assume that the metric space $(\operatorname{X},\rho)$ is geodesic at the scale $R$ for some $R > 0$. Let $c_{1}:=4\operatorname{K}_{p}$, $c_{2}:=320(\operatorname{K}_{p})^{2}$.
Let $u \in W^{1}_{p}(\operatorname{X},\operatorname{Y})$ and $E \subset \operatorname{X}$  be such that:
\begin{itemize}
\item[\((1)\)] $E \subset \mathcal{D}_{p}[u]$ and $\mu(E) > 0$;

\item[\((2)\)] $u$ has a representative $\underline{u}$ such that $\underline{u}(\operatorname{X})$ is separable and $\underline{u}|_{E} \in \operatorname{LIP}(E,\operatorname{Y})$.
\end{itemize}
Then, there exists a set $\underline{E} \subset E$ with $\mu(E \setminus \underline{E})=0$ such that for each $x \in E$ there exists a number $k(u,x) \in \mathbb{N}$ such that
the inequality
\begin{equation}
\label{eqq.single_Lipschitz}
\overline{\operatorname{e}}_{p}[u](x) \le c_{2}\operatorname{e}_{p,r_{k}[u](x)}[h \circ u](x)
\end{equation}
holds for any $k \geq k(u,x)$, $y \in \operatorname{Y}$ and $h \in \operatorname{LIP}_{1}(\operatorname{Y})$ satisfying the following conditions:

\begin{itemize}
\item[\(($C1$)\)] $h \equiv 2^{-k-4}$ on $B_{2^{-k-4}}(y)$, $h \equiv 0$ on $B_{c_{1}2^{-k}}(y) \setminus B_{2^{-k-3}}(y)$;

\item[\(($C2$)\)] $x \in \operatorname{S}_{k}[u] \cap  u^{-1}(B_{2^{-k-4}}(y))$.
\end{itemize}
\end{Prop}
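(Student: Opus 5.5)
Since $h\circ u$ is controlled from below only where $u$ stays near the ball $B_{2^{-k-4}}(y)$, the plan is to show that at the radius $r=r_k[u](x)$ a definite fraction of the $p$-energy of $u$ around $x$ comes from points $x'$ with $\operatorname{d}(u(x'),u(x))$ comparable to $2^{-k}$, and that such points are mapped by $h$ to $0$ (or to values far from $h(u(x))=2^{-k-4}$).

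\emph{Choice of $\underline{E}$.} Let $\underline{E}$ be the set of points $x\in E\cap\mathcal{E}_p[u]$ that are density points of $E$ and at which the conclusions of Proposition~\ref{Prop.locality} (with a McShane--Whitney Lipschitz extension $\widetilde u$ of $\underline u|_E$ into $l_\infty\supset\operatorname{Y}$ via Lemma~\ref{Lm.Frechet}), Theorem~\ref{Th.energy_density} for $\widetilde u$, Lemma~\ref{Lm.auxiliary_norm} and Proposition~\ref{Prop.minimal_radius_tends_to_zero} all hold. By these results, $\mu(E\setminus\underline E)=0$, and for $x\in\underline E$ we have $0<\overline{\operatorname{e}}_p[u](x)=\overline{\operatorname{e}}_p[\widetilde u](x)\le\operatorname{K}_p\underline{\operatorname{e}}_1[\widetilde u](x)$ and $r_k[u](x)\downarrow 0$. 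Choose $k(u,x)$ so large that $r:=r_k[u](x)<\min\{\delta_{\widetilde u}(x),R\}$ for all $k\ge k(u,x)$, and so large that on $B_r(x)$ the contribution of $B_r(x)\setminus E$ to the integrals defining $\operatorname{e}_{p,r}[u](x)$ and $\operatorname{e}_{1,r}[\widetilde u](x)$ is negligible (density point plus the Hajlasz gradient estimate, exactly as in the proof of Proposition~\ref{Prop.locality}).

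\emph{Key estimate.} By continuity of $r\mapsto r\operatorname{e}_{p,r}[u](x)$ and the definition of $r_k$, we have $r\operatorname{e}_{p,r}[u](x)\approx 2^{-k-1}$, hence $r\,\overline{\operatorname{e}}_p[u](x)\le 2\operatorname{K}_p\,r\operatorname{e}_{p,r}[u](x)\le c_1 2^{-k-1}$ by Remark~\ref{Rem.6.1'}. Because $\widetilde u$ is Lipschitz near $x$, $\operatorname{d}(u(x'),u(x))\le r\operatorname{var}_r\le 2r\operatorname{Lip}[\widetilde u](x)\le 2\operatorname{K}_p\cdot 2r\,\underline{\operatorname e}$, so $u(B_r(x)\cap E)\subset B_{c_12^{-k}}(u(x))$ up to the constant bookkeeping; this is where $c_1=4\operatorname{K}_p$ is used. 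Next apply Lemma~\ref{Lm.average} to $g(x')=\operatorname{d}^p(u(x'),u(x))$ on $B_r(x)$: at least half of $\int_{B_r(x)} g$ comes from points with $g>g_B/2$, i.e.\ from $\operatorname{d}(u(x'),u(x))>2^{-1/p}r\operatorname{e}_{p,r}[u](x)\ge 2^{-k-3}$. Since $u(x)\in B_{2^{-k-4}}(y)$, such points satisfy $u(x')\notin B_{2^{-k-3}}(y)$ when $\operatorname{d}(u(x'),u(x))>2^{-k-3}+2^{-k-4}$. The delicate part is that the lower threshold $2^{-k-3}$ is not quite enough. To handle this, apply Lemma~\ref{Lm.average} twice, or use a Chebyshev splitting at level $2^{-k-2}$ together with the comparability $\overline{\operatorname e}\le c\,\underline{\operatorname e}$ from Theorem~\ref{Th.energy_density} and the upper bound $g\le (c_12^{-k})^p$, to guarantee that a set of measure $\gtrsim\mu(B_r(x))/\operatorname{K}_p^{p}$ has $\operatorname{d}(u(x'),y)\in[2^{-k-3},c_12^{-k})$. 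On this set $h\circ u=0$, while $h(u(x))=2^{-k-4}$.

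\emph{Conclusion.} Then $\operatorname{e}_{p,r}[h\circ u](x)^p\ge r^{-p}2^{-(k+4)p}\mu(\text{good set})/\mu(B_r(x))\gtrsim (2^{-k}/r)^p$. On the other hand, $\overline{\operatorname{e}}_p[u](x)\le 2\operatorname{K}_p\operatorname{e}_{p,r}[u](x)\le 2\operatorname{K}_p2^{-k}/r$, which yields \eqref{eqq.single_Lipschitz} with a constant of order $\operatorname{K}_p^2$; the explicit value $320\operatorname{K}_p^2$ should come out of the constants $2^4$, $2$ and $c_1$. The main obstacle is the middle step: producing a set of \emph{proportional} measure on which $u$ lies in the annulus where $h=0$, which requires combining the pointwise Lip--lip/energy comparability with the level-set lemma.
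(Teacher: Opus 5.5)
Your proposal is correct and follows essentially the paper's route: Lemma~\ref{Lm.average} applied to $\operatorname{d}^{p}(u(\cdot),u(x))$ on $B_{r}(x)$ with $r=r_{k}[u](x)$, the Lip--energy comparison of Theorem~\ref{Th.energy_density} to keep the resulting level set inside $B_{c_{1}2^{-k}}(y)$ where $h=0$, and a density/locality correction for $B_{r}(x)\setminus E$. The only cosmetic difference is that you bound the measure of that level set from below, whereas the paper compares $\operatorname{d}^{p}(\overline{u}(x'),\overline{u}(x))$ with $(128\operatorname{K}_{p}|h(\overline{u}(x'))-h(\overline{u}(x))|)^{p}$ pointwise on it; the ``delicate part'' you flag is also not there, since $r\operatorname{e}_{p,r}[u](x)\ge 2^{-k-1}$ at $r=r_{k}[u](x)$ gives the threshold $2^{-1/p}2^{-k-1}\ge 2^{-k-2}$ (not merely $2^{-k-3}$), and $2^{-k-2}-2^{-k-4}>2^{-k-3}$, so a single application of Lemma~\ref{Lm.average} already places the level set outside $\overline{B}_{2^{-k-3}}(y)$.
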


\begin{proof}
Taking into account Lemma \ref{Lm.Frechet} and replacing $\underline{u}$ with $\iota \circ \underline{u}$, without loss of generality we may assume that $\operatorname{Y}=l_{\infty}$ and $\underline{y}=0$.
Now we split the rest of the proof into several steps.
Let $\overline{u} \in \operatorname{LIP}(\operatorname{X},l_{\infty})$
be the extension of $\underline{u}$ from $E$ to $\operatorname{X}$ given by Lemma \ref{Lm.McShane_Whitney}.

\textit{Step 1.}
By Remark \ref{Rem.differentiation_integarl} and Propositions \ref{Prop.locality}, \ref{Prop.minimal_radius_tends_to_zero}
 there is $\underline{E} \subset E$ such that $\mu(E \setminus \underline{E})=0$ and the following equalities hold for all $x \in \underline{E}$
\begin{equation}
\label{eqq.6.4'}
\lim\limits_{r \to +0}\frac{\mu(B_{r}(x)\setminus E)}{\mu(B_{r}(x))}=0, \quad \overline{\operatorname{e}}_{p}[u](x)=\overline{\operatorname{e}}_{p}[\overline{ u}](x), \quad \lim\limits_{k \to +\infty}r_{k}[u](x) = 0.
\end{equation}

\textit{Step 2.} Given $x \in \underline{E}$, let $k(u,x)$ be the minimal among all $k \in \mathbb{N}$ such that
\begin{equation}
\label{eqq.6.5'}
r=r_{k}[ u](x) < \delta_{\overline{ u}}(x) \quad \hbox{and} \quad 256(\operatorname{K}_{p})^{2}\frac{\mu(B_{r}(x)\setminus E)}{\mu(B_{r}(x))}L_{\overline{u}} < \frac{1}{5}\overline{\operatorname{e}}_{p}[u](x).
\end{equation}

\textit{Step 3.} We fix $k \geq k(u,x)$ and $y \in \operatorname{Y}$ such that
$x \in \operatorname{S}_{k}[u] \cap  u^{-1}(B_{2^{-k-4}}(y)) \cap \underline{E}$. Then we put $r:=r_{k}[ u](x)$, and consider the set
\begin{equation}
\notag
E_{k}(x):=\Bigl\{x' \in B_{r}(x):\operatorname{d}(\overline{u}(x'),\overline{u}(x)) > \Bigl(\frac{1}{2}\Bigr)^{\frac{1}{p}}r\operatorname{e}_{p,r}[\overline{u}](x)\Bigr\}.
\end{equation}
By Lemma \ref{Lm.average} we clearly have
\begin{equation}
\label{eqq.5.5}
\frac{1}{\mu(B_{r}(x))}\int\limits_{E_{k}(x)}\operatorname{d}^{p}(\overline{u}(x'),\overline{u}(x))\,d\mu(x') \geq \frac{1}{2}(r\operatorname{e}_{p,r}[\overline{u}](x))^{p}.
\end{equation}
The key point which follows immediately from \eqref{eqq.image_distance'} and \eqref{eqq.minimal_radius} is that
\begin{equation}
\operatorname{d}(\overline{u}(x'),\overline{u}(x)) > \Bigl(\frac{1}{2}\Bigr)^{\frac{1}{p}}r\operatorname{e}_{p,r}[\overline{u}](x) \geq 2^{-k-2} \quad \hbox{for all $x' \in E_{k}(x)$}
\end{equation}
Consequently, by the triangle inequality
\begin{equation}
\operatorname{d}(\overline{u}(x'),y) \geq \operatorname{d}(\overline{u}(x'),\overline{u}(x)) - \operatorname{d}(y,\overline{u}(x)) > 2^{-k-2} - 2^{-k-4} \geq 2^{-k-3}.
\end{equation}
As a result, $\overline{u}(x') \notin \overline{B}_{2^{-k-3}}(y)$, and we deduce
\begin{equation}
\label{eqq.5.6}
2^{-k-4}=h(\overline{ u}(x))=|h(\overline{u}(x'))-h(\overline{ u}(x))| \geq \frac{r}{32}\operatorname{e}_{p,r}[\overline{u}](x) \quad \hbox{for each $x' \in E_{k}(x)$}.
\end{equation}
On the other hand, by \eqref{eqq.delta} and Theorem \ref{Th.energy_density}
\begin{equation}
\label{eqq.5.7}
\operatorname{d}(\overline{u}(x'),\overline{u}(x)) \le  2r\operatorname{Lip}[{u}](x) \le 4\operatorname{K}_{p}r\operatorname{e}_{p,r}[\overline{u}](x) \quad \hbox{for all $x' \in B_{r}(x)$}.
\end{equation}
As a result, combination \eqref{eqq.5.6} and \eqref{eqq.5.7} gives
\begin{equation}
\label{eqq.6.9''}
\frac{1}{\mu(B_{r}(x))}\int\limits_{E_{k}(x)}\operatorname{d}^{p}(\overline{u}(x'),\overline{u}(x))\,d\mu(x') \le (128\operatorname{K}_{p}r\operatorname{e}_{p,r}[h \circ \overline{u}](x))^{p}.
\end{equation}

\textit{Step 4.}
Since $h \in \operatorname{LIP}_{1}(\operatorname{Y})$, we have (we also take into account \eqref{eqq.6.5'})
\begin{equation}
\label{eqq.6.10''}
\operatorname{e}_{p,r}[h \circ \overline{u}](x) \le L_{\overline{u}}\frac{\mu(B_{r}(x) \setminus E)}{\mu(B_{r}(x))} + \operatorname{e}_{p,r}[h \circ u](x) <
\frac{1}{1280(\operatorname{K}_{p})^{2}}\overline{\operatorname{e}}_{p}[u](x)+ \operatorname{e}_{p,r}[h \circ u](x).
\end{equation}

\textit{Step 5.} Combining \eqref{eqq.6.9''}, \eqref{eqq.6.10''}, and taking into account \eqref{eqq.6.4'}, Remark \ref{Rem.6.1'}, we arrive at
\begin{equation}
\label{eqq.6.11''}
\overline{\operatorname{e}}_{p}[u](x) \le 2\operatorname{K}_{p}\operatorname{e}_{p,r}[\overline{u}](x) \le 256(\operatorname{K}_{p})^{2}\operatorname{e}_{p,r}[h \circ \overline{u}](x) \le \frac{1}{5}\overline{\operatorname{e}}_{p}[u](x)+ 256(\operatorname{K}_{p})^{2}\operatorname{e}_{p,r}[h \circ u](x).
\end{equation}
It remains to take into account that by \eqref{eqq.image_distance} and \eqref{eqq.5.7} we have $\overline{u}(E_{k}) \subset B_{c_{1}2^{-k}}(y)$.

The proof is complete.
\end{proof}

\section{Proof of the first main result}

Now we are ready to prove the first main result of this paper.

\textit{proof of Theorem \ref{Th.main_first}.} Taking into account Proposition \ref{Prop.Poincare_forgotten} and Remark \ref{Rem.Sobolev_forgotten}
without loss of generality we may assume that the metric space $(\operatorname{X},\rho)$ is geodesic at the scale $1$.
To the end of the proof we fix $u \in W_{p}^{1}(\operatorname{X},\operatorname{Y})$, $k \in \mathbb{N}$ and $r \in (0,1)$.
We also fix a Borel representative $\overline{u}$ of $u$ such that $\overline{u}(\operatorname{X})$ is separable.
The rest of the proof will be split into several steps.

\textit{Step 1.} Keeping in mind Proposition \ref{Prop.Lusin-Lipschitz} for each $j \in \mathbb{Z}$ we fix a compact set $E_{j}[u] \subset \mathcal{D}_{p,j}[u]$ such that
$\mu(E_{j}[u]) \geq 3/4 \mu(\mathcal{D}_{p,j}[u])$ and $\overline{u}|_{E_{j}[u]} \in \operatorname{LIP}(E_{j},\operatorname{Y})$.

\textit{Step 2.} For each $j \in \mathbb{Z}$ and $l \in \mathbb{N}$, we put
\begin{equation}
\notag
\underline{E}_{j,l}[u]:=E_{j}[u] \cap \operatorname{S}_{l}[u] \cap \Bigl\{x \in \mathcal{D}_{p}[u]:r_{l}[u](x) < \frac{r}{\lambda}\Bigr\},
\end{equation}
where the parameter $\lambda$ is the same as in \eqref{eqq.Poincare}.
Since $\mu(E_{j}[u]) < +\infty$ for all $j \in \mathbb{N}$, by Lemma \ref{Lm.exhaustive}, Proposition \ref{Prop.minimal_radius_tends_to_zero} and Egorov's theorem we take and fix $\underline{l} \in \mathbb{N}$
so large that
\begin{equation}
\notag
\mu(\underline{E}_{j,\underline{l}}[u]) > \frac{1}{2}\mu(\mathcal{D}_{p,j}[u]) \quad \hbox{for each $j \in \{-k-1,...,k\}$}.
\end{equation}
Finally, we set
\begin{equation}
\underline{E}_{j}[u]:=\underline{E}_{j,\underline{l}}[u], \quad \underline{E}^{k}[u]:=\bigcup\limits_{j=-k-1}^{k}\underline{E}_{j}[u].
\end{equation}

\textit{Step 3.} We put $\operatorname{Z}_{k}:=\overline{u}(\underline{E}^{k}[u])$. Now we apply Proposition \ref{Prop.Reduction_to_Vitali} with $\operatorname{Z}=\operatorname{Z}_{k}$,
$f=(|Du|_{p})^{p}$, and $U_{j}=\underline{E}_{j}[u]$, $j\in \{-k-1,...,k\}$.
Hence, the family of measure
$\{\overline{u}_{\sharp}(|Du|_{p}\chi_{U_{j}}\mu)\}_{j=-k-1}^{k}$ is uniformly weakly equi-asymptotically doubling on $\operatorname{Z}_{k}$. Taking into account
 Remark \ref{Remark.degenerate_slowly} with $\kappa = 180c_{1}$ (where $c_{1}$ is the same as in Proposition \ref{Prop.single_Lipschitz}), we get
existence of a constant $C > 0$ such that,
for each $z \in \operatorname{Z}_{k}$ there exists a sequence of balls $\{B_{s}(z)\}_{s=1}^{\infty}$ in $\operatorname{Y}$ satisfying
\begin{equation}
\label{eqq.7.2''}
\lim\limits_{s \to \infty}r(B_{s}(z))=0 \quad \hbox{and} \quad \sup\limits_{z \in \operatorname{Z}_{k}}\sup\limits_{s \in \mathbb{N}}\max\limits_{-k-1\le j \le k}\frac{\mu(\overline{u}^{-1}(\kappa B_{s}(z) \cap \underline{E}_{j}[u])}{\mu(\overline{u}^{-1}(B_{s}(z)) \cap \underline{E}_{j}[u])} \le C.
\end{equation}
Hence, for each $z \in \operatorname{Z}_{k}$ we can fix a ball $\underline{B}(z)$ such that the following properties hold:
\begin{itemize}
\item[\(($B1$)\)] $r(\underline{B}(z)) = 2^{-k(z)}$ for some $k(z) \in \mathbb{N}$;

\item[\(($B2$)\)] $k(z) \geq \underline{l}$;

\item[\(($B3$)\)] $r(\underline{B}(z)) \in [r(B_{s}(z)),2r(B_{s}(z)))$ for some $s \in \mathbb{N}$.
\end{itemize}
It follows immediately from \eqref{eqq.7.2''} and $(\operatorname{B3})$ that
\begin{equation}
\label{eqq.7.3''}
\sup\limits_{z \in \operatorname{Z}_{k}}\max\limits_{-k-1\le j \le k}\frac{\mu(\overline{u}^{-1}(\frac{\kappa}{2}\underline{B}(z)) \cap \underline{E}_{j}[u])}{\mu(\overline{u}^{-1}(\underline{B}(z)) \cap \underline{E}_{j}[u])} \le C.
\end{equation}

\textit{Step 4.} We put $\mathcal{B}_{k}:=\{\frac{\kappa}{10}\underline{B}(z):z \in \operatorname{Z}_{k}\}$. Applying the $5B$-Vitaly covering lemma to the family $\mathcal{B}_{k}$, we get an at most countable family of balls $\{\underline{B}_{m}\}$ such that the family $\{\frac{\kappa}{10}\underline{B}_{m}\}$ is disjoint and $\bigcup_{m}\frac{\kappa}{2}\underline{B}_{m} \supset \operatorname{Z}_{k}$. Hence, letting $C_{1}:=1/2C$ and
$$
\mathcal{S}_{k,r}:=\overline{u}^{-1}\Bigl(\bigcup\limits_{m}\underline{B}_{m}\Bigr),
$$
we get for each $j \in \{-k-1,...,k\}$ the required estimate
\begin{equation}
\notag
\mu(\mathcal{S}_{k,r} \cap \mathcal{D}_{p,j}[u]) \geq C_{1}\mu(\mathcal{D}_{p,j}[u]).
\end{equation}

\textit{Step 5.}
 Now we construct our special test function $\underline{h}_{k}$ as follows.
Firstly, we set
\begin{equation}
\widetilde{\underline{h}}_{k,m}(y):=
\begin{cases}
r(\underline{B}_{m}), \quad \hbox{if $y \in \underline{B}_{m}$};\\
0, \quad \hbox{if $y \notin \bigcup\limits_{m}2\underline{B}_{m}$}.
\end{cases}
\end{equation}
Then we put
\begin{equation}
\notag
\widetilde{\underline{h}}_{k}(y):=\sum\limits_{m}\widetilde{\underline{h}}_{k,m}(y), \quad y \in \operatorname{Y} \setminus \bigcup\limits_{m}(2\underline{B}_{m}\setminus \underline{B}_{m}).
\end{equation}
By the construction
$$
\operatorname{dist}(\underline{B}_{m},\underline{B}_{m'}) > \max\{r(\underline{B}_{m}),r(\underline{B}_{m'})\}.
$$
This clearly gives
\begin{equation}
\notag
\widetilde{\underline{h}}_{k,m} \in \operatorname{LIP}_{1}\Bigl(\operatorname{Y} \setminus \bigcup\limits_{m}(2\underline{B}_{m}\setminus \underline{B}_{m})\Bigr).
\end{equation}
Using Lemma \ref{Lm.McShane_Whitney}
we get the extension $\underline{h}_{k} \in \operatorname{LIP}_{1}(\operatorname{Y})$ of $\widetilde{\underline{h}}_{k}$ to the whole space $\operatorname{Y}$.

\textit{Step 6.} By the construction, $r(B_{m})=2^{-k_{m}}$ for some $k_{m} \in \mathbb{N}$ with $k_{m} \geq \underline{l}$.
We put $r(x,m):=r_{k_{m}}[u](x)$ for each $x \in \mathcal{S}_{k,r}$ and note that $\lambda r(x,m) < r$.
Let $q \in (1,p)$ be the same as in Proposition \ref{Prop.Sobolev_imb}.
Take into account that, given $m$, we have
$\underline{h}_{k} \equiv 1$ on $\underline{B}_{m}$ and $\underline{h}_{k} \equiv 0$ on $16c_{1}\underline{B}_{m} \setminus 2\underline{B}_{m}$. As a result, applying
Proposition \ref{Prop.single_Lipschitz} for each ball $\underline{B}_{m}$, $m \in \mathbb{N}$ with $h$ replaced by $\underline{h}_{k}$ and $B_{2^{-k-4}}(y)$ replaced by $\underline{B}_{m}$, we derive the required estimate
\begin{equation}
\begin{split}
&\overline{\operatorname{e}}_{p}[u](x) \le C\operatorname{e}_{p,r(x,m)}[\underline{h}_{k} \circ u](x)\\
&\le C\operatorname{M}^{\le \lambda r(x,m)}_{q}[|D(\underline{h}_{k}\circ u)|_{p}](x) \le C \operatorname{M}^{\le r}_{q}[|D(\underline{h}_{k}\circ u)|_{p}](x) \quad \hbox{for $\mu$-a.e. $x \in \mathcal{S}_{k,r}$}.
\end{split}
\end{equation}
The proof is complete.
\hfill$\Box$

\section{New reduction to maps with finite-dimensional images}

The main goal of this section is to show that any map $u \in L_{p}(\operatorname{X},\operatorname{Y}) \setminus W_{p}^{1}(\operatorname{X},\operatorname{Y})$
can be represented as an $L_{p}$-limit of some sequence $\{u_{n}\} \subset W_{p}^{1}(\operatorname{X},\operatorname{Y})$ with growing $L_{p}$-norms
of the corresponding minimal $p$-weak upper gradients $|Du_{n}|_{p}$, $n \in \mathbb{N}$, in such a way that $u_{n}:=F_{n} \circ u$, $n \in \mathbb{N}$ for some $\{F_{n}\} \subset \operatorname{LIP}_{2}(\operatorname{Y},\operatorname{Y})$.
As far as we know, this idea is new.
At least, it has never been applied as a technical tool for solving Problem \ref{Problem.Oleinik}. Finally, this method is fundamentally different from finite dimensional
reductions used in \cite{Pav,Oleinik2}.

Throughout the section the following data will be fixed:
\begin{itemize}
\item[\(($D.7.1$)\)] a parameter $p \in (1,\infty)$;

\item[\(($D.7.2$)\)] a metric measure space $\operatorname{X}=(\operatorname{X},\operatorname{d},\mu) \in \mathcal{A}_{p}$;

\item[\(($D.7.3$)\)] a pointed complete metric space $\operatorname{Y}=(\operatorname{Y},\operatorname{d},\underline{y})$.
\end{itemize}

\subsection{Modified Fr\'echet--Kuratowski-type imbedding}
Throughout this subsection we fix an arbitrary \textit{compact} metric space $\operatorname{Z}=(\operatorname{Z},\operatorname{d}_{\operatorname{Z}})$.
We say that $\mathcal{N} \subset E$ is an \textit{$\epsilon$-separated subset} of $\operatorname{Z}$ if $\operatorname{d}(z,z') \geq \epsilon$ for all $z,z' \in \mathcal{N}$ with $z \neq z'$.
An $\epsilon$-separated subset $\mathcal{N}$ of $\operatorname{Z}$ is said to be \textit{maximal} if
\begin{equation}
\label{eqq.epsilon_separated}
\operatorname{dist}(z,\mathcal{N}):=\inf\{\operatorname{d}(z,z'):z' \in \mathcal{N}\} < \epsilon \quad \hbox{for every $z \in \operatorname{Z}$.}
\end{equation}

\textbf{Important notation.} We put
$
\underline{k}:=\min\{k \in \mathbb{Z}:2^{-k} \le \operatorname{diam}\operatorname{Z}\}.
$
Given $k \geq \underline{k}$, we fix a maximal $2^{-k}$-separated subset $\{z_{k,m}\}_{m=1}^{N_{k}}$, $N_{k} \in \mathbb{N}$ of $\operatorname{Z}$.
Now we put
\begin{equation}
\label{eqq.resol_unity_2}
\psi_{k,m}(\cdot):=2^{-k} \max\Bigl\{0,2^{-k}-\operatorname{dist}(\cdot,\overline{B}_{2^{-k}}(z_{k,m}))\Bigr\}, \quad m \in \{1,...,N_{k}\}.
\end{equation}
We define the map $\mathfrak{I}_{k}[\operatorname{Z}]:\operatorname{Z} \to \mathbb{R}^{N_{k}}_{\infty}$ by the equality
\begin{equation}
\label{eqq.imbed_1}
\mathfrak{I}_{k}[\operatorname{Z}](z):=(\psi_{k,1}(z),...,\psi_{k,N_{1}}(z)), \quad z \in \operatorname{Z}.
\end{equation}

Now we introduce the crucial concept. Ideologically it is close to the famous Fr\'echet imbedding (compare with Lemma \ref{Lm.Frechet}). On the other hand, it has nice
local behaviour.
\begin{Def}
\label{Def.smooth_Kuratowski}
By a smooth localized Fr\'echet--Kuratowski-type imbedding we mean a map $\mathfrak{I}[\operatorname{Z}]:\operatorname{Z} \to l_{\infty}$ defined by the equality
\begin{equation}
\label{eqq.imbed_2}
\mathfrak{I}[\operatorname{Z}](z):=(\mathfrak{I}_{1}[\operatorname{Z}](z),....,\mathfrak{I}_{l}[\operatorname{Z}](z),...), \quad z \in \operatorname{Z}.
\end{equation}
Given $n \in \mathbb{N}$, we define the $n$-th offcut of $\mathfrak{I}[\operatorname{Z}]$ by letting
\begin{equation}
\label{eqq.imbed_3}
\mathfrak{I}^{n}[\operatorname{Z}](z):=(\mathfrak{I}_{1}[\operatorname{Z}](z),...,\mathfrak{I}_{n}[\operatorname{Z}](z),0,...,0,...), \quad z \in \operatorname{Z}.
\end{equation}
\end{Def}

\begin{Remark}
\label{Rem.Z_dependence}
In fact the map $\mathfrak{I}[\operatorname{Z}]$ depends both on $\operatorname{Z}$ and the choice of family $\{\psi_{k,m}\}$.
We use the heavy notation and retain the symbol $\operatorname{Z}$ since it will be convenient for the next subsection, where
we will work with not a single compact metric space $\operatorname{Z}$ but rather with the family of such spaces.
The possible dependence on the choice of $\{\psi_{k,m}\}$ will not play any essential role for the subsequent results.
\end{Remark}

The following assertion justifies the introducing of $\mathfrak{I}$ and shows that it is almost an isometry.
\begin{Prop}
\label{Prop.almost_isometry}
For each pair $(z_{1},z_{2}) \in \operatorname{Z} \times \operatorname{Z}$ it holds
\begin{equation}
\label{eqq.almost_isometry}
\frac{1}{4}\operatorname{d}(z_{1},z_{2}) \le \|\mathfrak{I}[\operatorname{Z}](z_{1})-\mathfrak{I}[\operatorname{Z}](z_{2})\| \le \operatorname{d}(z_{1},z_{2}).
\end{equation}
\end{Prop}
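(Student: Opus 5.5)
The plan is to check the two inequalities in \eqref{eqq.almost_isometry} coordinatewise, using that $\|\cdot\|$ is the $\sup$-norm of $l_{\infty}$. The upper bound should follow from a Lipschitz estimate on each coordinate function. The lower bound should follow by choosing one scale $k$ adapted to $\operatorname{d}(z_{1},z_{2})$ and one net point $z_{k,m}$ near $z_{1}$, so that the single coordinate $\psi_{k,m}$ already separates $z_{1}$ and $z_{2}$.

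\textbf{Normalization of $\psi_{k,m}$.} Before starting, I will have to fix the normalization in \eqref{eqq.resol_unity_2}, and I regard this as the main obstacle. Read literally with the prefactor $2^{-k}$, each $\psi_{k,m}$ is bounded by $2^{-2k}$. A coordinate difference at the scale $2^{-k}\sim \operatorname{d}(z_{1},z_{2})$ would then be of order $\operatorname{d}(z_{1},z_{2})^{2}$, which cannot give a linear lower bound. So I will work with
\begin{equation}
\notag
\psi_{k,m}(z)=\max\bigl\{0,\,2^{-k}-\operatorname{dist}(z,\overline{B}_{2^{-k}}(z_{k,m}))\bigr\}.
\end{equation}
I will also let the scales $k$ run over all $k\geq\underline{k}$, so that scales up to $\operatorname{diam}\operatorname{Z}$ are available. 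If instead the indexing starts at $k=1$, then one has to assume, say, $\operatorname{diam}\operatorname{Z}\le 1$, or equivalently rescale.

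\textbf{Upper bound.} The function $z\mapsto \operatorname{dist}(z,A)$ is $1$-Lipschitz for any set $A$, and truncation by $\max\{0,\cdot\}$ preserves the Lipschitz constant. Hence every $\psi_{k,m}$ is $1$-Lipschitz, so $|\psi_{k,m}(z_{1})-\psi_{k,m}(z_{2})|\le \operatorname{d}(z_{1},z_{2})$ for all $k,m$. Taking the supremum over all coordinates gives $\|\mathfrak{I}[\operatorname{Z}](z_{1})-\mathfrak{I}[\operatorname{Z}](z_{2})\|\le \operatorname{d}(z_{1},z_{2})$. In particular the map takes values in $l_{\infty}$, since each coordinate is bounded by $2^{-k}\le 2^{-\underline{k}}$.

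\textbf{Lower bound.} Let $d:=\operatorname{d}(z_{1},z_{2})>0$, and choose $k$ with $2^{-k}\in(d/4,d/2]$. Because $d\le\operatorname{diam}\operatorname{Z}$, this $k$ is admissible; this is exactly where the range of scales from the normalization step is needed. By maximality \eqref{eqq.epsilon_separated} of the net $\{z_{k,m}\}_{m=1}^{N_{k}}$, there is $m$ with $\operatorname{d}(z_{1},z_{k,m})<2^{-k}$. Then $z_{1}\in\overline{B}_{2^{-k}}(z_{k,m})$, and therefore $\psi_{k,m}(z_{1})=2^{-k}$. For $z_{2}$, the triangle inequality gives
\begin{equation}
\notag
\operatorname{dist}(z_{2},\overline{B}_{2^{-k}}(z_{k,m}))\geq \operatorname{d}(z_{2},z_{k,m})-2^{-k}\geq d-2\cdot 2^{-k}\geq 0 .
\end{equation}
Consequently $\psi_{k,m}(z_{2})\le \max\{0,\,3\cdot 2^{-k}-d\}$. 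This yields
\begin{equation}
\notag
\psi_{k,m}(z_{1})-\psi_{k,m}(z_{2})\geq \min\{2^{-k},\,d-2^{-k}\}\geq \min\{d/4,\,d/2\}=d/4 .
\end{equation}
Since the sup-norm dominates any single coordinate, the left inequality in \eqref{eqq.almost_isometry} follows.
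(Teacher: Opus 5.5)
Your normalization (no prefactor $2^{-k}$, all levels $k\ge\underline{k}$) is the one the paper's proof tacitly uses, and your upper bound is the same as the paper's. The gap is in the last line of the lower bound. From $\psi_{k,m}(z_{2})\le\max\{0,3\cdot 2^{-k}-d\}$ you only get
\[
\psi_{k,m}(z_{1})-\psi_{k,m}(z_{2})\ge\min\{2^{-k},\,d-2\cdot 2^{-k}\},
\]
not $\min\{2^{-k},d-2^{-k}\}$. For $2^{-k}\in(d/4,d/2]$ the right-hand side can be $0$. This is not just a loose estimate; the coordinate you pick can genuinely fail. In $\operatorname{Z}=[0,1]$ let $z_{1}=0$ and $z_{2}=\tfrac12$, so your scale is $2^{-k}=\tfrac14$. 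Take the maximal $\tfrac14$-separated net $\{\tfrac14-\epsilon,\tfrac12,\tfrac34,1\}$. The only net point within $\tfrac14$ of $z_{1}$ is $\tfrac14-\epsilon$, and for it $\psi_{k,m}(z_{1})=\tfrac14$ while $\psi_{k,m}(z_{2})=\tfrac14-\epsilon$. The reason is that $\psi_{k,m}$ vanishes outside $B_{2^{1-k}}(z_{k,m})\subset B_{3\cdot 2^{-k}}(z_{1})$. So $\psi_{k,m}(z_{2})=0$ is guaranteed only when $3\cdot 2^{-k}\le d$, and your scale is one dyadic step too coarse.

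The paper works at such a finer scale. It takes $k$ minimal with $2^{-k}\le d$ and uses the level-$(k+2)$ net, where $\psi(z_{1})=2^{-k-2}$ and $\psi(z_{2})=0$. But $2^{-k-2}\in(d/8,d/4]$, so that argument gives the constant $\tfrac18$, not $\tfrac14$. Choosing $2^{-k}\in(d/6,d/3]$ gives $\tfrac16$. Your corrected two-case bound, applied at the dyadic scale $2^{-k}\in(d/5,2d/5]$, gives $\tfrac15$.

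Do not try to recover $\tfrac14$: for arbitrary maximal nets it is false. Let $\operatorname{Z}$ be the closed Euclidean parallelogram with vertices $z_{1}=(0,0)$, $w_{1}=(\tfrac{7}{18},\tfrac{5}{18})$, $z_{2}=(\tfrac{10}{9},0)$, $w_{2}=(\tfrac{13}{18},-\tfrac{5}{18})$. Then $d=\operatorname{diam}\operatorname{Z}=\tfrac{10}{9}$ and $\underline{k}=0$. The set $\{(\tfrac59,0)\}$ is a maximal $1$-separated set, and $\{w_{1},w_{2}\}$ is a maximal $\tfrac12$-separated set. Level $0$ contributes $0$, and levels $k\ge 2$ contribute at most $\tfrac14$. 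By convexity $\psi_{1,w_{1}}(z_{2})=1-|z_{2}-w_{1}|$ while $\psi_{1,w_{1}}(z_{1})=\tfrac12$, so level $1$ contributes $\tfrac{\sqrt{194}}{18}-\tfrac12$, and symmetrically for $w_{2}$. Since $\sqrt{194}<14$, all contributions are $<\tfrac{5}{18}=\tfrac d4$. So the left inequality holds only with a smaller constant, and your argument proves it, with $\tfrac15$, once the scale is chosen as above.
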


\begin{proof}
Since $\{\psi_{k,m}\} \subset \operatorname{LIP}_{1}(\operatorname{Z})$ for each $k \in \mathbb{Z}$, the right inequality in \eqref{eqq.almost_isometry} follows easily.
To prove the left inequality, given $z_{1},z_{2}$ we take $k \in \mathbb{Z}$ to be the minimal integer for which $2^{-k} \le \operatorname{d}_{\operatorname{Z}}(z_{1},z_{2})$.
By the maximality of the net $\{z_{k,m}\}$ there is $m \in \{1,...,N_{k+2}\}$ such that  $\psi_{k,m}(z_{1})=2^{-k-2}$ and $\psi_{k,m}(z_{2})=0$.
\end{proof}

The following assertion is an immediate consequence of \eqref{eqq.resol_unity_2} -- \eqref{eqq.imbed_3}.
\begin{Lm}
\label{Lm.finite_dimensional}
For each $n \in \mathbb{N}$, it holds
\begin{equation}
\label{eqq.finite_dimensional}
\sup\limits_{z \in \operatorname{Z}}\|\mathfrak{I}^{n}[\operatorname{Z}](z)-\mathfrak{I}[\operatorname{Z}](z)\| < \frac{1}{2^{n}}.
\end{equation}
\end{Lm}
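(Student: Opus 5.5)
The plan is to estimate the tail of the sequence coordinatewise. By definition \eqref{eqq.imbed_2}--\eqref{eqq.imbed_3}, the vector $\mathfrak{I}^{n}[\operatorname{Z}](z)-\mathfrak{I}[\operatorname{Z}](z)$ has zero entries in the first $n$ blocks $\mathfrak{I}_{1}[\operatorname{Z}](z),\dots,\mathfrak{I}_{n}[\operatorname{Z}](z)$. Its remaining entries are exactly $-\psi_{k,m}(z)$ with $k\ge n+1$ and $1\le m\le N_{k}$. Since we work in $l_{\infty}$, it follows that
\begin{equation}
\notag
\|\mathfrak{I}^{n}[\operatorname{Z}](z)-\mathfrak{I}[\operatorname{Z}](z)\| = \sup\limits_{k \ge n+1}\,\max\limits_{1\le m\le N_{k}}|\psi_{k,m}(z)|.
\end{equation}
It therefore suffices to bound each $\psi_{k,m}$ uniformly on $\operatorname{Z}$ by a quantity that decays geometrically in $k$.

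From \eqref{eqq.resol_unity_2}, the inner factor satisfies $0\le \max\{0,2^{-k}-\operatorname{dist}(\cdot,\overline{B}_{2^{-k}}(z_{k,m}))\}\le 2^{-k}$. Hence $0\le\psi_{k,m}\le 2^{-2k}$ on $\operatorname{Z}$. Taking the supremum over $k\ge n+1$ gives the bound $2^{-2(n+1)}$, and this is $<2^{-n}$ for every $n\in\mathbb{N}$. The bound does not depend on $z$, so taking $\sup_{z\in\operatorname{Z}}$ preserves the strict inequality \eqref{eqq.finite_dimensional}.

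The only real obstacle is bookkeeping about indexing. The blocks in \eqref{eqq.imbed_2} start at $k=1$, while the nets are defined for $k\ge\underline{k}$, and $\underline{k}$ might be $\le 0$. I would read the definition as listing only the blocks with $k\ge 1$, or equivalently say that the blocks with $k<1$ are among the first $n$ retained ones. In either reading the tail consists only of indices $k\ge n+1$, so the estimate is unaffected. If one prefers the normalization in which $\psi_{k,m}$ is only $1$-Lipschitz with height of order $2^{-k}$ (as used in Proposition \ref{Prop.almost_isometry}), the same argument still gives the bound $2^{-(n+1)}<2^{-n}$.
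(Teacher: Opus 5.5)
Your proposal is correct and is essentially the argument the paper has in mind; the paper gives no proof, only calling the lemma an immediate consequence of \eqref{eqq.resol_unity_2}--\eqref{eqq.imbed_3}. You spell it out: the difference involves only the tail blocks $k\ge n+1$, and there the bound $0\le\psi_{k,m}\le 2^{-2k}$ (or $2^{-k}$ under the normalization suggested by Proposition \ref{Prop.almost_isometry}) gives a uniform estimate strictly below $2^{-n}$.
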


\subsection{Non-Sobolev maps are approximated by nice Sobolev maps} Informally speaking, in this subsection we use tools from the previous one to
build $L_{p}$-approximations of non-Sobolev maps by Sobolev maps with large norms and finite-dimensional images.

\begin{Th}
\label{Th.finite_dimensional_approximation}
Let a map $u \in L_{p}(\operatorname{X},\operatorname{Y}) \setminus W_{p}^{1}(\operatorname{X},\operatorname{Y})$ be such that $h \circ u \in W_{p}^{1}(\operatorname{X})$
for every $h \in \operatorname{LIP}(\operatorname{Y})$.
Then, for each $n \in \mathbb{N}$, there exist $N_{n} \in \mathbb{N}$, a compact set $K_{n} \subset \mathbb{R}^{N_{n}}_{\infty}$,
and a map $F_{n} \in \operatorname{LIP}_{2}(\operatorname{Y},K_{n})$,
such that $F_{n} \circ u \in W_{p}^{1}(\operatorname{X},\mathbb{R}^{N_{n}}_{\infty})$ and
\begin{equation}
\label{eqq.big_Sobolev_norm}
\|F_{n} \circ u\|_{W_{p}^{1}(\operatorname{X},\mathbb{R}^{N_{n}}_{\infty})} > n.
\end{equation}
\end{Th}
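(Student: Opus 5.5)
The plan is to argue by contradiction through the Korevaar--Sch\"oen energy, using Proposition \ref{Prop.different_approaches}. I would not try to make the approximants $F_{n}\circ u$ converge to $u$ in a common target space, since the energy densities only see distances. Fix a Borel representative of $u$ with separable range and a countable set $\{y_{i}\}_{i=1}^{\infty}$ dense in $\overline{u(\operatorname{X})}$. For $n \in \mathbb{N}$ put $N_{n}:=n$, let $T_{n}(t):=\max\{-n,\min\{n,t\}\}$, and define
\begin{equation}
\notag
F_{n}(y):=\bigl(T_{n}(\operatorname{d}(y,y_{1})-\operatorname{d}(y_{1},\underline{y})),\dots,T_{n}(\operatorname{d}(y,y_{n})-\operatorname{d}(y_{n},\underline{y}))\bigr), \quad y \in \operatorname{Y}.
\end{equation}
Each coordinate is $1$-Lipschitz, so $F_{n}\in\operatorname{LIP}_{1}(\operatorname{Y},K_{n})\subset\operatorname{LIP}_{2}(\operatorname{Y},K_{n})$ with $K_{n}:=[-n,n]^{n}$ compact in $\mathbb{R}^{n}_{\infty}$. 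Moreover $F_{n}(\underline{y})=0$ and $\|F_{n}(y)\|\le\operatorname{d}(y,\underline{y})$, so $F_{n}\circ u\in L_{p}$. As in Lemma \ref{Lm.Frechet}, on $\overline{u(\operatorname{X})}$ we have $\|F_{n}(y)-F_{n}(y')\|_{\infty}\uparrow\operatorname{d}(y,y')$ as $n\to\infty$: the sequence is monotone because adding coordinates and enlarging the truncation window can only increase the sup, and the limit is $\operatorname{d}(y,y')$ by density of $\{y_i\}$.

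\textbf{Step 1: $F_{n}\circ u$ is Sobolev.} Each coordinate $v_{i}:=(F_{n})_{i}\circ u$ has the form $h\circ u$ with $h\in\operatorname{LIP}_{1}(\operatorname{Y})$, so $v_{i}\in W^{1}_{p}(\operatorname{X})$ by hypothesis. Set $G_{n}:=\sum_{i\le n}|Dv_{i}|_{p}\in L_{p}$. For any $g\in\operatorname{LIP}_{1}(\mathbb{R}^{n}_{\infty})$ I will check $|D(g\circ v)|_{p}\le G_{n}$ directly from Definition \ref{Def.Sobolev_function}. Along any test plan, $|g(v(\gamma_{1}))-g(v(\gamma_{0}))|\le\sum_{i}|v_{i}(\gamma_{1})-v_{i}(\gamma_{0})|$, and each summand is controlled by $\int|Dv_{i}|_{p}(\gamma_{t})|\dot\gamma_{t}|\,dt$ after integrating against $\pi$. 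Hence $F_{n}\circ u\in W^{1}_{p}(\operatorname{X},\mathbb{R}^{n}_{\infty})$.

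\textbf{Step 2: the norms blow up.} Suppose \eqref{eqq.big_Sobolev_norm} fails for infinitely many $n$. After passing to a subsequence this gives $\sup_{n}\||D(F_{n}\circ u)|_{p}\|_{L_{p}}\le M$. I would invoke the quantitative form of Proposition \ref{Prop.different_approaches}, namely the Hajlasz bound behind the first inequality of Lemma \ref{Lm.auxiliary_norm}. It yields a constant $C$ independent of the target and of $n$ with
\begin{equation}
\notag
\int_{\operatorname{X}}\bigl(\operatorname{e}_{p,r}[F_{n}\circ u](x)\bigr)^{p}\,d\mu(x)\le C\int_{\operatorname{X}}|D(F_{n}\circ u)|_{p}^{p}\,d\mu\le CM^{p}\quad\hbox{for all } r\in(0,1).
\end{equation}
Since $\operatorname{d}(u(x'),u(x))=\lim_{n}\uparrow\|F_{n}(u(x'))-F_{n}(u(x))\|$, monotone convergence gives $\operatorname{e}_{p,r}[F_{n}\circ u]\uparrow\operatorname{e}_{p,r}[u]$ pointwise. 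A second application of monotone convergence then gives $\int(\operatorname{e}_{p,r}[u])^{p}\,d\mu\le CM^{p}$ for every $r\in(0,1)$. Hence $\operatorname{E}_{p}[u]<\infty$, so $u\in\operatorname{KS}^{1}_{p}(\operatorname{X},\operatorname{Y})=W^{1}_{p}(\operatorname{X},\operatorname{Y})$, a contradiction. So the norms exceed $n$ for all large indices, and a relabelling gives the theorem.

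\textbf{Main obstacle.} The delicate point is the uniformity of $C$ in the displayed estimate. I need $\int(\operatorname{e}_{p,r}[v])^{p}\le C\||Dv|_{p}\|_{p}^{p}$ with $C$ depending only on $\operatorname{X}$, not on the target or on $n$. I would obtain it from the Lusin--Lipschitz and Hajlasz machinery, which bounds $\operatorname{e}_{p,r}[v]$ by a maximal function of $|Dv|_{p}$ for metric targets via Kuratowski embedding, together with Proposition \ref{Prop.maximal_function}. If that constant proved hard to extract, the fallback is Proposition \ref{Prop.weak_convergence} applied to maps composed into $l_{\infty}$ along a tight exhausting family of compacts. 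That route is clumsier, because the images of $F_{n}\circ u$ must then be shown to converge in a single space.
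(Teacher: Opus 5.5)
Your argument is correct, and it takes a genuinely different route from the paper.

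\textbf{The paper's route.} The paper works in one common target. It embeds $\operatorname{Y}$ into $l_{\infty}$, truncates in norm ($\Phi_{j}$), and restricts to compact pieces $u(U_{j})$ via metric projections. It then composes with finite offcuts $\mathfrak{I}^{m}$ of the localized Fr\'echet--Kuratowski embedding, whose coordinates shrink with the scale so that offcuts converge uniformly (Lemma \ref{Lm.finite_dimensional}). The point is to get approximants converging to $u$ in $L_{p}$. The purely qualitative closure result, Proposition \ref{Prop.weak_convergence}, then forces their energies to blow up.

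\textbf{Your route.} You use truncated ordinary Fr\'echet coordinates, which converge in no common space. You exploit that the Korevaar--Sch\"oen energy only sees distances: $\|F_{n}(y)-F_{n}(y')\|_{\infty}\uparrow\operatorname{d}(y,y')$, with monotonicity because $T_{n}=T_{n}\circ T_{n+1}$ and $T_{n}$ is $1$-Lipschitz. Two monotone convergences and $\operatorname{KS}^{1}_{p}=W^{1}_{p}$ then finish the proof. Your ``fallback'' is essentially the paper's route.

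\textbf{The price.} You need a target-independent bound $\sup_{r<1}\int(\operatorname{e}_{p,r}[v])^{p}\,d\mu\le C\||Dv|_{p}\|^{p}_{L_{p}}$. This is standard, and for $\mathbb{R}^{n}_{\infty}$-valued $v$ it is easier than your sketch suggests.
\begin{itemize}
\item[\((a)\)] Since $|Dv_{i}|_{p}\le|Dv|_{p}$, each coordinate satisfies, off a null set, $|v_{i}(x)-v_{i}(x')|\le C\rho(x,x')(g(x)+g(x'))$ with $g=\operatorname{M}^{\le R}_{q}[|Dv|_{p}]$. Here $C$ depends only on the doubling and Poincar\'e data, via Keith--Zhong and telescoping.
\item[\((b)\)] Take the maximum over the finitely many $i$; no density or Kuratowski argument is needed.
\item[\((c)\)] Integrate in $x$ using doubling, then apply Proposition \ref{Prop.maximal_function}.
\end{itemize}
You cannot simply apply Proposition \ref{Prop.Sobolev_imb} coordinatewise. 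The maximum over $i$ sits inside the average defining $\operatorname{e}_{p,r}[v]$, so the two-point bound is what you need.

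\textbf{What you gain.}
\begin{itemize}
\item[\((a)\)] A much lighter construction: $1$-Lipschitz maps into cubes, with no Ulam-type compact exhaustion.
\item[\((b)\)] No metric projections. The paper asserts these are $1$-Lipschitz, which fails in general; the projection of $\mathbb{R}$ onto $\{-1,1\}$ is not even continuous.
\item[\((c)\)] No need to compare approximants with $u$ in a single space.
\item[\((d)\)] An explicit check (your Step 1) that $F_{n}\circ u\in W^{1}_{p}$, which the paper leaves implicit.
\end{itemize}

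\textbf{One wording fix.} The negation you need is not ``\eqref{eqq.big_Sobolev_norm} fails for infinitely many $n$''. Norms growing like $\sqrt{n}$ fail it for every $n$ yet are bounded along no subsequence. What you need is $\liminf_{n}\|F_{n}\circ u\|_{W^{1}_{p}}<\infty$. Your argument then shows $\|F_{n}\circ u\|_{W^{1}_{p}}\to\infty$, and picking for each $n$ an index with norm exceeding $n$ gives the statement.

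In fact $F_{n}=P_{n}\circ F_{n+1}$, where $P_{n}(a_{1},\dots,a_{n+1}):=(T_{n}(a_{1}),\dots,T_{n}(a_{n}))$ is $1$-Lipschitz with $P_{n}(0)=0$. Hence both $|D(F_{n}\circ u)|_{p}$ and $\|F_{n}\circ u\|_{L_{p}}$ are nondecreasing in $n$, and the negation is simply boundedness.
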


\begin{proof}
Since $u \in L_{p}(\operatorname{X},\operatorname{Y})$, there is $\underline{\operatorname{X}} \subset \operatorname{X}$ and a representative $\underline{u}$ of $u$
such that $\mu(\operatorname{X} \setminus \underline{\operatorname{X}}) = 0$
and $\underline{u}(\underline{\operatorname{X}})$ equipped with the restriction of the ambient metric $\operatorname{d}$ gives rise to a separable metric subspace of $\operatorname{Y}$.
If $\pi:\operatorname{Y} \to \underline{u}(\underline{\operatorname{X}})$ is a metric projection map, then, replacing $u$ by $\pi \circ u$, we may assume that $\operatorname{Y}$ is a complete separable pointed
metric space. By Lemma \ref{Lm.Frechet} replacing $ u$ by $\iota \circ u$
without loss of generality we may assume that $\operatorname{Y}$ is a complete separable subspace of $l_{\infty}$, $\underline{y} = 0$, and $\|u\|_{W_{p}^{1}(\operatorname{X},l_{\infty})} = +\infty$.
We split the rest of the proof into several steps.

\textit{Step 1.} Given $j \in \mathbb{N}$, we set
\begin{equation}
\begin{split}
\notag
&\Sigma^{+}_{j}:=\{y \in \operatorname{Y}: 2^{j} \le \|y\|_{\infty} \le 2^{j+1}\}, \quad \Sigma^{-}_{j}:=\{y \in \operatorname{Y}: 2^{-j-1} \le \|y\|_{\infty} \le 2^{-j}\},\\
&S_{j}:=\overline{B}_{2^{j}}(0) \setminus B_{2^{-j}}(0).
\end{split}
\end{equation}
Furthermore, we put $\Omega^{\pm}_{j}:=u^{-1}(\Sigma^{\pm}_{j})$ and $E_{j}:=u^{-1}(S_{j+1})$.
Since $u \in L_{p}(\operatorname{X},\operatorname{Y})$, we have $\mu(E_{j}) < +\infty$ for all $j \in \mathbb{N}$, and,
furthermore,
\begin{equation}
\label{eqq.7.forgotten}
\lim\limits_{j \to \infty}2^{j}\mu(\Omega^{+}_{j}) = \lim\limits_{j \to \infty}2^{-j}\mu(\Omega^{-}_{j}) = \lim\limits_{j \to \infty}\|u\|_{L_{p}(\operatorname{X} \setminus E_{j})} = 0.
\end{equation}

\textit{Step 2.} For each $j \in \mathbb{N}$ the measure $ u_{\sharp}(\| u(\cdot)\|_{\infty}^{p}\mu\lfloor_{E_{j}})$ is finite. By Ulam's theorem
it is concentrated on a $\sigma$-compact set. Hence, for each $j \in \mathbb{N}$, there is a family of sets $\{E_{j,l}\}$ such that: $E_{j,l} \subset E_{j}$ for all $l \in \mathbb{N}$,
for each $l \in \mathbb{N}$ the set $ u(E_{j,l})$ is compact, and $\| u\|_{L_{p}(E_{j} \setminus E_{j,l})} \to 0$, $l \to \infty$.
As a result, we easily obtain a sequence of sets $\{U_{j}\}_{j=1}^{\infty}$ such that $U_{j} \subset E_{j}$ for each $j \in \mathbb{N}$, the sets $ u(U_{j}) \subset S_{j+1}$ are compact for all $j \in \mathbb{N}$, and $2^{j}\mu(E_{j}\setminus U_{j}) \to 0$, $j \to \infty$.

\textit{Step 3.} Given $j \in \mathbb{N}$, consider the map
$\pi_{j}:\operatorname{Y} \to  u(U_{j})$ which to each $y \in \operatorname{Y}$ assigns its arbitrary chosen metric projection to $u(U_{j})$.
It is clear that $\pi_{j} \in \operatorname{LIP}_{1}(l_{\infty}, u(U_{j}))$.

\textit{Step 4.}
Given $j \in \mathbb{N}$, we define $\phi_{j} \in \operatorname{LIP}_{2}(\mathbb{R},\mathbb{R})$, by letting $\phi_{j}(x)=x$, if $|x| \in [2^{-j},2^{j}]$, $\phi_{j}(x):=0$ if $|x| \notin [2^{-j-1},2^{j+1}]$, and
$\phi_{j}$ is affine on the remained segments. Now we put
\begin{equation}
\notag
\Phi_{j}(x):=(\phi_{j}(x_{1}),...,\phi_{j}(x_{n}),...), \quad x=(x_{1},...,x_{n},...) \in l_{\infty}.
\end{equation}
Clearly, $\Phi_{j} \in \operatorname{LIP}_{2}(l_{\infty},l_{\infty})$. Finally, for each $j \in \mathbb{N}$, we put $u_{j}:=\pi_{j} \circ \Phi_{j} \circ  u$.

\textit{Step 5.} By the triangle inequality and \eqref{eqq.7.forgotten} we get
\begin{equation}
\notag
\|u -  \Phi_{j}\circ u\|_{L_{p}(\operatorname{X})} \le \|u\|_{L_{p}(\operatorname{X} \setminus E_{j})} + 2^{1-j}\mu(\Omega^{-}_{j}) + 2^{j+1}\mu(\Omega^{+}_{j}) \to 0, \quad j \to \infty.
\end{equation}
On the other hand, by the construction of $U_{j}$,
\begin{equation}
\notag
\varlimsup\limits_{j \to \infty}\|\Phi_{j}\circ u- u_{j}\|_{L_{p}(\operatorname{X})} \le \varlimsup\limits_{j \to \infty}2^{j+1}\mu(E_{j}\setminus U_{j}) = 0.
\end{equation}
As a result, we deduce
\begin{equation}
\label{eqq.7.2.forgotten}
\varlimsup\limits_{j \to \infty}\|u -  u_{j}\|_{L_{p}(\operatorname{X})} \le \varlimsup\limits_{j \to \infty}\| u - \Phi_{j}\circ u\|_{L_{p}(\operatorname{X})} + \varlimsup\limits_{j \to \infty}\|\Phi_{j}\circ u- u_{j}\|_{L_{p}(\operatorname{X})} = 0.
\end{equation}

\textit{Step 6.} Combining \eqref{eqq.7.2.forgotten} with Lemma \ref{Lm.finite_dimensional} and taking into account Remark \ref{Rem.Z_dependence}, we can construct a strictly increasing sequence $\{m_{j}\}$ in such a way that
\begin{equation}
\notag
\|u-\mathfrak{I}^{m_{j}}\circ u_{j}\|_{L_{p}(\operatorname{X})} \to 0, \quad j \to \infty.
\end{equation}
As a result, by Proposition \ref{Prop.weak_convergence} there is $\{j_{n}\}$ such that
$\{\mathfrak{I}^{m_{j_{n}}}\circ u_{j_{n}}\} \subset W_{p}^{1}(\operatorname{X},l_{\infty})$, and
$$
\|\mathfrak{I}^{m_{j_{n}}}\circ u_{j_{n}}\|_{W_{p}^{1}(\operatorname{X},l_{\infty})} > n.
$$

\textit{Step 7.}
Given $n \in \mathbb{N}$, it remains to put $F_{n}:=\mathfrak{I}^{m_{j_{n}}}\circ \pi_{j_{n}} \circ \Phi_{j_{n}} \circ \iota$ and
$K_{n}:=F_{n}(\operatorname{Y})$.

The proof is complete.
\end{proof}

\section{Proof of the second main result}
Given a real Banach space $V:=(V,\|\cdot\|)$, we say that a function $p:V \to [0,+\infty)$ is \textit{countably subadditive} if
\begin{equation}
\notag
p\Bigl(\sum\limits_{i=1}^{\infty}z_{i}\Bigr) \le \sum\limits_{i=1}^{\infty}p(z_{i})
\end{equation}
for any absolutely convergent series $\sum_{i=1}^{\infty}z_{i}$ in $V$. A function $p:V \to [0,+\infty)$ is said to be \textit{absolutely homogeneous} if $p(rx) = |r|p(x)$ for each $r \in \mathbb{R}$.
By the norm of $p$ we mean $\|p\|:= \sup_{\|z\| \le 1}p(z)$, as usual.

We recall an important proposition established in \cite{OT} which is nothing but a variant of the famous Banach--Steinhaus theorem. Note, however, that the proof of that
assertion given in \cite{OT} was fully constructive and did not use any Baire-type arguments. This fact enlarge the area of possible practical applications.
\begin{Prop}
\label{Prop.Banach_Steinhaus}
Let $V:=(V,\|\cdot\|)$ be a Banach space and let $\{p_{m}\}_{m=1}^{\infty}$ be a sequence
of nonnegative countably subadditive absolutely homogeneous functionals on $V$ such
that $\|p_{m}\| < +\infty$ for every $m \in \mathbb{N}$ but $\sup_{m \in \mathbb{N}}\|p_{m}\|=+\infty$.
Then there is $\underline{z} \in V$,
$\|\underline{z}\| \le 1$, such that $\sup_{m \in \mathbb{N}}p_{m}(\underline{z}) = +\infty$.
\end{Prop}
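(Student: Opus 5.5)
The plan is a constructive gliding-hump argument: we build $\underline{z}$ as an absolutely convergent series $\sum_{i} t_{i}z_{i}$ with $\|z_{i}\| \le 1$ and rapidly decreasing coefficients $t_{i} > 0$, $\sum_{i} t_{i} \le 1$. At each stage we choose an index $m_{i}$ so that $p_{m_{i}}(\underline{z})$ is forced to be large. The only tools are countable subadditivity and absolute homogeneity. Subadditivity gives the reverse triangle inequality $p(a+b) \geq p(a) - p(b)$ (apply it to $a = (a+b) + (-b)$ and use $p(-b) = p(b)$). It also gives $p(\sum_{j} w_{j}) \le \sum_{j} p(w_{j}) \le \|p\|\sum_{j}\|w_{j}\|$ for convergent tails.

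Inductive step: suppose $z_{1},\dots,z_{i-1}$, $t_{1},\dots,t_{i-1}$ and $m_{1},\dots,m_{i-1}$ have been chosen, and put $s_{i-1} := \sum_{j<i} t_{j}z_{j}$. Choose $t_{i} \le 2^{-i}$, small enough that also
\[
t_{i} \le \frac{t_{j}}{2^{i}\,(1+\|p_{m_{j}}\|)\, j} \quad \hbox{for all } j<i .
\]
This guarantees that the total tail after stage $j$ satisfies
\[
\|p_{m_{j}}\|\sum_{l>j} t_{l} \le 1 \le \tfrac{1}{2} t_{j}\cdot(\hbox{target}).
\]
Next pick $m_{i}$ with
\[
t_{i}\|p_{m_{i}}\| > 4\bigl(i + p_{m_{i}}(s_{i-1})\bigr).
\]
This is possible because $p_{m}(s_{i-1}) \le \|p_{m}\|\,\|s_{i-1}\|$, so if we also keep $\|s_{i-1}\|$ controlled, the condition reduces to $\|p_{m}\|(t_{i} - 4\|s_{i-1}\|\cdot\hbox{something}) > 4i$. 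This is the delicate point, discussed below. Finally choose $z_{i}$ with $\|z_{i}\| \le 1$ and $p_{m_{i}}(z_{i}) \geq \tfrac{1}{2}\|p_{m_{i}}\|$.

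Estimate at stage $i$: the reverse triangle inequality gives
\[
p_{m_{i}}(\underline{z}) \geq t_{i}p_{m_{i}}(z_{i}) - p_{m_{i}}(s_{i-1}) - \|p_{m_{i}}\|\sum_{l>i}t_{l} \geq \tfrac{1}{2}t_{i}\|p_{m_{i}}\| - p_{m_{i}}(s_{i-1}) - 1 ,
\]
and this tends to infinity by the choice of $m_{i}$.

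\textbf{Main obstacle.} The middle term $p_{m_{i}}(s_{i-1})$ may be comparable to $\|p_{m_{i}}\|$, because $s_{i-1}$ is a fixed vector while $m_{i}$ is chosen afterwards. The standard remedy is a dichotomy at each stage. If $\sup_{m} p_{m}(s_{i-1}) = +\infty$, we stop: $s_{i-1}$ (or $t\,s_{i-1}$ with $\|t\,s_{i-1}\| \le 1$, by homogeneity) is already the desired $\underline{z}$. Otherwise $A_{i-1} := \sup_{m} p_{m}(s_{i-1}) < \infty$, and we choose $m_{i}$ with $t_{i}\|p_{m_{i}}\| > 4(i + A_{i-1} + 1)$, which is possible since $\sup_{m}\|p_{m}\| = \infty$. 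With this choice the estimate gives $p_{m_{i}}(\underline{z}) \geq i$ for every $i$, so $\sup_{m} p_{m}(\underline{z}) = +\infty$. Moreover $\|\underline{z}\| \le \sum_{i} t_{i} \le 1$, and the series converges absolutely, so countable subadditivity is legitimately applied to the tails. No Baire argument is used, so the construction is explicit.
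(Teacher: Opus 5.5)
Your gliding-hump argument is correct. After the dichotomy on $A_{i-1}:=\sup_m p_m(s_{i-1})$, your choices give
$p_{m_i}(\underline{z})\ge \tfrac12 t_i\|p_{m_i}\|-A_{i-1}-1>2i+A_{i-1}+1\ge i$, with $\|\underline{z}\|\le\sum_i t_i\le 1$. The paper reproduces no proof of this proposition; it only refers to \cite{OT} for a constructive, Baire-free argument, and your explicit construction is of exactly that kind.

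You should delete the abortive first attempt at choosing $m_i$ with $p_{m_i}(s_{i-1})$ on both sides. If you want to avoid even the case distinction, use $2t\,p(z)=p(2tz)\le p(s+tz)+p(s-tz)$ and replace $z_i$ by $\pm z_i$. This gives $p_{m_i}(s_i)\ge \tfrac12 t_i\|p_{m_i}\|$ directly and makes $A_{i-1}$ unnecessary.
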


Combining Theorem \ref{Th.main_first} with Proposition \ref{Prop.maximal_function} we immediately get the following assertion.
\begin{Ca}
\label{Ca.main}
For each $p \in (1,\infty)$ and each $\operatorname{X}=(\operatorname{X},\rho,\mu) \in \mathcal{A}_{p}$, there exists a constant $C_{3} > 0$ such that the following holds.
Given a complete pointed metric space $\operatorname{Y}=(\operatorname{Y},\operatorname{d},\underline{y})$, for each $u \in W_{p}^{1}(\operatorname{X},\operatorname{Y})$
there is a function $\underline{h} \in \operatorname{LIP}_{1}(\operatorname{Y})$ such that
\begin{equation}
\||Du|_{p}\|_{L_{p}(\operatorname{X})} \le C_{3} \||D(\underline{h} \circ u)|_{p}\|_{L_{p}(\operatorname{X})}.
\end{equation}
\end{Ca}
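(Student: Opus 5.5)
We plan to derive Corollary~\ref{Ca.main} directly from Theorem~\ref{Th.main_first} and the maximal inequality of Proposition~\ref{Prop.maximal_function}. Fix $u \in W_{p}^{1}(\operatorname{X},\operatorname{Y})$ and let $\operatorname{C}_{1},\operatorname{C}_{2}$, $q\in(1,p)$ be given by Theorem~\ref{Th.main_first}; recall these do not depend on $\operatorname{Y}$ or $u$. If $|Du|_{p}=0$ $\mu$-a.e., any $\underline{h}$ works, so assume $\||Du|_{p}\|_{L_{p}}>0$. Fix once and for all $r=1/2$, and let $C=C(q,1/2)$ be the constant from Proposition~\ref{Prop.maximal_function}.

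First, we choose $k$ large enough that the layers $\mathcal{D}_{p,j}[u]$ with $j\in\{-k-1,\dots,k\}$ carry at least half of the energy:
\begin{equation}
\notag
\sum_{j=-k-1}^{k}\int_{\mathcal{D}_{p,j}[u]}|Du|_{p}^{p}\,d\mu \ge \frac12\int_{\operatorname{X}}|Du|_{p}^{p}\,d\mu .
\end{equation}
This is possible since $|Du|_{p}\in L_{p}$ and $\operatorname{X}=\{|Du|_{p}=0\}\cup\bigcup_{j}\mathcal{D}_{p,j}[u]$ up to a null set, by monotone convergence. Next we apply Theorem~\ref{Th.main_first} with this $k$ and $r=1/2$, obtaining $\mathcal{S}:=\mathcal{S}_{k,1/2}$ and $\underline{h}:=\underline{h}_{k,1/2}\in\operatorname{LIP}_{1}(\operatorname{Y})$.

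On each layer $|Du|_{p}^{p}$ is comparable to $2^{jp}$ within a factor $2^{p}$. Hence \eqref{eqq.main_set} gives
\begin{equation}
\notag
\int_{\mathcal{S}\cap\mathcal{D}_{p,j}[u]}|Du|_{p}^{p}\,d\mu \ge 2^{jp}\operatorname{C}_{1}\mu(\mathcal{D}_{p,j}[u]) \ge 2^{-p}\operatorname{C}_{1}\int_{\mathcal{D}_{p,j}[u]}|Du|_{p}^{p}\,d\mu .
\end{equation}
Summing over $j$ yields $\int_{\mathcal{S}}|Du|_{p}^{p}\,d\mu\ge 2^{-p-1}\operatorname{C}_{1}\||Du|_{p}\|_{L_{p}}^{p}$. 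We then combine \eqref{eqq.main_Lip} with Proposition~\ref{Prop.maximal_function}; this applies since $q<p$ and $|D(\underline{h}\circ u)|_{p}\le|Du|_{p}\in L_{p}$. We obtain
\begin{equation}
\notag
\int_{\mathcal{S}}|Du|_{p}^{p}\,d\mu\le \operatorname{C}_{2}^{p}\|\operatorname{M}^{\le 1/2}_{q}[|D(\underline{h}\circ u)|_{p}]\|_{L_{p}}^{p}\le (\operatorname{C}_{2}C)^{p}\||D(\underline{h}\circ u)|_{p}\|_{L_{p}}^{p}.
\end{equation}
Therefore $C_{3}:=2^{(p+1)/p}\operatorname{C}_{1}^{-1/p}\operatorname{C}_{2}C$ works, and it is independent of $\operatorname{Y}$ and $u$.

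The only delicate point is the layer-wise comparison. Theorem~\ref{Th.main_first} controls only the measure of $\mathcal{S}$ inside the dyadic level sets, not the energy, so we need the dyadic structure of $\mathcal{D}_{p,j}[u]$ to convert measure into energy. We also need to truncate to finitely many layers, which is why $k$ must be chosen depending on $u$ before invoking the theorem. This is harmless, since $C_{1},C_{2},q$ are uniform in $k$.
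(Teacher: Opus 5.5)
Your proposal is correct and follows the same route as the paper, which only says that the corollary follows by combining Theorem~\ref{Th.main_first} with Proposition~\ref{Prop.maximal_function}. You supply the omitted details correctly: choosing $k$ so the layers $j\in\{-k-1,\dots,k\}$ carry half of $\||Du|_{p}\|_{L_p}^p$, converting the layer-wise measure bound into an energy bound at the cost of $2^{-p}$, and then applying the maximal inequality with $r=1/2$.
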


Now we are ready to prove the \textit{second main} result of this paper.

\textit{proof of Theorem \ref{Th.main_second}.} Assume on the contrary that $ u \in L_{p}(\operatorname{X},\operatorname{Y}) \setminus W_{p}^{1}(\operatorname{X},\operatorname{Y})$.
We apply Theorem \ref{Th.finite_dimensional_approximation} and identify, for each $n \in \mathbb{N}$, the space $(\mathbb{R}^{N_{n}},\|\cdot\|_{\infty})$ with its image $\iota(\mathbb{R}^{N_{n}})$
under the Fr\'echet imbedding into $l_{\infty}$.

Given $n \in \mathbb{N}$, we define a countably subadditive absolutely homogeneous functional $p_{n}:\operatorname{LIP}^{\rm str}(\operatorname{Y}) \to [0,+\infty)$ by letting
\begin{equation}
\notag
p_{n}[h]:=\|h \circ F_{n} \circ u\|_{W_{p}^{1}(\operatorname{X})}, \quad h \in \operatorname{LIP}^{\rm str}(\operatorname{Y}).
\end{equation}

Then, using Corollary \ref{Ca.main}, we build a strictly increasing sequence $\{k_{n}\} \subset \mathbb{N}$ and sequence
$\{\underline{h}_{k_{n}}\} \subset \operatorname{LIP}_{1}(\operatorname{Y})$ such that
\begin{equation}
p_{n}[\underline{h}_{k_{n}}]=\|\underline{h}_{k_{n}}\circ F_{n}\circ u\|_{W_{p}^{1}(\operatorname{X})} > C n \quad \hbox{for all $n \in \mathbb{N}$}.
\end{equation}
Finally, for each $n \in \mathbb{N}$, we put $h_{n}:=\frac{1}{4}\underline{h}_{k_{n}}\circ F_{n}$. This gives
$\sup_{n \in \mathbb{N}}\|h_{n}\|_{\operatorname{LIP}^{\rm str}(\operatorname{Y})} \le 1$.

Now we recall notation \eqref{eqq.c_1_norm} and apply Proposition \ref{Prop.Banach_Steinhaus} with $V=\operatorname{LIP}^{\rm str}(\operatorname{Y})$. This leads
to the existence of $h \in \operatorname{LIP}_{1}(\operatorname{Y})$ such that
\begin{equation}
\label{eqq.9.3}
\sup\limits_{n \in \mathbb{N}}\|h \circ F_{n} \circ u\|_{W_{p}^{1}(\operatorname{X})} \geq \sup\limits_{n \in \mathbb{N}}p_{n}[h] = +\infty.
\end{equation}

On the other hand, $2\overline{\operatorname{e}}_{p}[h \circ u] \geq \overline{\operatorname{e}}_{p}[h \circ F_{n} \circ u]$ for all $n \in \mathbb{N}$.
Hence, taking into account Lemma \ref{Lm.auxiliary_norm}, we deduce
\begin{equation}
\notag
\|\overline{\operatorname{e}}_{p}[h \circ u]\|_{L_{p}(\operatorname{X})} \geq \frac{1}{2}\|\overline{\operatorname{e}}_{p}[h \circ F_{n} \circ u]\|_{L_{p}(\operatorname{X})} \geq \frac{C}{2}\sup\limits_{n \in \mathbb{N}}\|h \circ F_{n} \circ u\|_{W_{p}^{1}(\operatorname{X})} = +\infty.
\end{equation}
As a result, by Proposition \ref{Prop.different_approaches} we get $h \circ u \notin W_{p}^{1}(\operatorname{X})$. We arrive at a contradiction and complete the proof.
\hfill$\Box$





%
%
\end{document}